\documentclass[platex]{amsart}
\usepackage{amssymb,amsmath,amsthm,empheq,comment,color}
\usepackage[utf8]{inputenc}
\usepackage{enumerate}

\usepackage[top=30truemm,bottom=30truemm,left=20truemm,right=20truemm]{geometry}

\theoremstyle{definition}
\newtheorem{theorem}{Theorem}[section]
\newtheorem{proposition}[theorem]{Proposition}

\newtheorem{lemma}[theorem]{Lemma}
\newtheorem{corollary}[theorem]{Corollary}

\newtheorem*{remark*}{Remark}

\DeclareMathOperator{\re}{Re}
\DeclareMathOperator{\im}{Im}

\DeclareMathOperator{\Mod}{Mod}

\DeclareMathOperator{\app}{app}

\title[Minimal mass blow-up solutions for NLS with a singlar potential]{Minimal mass blow-up solutions for nonlinear Schr\"{o}dinger equations with a singular potential}
\author[N. Matsui]{Naoki Matsui}
\date{\today}
\address[N. Mastui]{Department of Mathematics\\ Tokyo University of Science\\ 1-3 Kagurazaka, Shinjuku-ku, Tokyo 162-8601, Japan}
\email[N. Matsui]{1120703@ed.tus.ac.jp}

\keywords{nonlinear Schr\"{o}dinger equation, critical exponent, critical mass, minimal mass blow-up, blow-up rate, potential type.}
\subjclass[2010]{35Q55}

\begin{document}
\maketitle

\begin{abstract}
We consider the following nonlinear Schr\"{o}dinger equation with an inverse potential:
\[
i\frac{\partial u}{\partial t}+\Delta u+|u|^{\frac{4}{N}}u\pm\frac{1}{|x|^{2\sigma}}\log|x|u=0
\]
in $\mathbb{R}^N$. From the classical argument, the solution with subcritical mass ($\|u\|_2<\|Q\|_2$) is global and bounded in $H^1(\mathbb{R}^N)$. Here, $Q$ is the ground state of the mass-critical problem. Therefore, we are interested in the existence and behaviour of blow-up solutions for the threshold ($\left\|u_0\right\|_2=\left\|Q\right\|_2$).
\end{abstract}

\section{Introduction}
We consider the following nonlinear Schr\"{o}dinger equation with an inverse-log potential:
\begin{align}
\label{NLS}
i\frac{\partial u}{\partial t}+\Delta u+|u|^{\frac{4}{N}}u\pm\frac{1}{|x|^{2\sigma}}\log|x|u=0,\quad (t,x)\in\mathbb{R}\times\mathbb{R}^N,
\end{align}
where $N\in\mathbb{N}$ and $\sigma\in\mathbb{R}$. It is well known that if
\begin{align}
\label{index1}
0<\sigma<\min\left\{\frac{N}{2},1\right\},
\end{align}
then \eqref{NLS} is locally well-posed in $H^1(\mathbb{R}^N)$ from \cite[Proposition 3.2.2, Proposition 3.2.5, Theorem 3.3.9, and Proposition 4.2.3]{CSSE}. This means that for any initial value $u_0\in H^1(\mathbb{R}^N)$, there exists a unique maximal solution $u\in C((T_*,T^*),H^1(\mathbb{R}^N))\cap C^1((T_*,T^*),H^{-1}(\mathbb{R}^N))$ for \eqref{NLS} with $u(0)=u_0$. Moreover, the mass (i.e., $L^2$-norm) and energy $E$ of the solution $u$  are conserved by the flow, where 
\[
E(u):=\frac{1}{2}\left\|\nabla u\right\|_2^2-\frac{1}{2+\frac{4}{N}}\left\|u\right\|_{2+\frac{4}{N}}^{2+\frac{4}{N}}\mp\frac{1}{2}\int_{\mathbb{R}^N}\frac{1}{|x|^{2\sigma}}\log|x||u(x)|^2dx.
\]
Furthermore, the blow-up alternative holds:
\[
T^*<\infty\quad \mbox{implies}\quad \lim_{t\nearrow T^*}\left\|\nabla u(t)\right\|_2=\infty.
\]

We define $\Sigma^k$ by
\[
\Sigma^k:=\left\{u\in H^k\left(\mathbb{R}^N\right)\ \middle|\ |x|^ku\in L^2\left(\mathbb{R}^N\right)\right\},\quad \|u\|_{\Sigma^k}^2:=\|u\|_{H^k}^2+\||x|^ku\|_2^2.
\]
Particularly, $\Sigma^1$ is called the virial space. If $u_0\in \Sigma^1$, then the solution $u$ for \eqref{NLS} with $u(0)=u_0$ belongs to $C((T_*,T^*),\Sigma^1)$ from \cite[Lemma 6.5.2]{CSSE}.

Moreover, we consider the case
\begin{align}
\label{index2}
0<\sigma<\min\left\{\frac{N}{4},1\right\}.
\end{align}
If $u_0\in H^2(\mathbb{R}^N)$, then the solution $u$ for \eqref{NLS} with $u(0)=u_0$ belongs to $C((T_*,T^*),H^2(\mathbb{R}^N))\cap C^1((T_*,T^*),L^2(\mathbb{R}^N))$ and $|x|\nabla u\in C((T_*,T^*),L^2(\mathbb{R}^N))$ from \cite[Theorem 5.3.1]{CSSE}. Furthermore, if $u_0\in \Sigma^2$, then the solution $u$ for \eqref{NLS} with $u(0)=u_0$ belongs to $C((T_*,T^*),\Sigma^2)\cap C^1((T_*,T^*),L^2(\mathbb{R}^N))$ and $|x|\nabla u\in C((T_*,T^*),L^2(\mathbb{R}^N))$ from the same proof as in \cite[Lemma 6.5.2]{CSSE}.

\subsection{Critical problem}
Firstly, we describe the results regarding the mass-critical problem:
\begin{align}
\label{CNLS}
i\frac{\partial u}{\partial t}+\Delta u+|u|^{\frac{4}{N}}u=0,\quad (t,x)\in\mathbb{R}\times\mathbb{R}^N,
\end{align}
In particular, \eqref{NLS} with $\sigma=0$ is reduced to \eqref{CNLS}.

It is well known (\cite{BLGS,KGS,WGS}) that there exists a unique classical solution $Q$ for
\[
-\Delta Q+Q-\left|Q\right|^{\frac{4}{N}}Q=0,\quad Q\in H^1(\mathbb{R}^N),\quad Q>0,\quad Q\mathrm{\ is\ radial},
\]
which is called the ground state. If $\|u\|_2=\|Q\|_2$ ($\|u\|_2<\|Q\|_2$, $\|u\|_2>\|Q\|_2$), we say that $u$ has the \textit{critical mass} (\textit{subcritical mass}, \textit{supercritical mass}, respectively).

We note that $E_{\mathrm{crit}}(Q)=0$, where $E_{\mathrm{crit}}$ is the energy with respect to \eqref{CNLS}. Moreover, the ground state $Q$ attains the best constant in the Gagliardo-Nirenberg inequality
\[
\left\|v\right\|_{2+\frac{4}{N}}^{2+\frac{4}{N}}\leq\left(1+\frac{2}{N}\right)\left(\frac{\left\|v\right\|_2}{\left\|Q\right\|_2}\right)^{\frac{4}{N}}\left\|\nabla v\right\|_2^2\quad\mbox{for }v\in H^1(\mathbb{R}^N).
\]
Therefore, for all $v\in H^1(\mathbb{R}^N)$,
\[
E_{\mathrm{crit}}(v)\geq \frac{1}{2}\left\|\nabla v\right\|_2^2\left(1-\left(\frac{\left\|v\right\|_2}{\left\|Q\right\|_2}\right)^{\frac{4}{N}}\right)
\]
holds. This inequality and the mass and energy conservations imply that any subcritical mass solution for \eqref{CNLS} is global and bounded in $H^1(\mathbb{R}^N)$.

Regarding the critical mass case, we apply the pseudo-conformal transformation
\[
u(t,x)\ \mapsto\ \frac{1}{\left|t\right|^\frac{N}{2}}u\left(-\frac{1}{t},\pm\frac{x}{t}\right)e^{i\frac{\left|x\right|^2}{4t}}
\]
to the solitary wave solution $u(t,x):=Q(x)e^{it}$. Then we obtain
\[
S(t,x):=\frac{1}{\left|t\right|^\frac{N}{2}}Q\left(\frac{x}{t}\right)e^{-\frac{i}{t}}e^{i\frac{\left|x\right|^2}{4t}},
\]
which is also a solution for \eqref{CNLS} and satisfies
\[
\left\|S(t)\right\|_2=\left\|Q\right\|_2,\quad \left\|\nabla S(t)\right\|_2\sim\frac{1}{\left|t\right|}\quad (t\nearrow 0).
\]
Namely, $S$ is a minimal mass blow-up solution for \eqref{CNLS}. Moreover, $S$ is the only finite time blow-up solution for \eqref{CNLS} with critical mass, up to the symmetries of the flow (see \cite{MMMB}).

Regarding the supercritical mass case, there exists a solution $u$ for \eqref{CNLS} such that
\[
\left\|\nabla u(t)\right\|_2\sim\sqrt{\frac{\log\bigl|\log\left|T^*-t\right|\bigr|}{T^*-t}}\quad (t\nearrow T^*)
\]
(see \cite{MRUPB,MRUDB}).

\subsection{Previous results}
Le Coz, Martel, and Rapha\"{e}l \cite{LMR} based on the methodology of \cite{RSEU} obtains the following results for
\begin{align}
\label{DPNLS}
i\frac{\partial u}{\partial t}+\Delta u+|u|^{\frac{4}{N}}u\pm|u|^{p-1}u=0,\quad (t,x)\in\mathbb{R}\times\mathbb{R}^N.
\end{align}

\begin{theorem}[\cite{LMR,NDP}]
\label{theorem:LMR}
Let $1<p<1+\frac{4}{N}$, and $\pm=+$. Then for any energy level $E_0\in\mathbb{R}$, there exist $t_0<0$ and a radially symmetric initial value $u_0\in H^1(\mathbb{R}^N)$ with
\[
\|u_0\|_2=\|Q\|_2,\quad E(u_0)=E_0
\]
such that the corresponding solution $u$ for \eqref{DPNLS} with $u(t_0)=u_0$ blows up at $t=0$ with a blow-up rate of
\[
\|\nabla u(t)\|_2=\frac{C(p)+o_{t\nearrow 0}(t)}{|t|^{\sigma}},
\]
where $\sigma=\frac{4}{4+N(p-1)}$ and $C(p)>0$.
\end{theorem}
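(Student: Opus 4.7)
The plan is to follow the Rapha\"{e}l--Szeftel / Le Coz--Martel--Rapha\"{e}l scheme: construct an approximate minimal-mass blow-up profile that is a correction of the pseudo-conformal ground state $S$, then realize an exact solution close to this profile by solving backwards in time from a sequence $t_n \nearrow 0$ and passing to the limit via compactness. First I would build a family of profiles $P_b$ depending on a small scaling parameter $b > 0$, of the form $P_b = Q + b^2 R_1 + b^4 R_2 + \cdots$, chosen so that $P_b$ solves the associated renormalized elliptic equation up to a remainder of higher order in $b$ and in the rescaled perturbation. The ansatz
\[
u(t,x) = \frac{1}{\lambda(t)^{N/2}} P_{b(t)}\!\left(\frac{x}{\lambda(t)}\right) e^{i\gamma(t) - i b(t) |x|^2 / (4 \lambda(t)^2)} + \varepsilon(t,x)
\]
then introduces modulation parameters $(\lambda, b, \gamma)$ and a small error $\varepsilon$.

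After the $L^2$-critical rescaling, the subcritical nonlinearity $\pm |u|^{p-1}u$ gains a factor $\lambda^{\alpha}$ with $\alpha = (4 - N(p-1))/2 > 0$, so it acts as a genuine lower-order forcing on the renormalized equation. The formal dynamical laws $-\lambda \lambda_t \sim b$ and $b_s \sim c\, \lambda^{\alpha}$ (with rescaled time $ds/dt = \lambda^{-2}$) integrate to $\lambda(t) \sim C(p)|t|^{\sigma}$ with $\sigma = 4/(4 + N(p-1))$, which is exactly the claimed blow-up rate since $\|\nabla u(t)\|_2 \sim \|\nabla Q\|_2 / \lambda(t)$. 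The prescription $E(u_0) = E_0$ is achieved by tuning a free parameter in the initial data (for example, in the initial values of the modulation parameters) that shifts the conserved energy without affecting the leading-order dynamics.

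The main technical core is a modulation analysis coupled with an energy-type bootstrap on $\varepsilon$. I would impose standard orthogonality conditions on $\varepsilon$ against the generators of the $L^2$-critical symmetries (typically $iP_b$, $\nabla P_b$, $|x|^2 P_b$, $\partial_b P_b$) to obtain $C^1$ laws for $(\lambda, b, \gamma)$ that match the formal ODE system up to terms controlled by $\varepsilon$. The heart of the proof, and the real obstacle, is closing the bootstrap: use a mixed virial/energy Lyapunov functional built from the coercivity of the linearized operator around $Q$ (modulo its explicit null space given by the symmetries), bound its time derivative by terms that are either absorbed by the bootstrap hypotheses or small thanks to $\alpha > 0$, and apply mass/energy conservation to close the argument. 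With the bootstrap in hand one performs the backward construction: take $u_n$ to be the solution on $[t_0, t_n]$ with $u_n(t_n)$ equal to the modulated profile carrying the prescribed energy $E_0$, obtain uniform $H^1$ bounds and tightness independent of $n$, extract a weak-$\ast$ $H^1$ limit, upgrade to strong convergence using the critical-mass Gagliardo--Nirenberg inequality (which rules out loss of compactness), and verify that the limit inherits the required mass, energy, and blow-up rate.
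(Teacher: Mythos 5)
Your overall scheme is exactly the one used in the cited works \cite{LMR,NDP} and mirrored in this paper's proof of Theorem \ref{theorem:EMBS}: profile construction, modulation under orthogonality conditions, a mixed energy/virial Lyapunov functional, a bootstrap, and a backward-in-time compactness argument; the exponent bookkeeping ($\alpha=(4-N(p-1))/2$, $\lambda\sim|t|^{4/(4+N(p-1))}$) is also correct. One point needs repair before the bootstrap can close: the profile cannot be expanded in powers of $b$ alone. Along the blow-up trajectory $b^2\approx\lambda^\alpha\approx s^{-2}$, so the term $\lambda^\alpha|v|^{p-1}v$ left unabsorbed by an ansatz $P_b=Q+b^2R_1+\cdots$ contributes an error of size $\lambda^\alpha$ to the renormalized equation, which is far too large compared with the required smallness $\|\varepsilon\|_{H^1}\lesssim s^{-K}$; the profile must carry corrections indexed jointly by $b^{2j}$ and $\lambda^{k\alpha}$, with $\lambda$ and $b$ treated as independent modulation parameters (this is the analogue, without the logarithms, of the sum over $\Sigma_{K+K'}$ in Proposition \ref{theorem:constprof}). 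This joint expansion is also where the hypothesis $\pm=+$ enters, and your sketch never identifies it: the leading $\lambda^\alpha$-correction produces the modified law $b_s+b^2+c\lambda^\alpha=0$ with the sign of $c$ determined by the sign of the perturbation, and only for $\pm=+$ does this ODE system admit a solution with $\lambda$ vanishing in finite time; the opposite sign yields Theorem \ref{theorem:LMR2} (global existence), so a proof that does not use the sign cannot be complete. Finally, the strong $L^2$ convergence of $u_n(t_0)$ comes from the uniform bound in the virial space $\Sigma^1$ (compactness of the embedding into $L^2$), not from the critical-mass Gagliardo--Nirenberg inequality.
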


\begin{theorem}[\cite{LMR}]
\label{theorem:LMR2}
Let $1<p<1+\frac{4}{N}$, and $\pm=-$. If an initial value has critical mass, then the corresponding solution for \eqref{DPNLS} with $u(0)=u_0$ is global and bounded in $H^1(\mathbb{R}^N)$.
\end{theorem}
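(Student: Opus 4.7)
The plan is to exploit the favourable sign of the defocusing perturbation through an a priori $L^{p+1}$ bound, and then rule out concentration by invoking the rigidity of the sharp Gagliardo-Nirenberg inequality. Writing the energy as $E(u) = E_{\mathrm{crit}}(u) + \frac{1}{p+1}\|u\|_{p+1}^{p+1}$, and using $E_{\mathrm{crit}}(v) \geq 0$ for any critical-mass $v$ (by the sharp Gagliardo-Nirenberg inequality recalled in the introduction), the mass and energy conservations immediately give the uniform bound
\[
\frac{1}{p+1}\|u(t)\|_{p+1}^{p+1} \leq E(u(t)) = E(u_0)
\]
on the maximal interval $(T_*,T^*)$. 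This is the only place where the sign $\pm=-$ is used.

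Arguing by contradiction, suppose $\|\nabla u(t_n)\|_2 \to \infty$ along some sequence $t_n \in (T_*,T^*)$, set $\lambda_n := \|\nabla Q\|_2 / \|\nabla u(t_n)\|_2 \to 0^+$, and introduce the rescaled profiles $v_n(x) := \lambda_n^{N/2} u(t_n, \lambda_n x)$. By construction $\|v_n\|_2 = \|Q\|_2$ and $\|\nabla v_n\|_2 = \|\nabla Q\|_2$, while the $L^2$-invariant scaling of $E_{\mathrm{crit}}$ gives
\[
E_{\mathrm{crit}}(v_n) = \lambda_n^2 \, E_{\mathrm{crit}}(u(t_n)) \leq \lambda_n^2 \, E(u_0) \longrightarrow 0.
\]
Thus $\{v_n\}$ saturates the sharp Gagliardo-Nirenberg inequality at mass $\|Q\|_2$.

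By the classical rigidity of Gagliardo-Nirenberg minimisers (a Cazenave-Lions type concentration-compactness argument, or one step of a profile decomposition), there exist $x_n \in \mathbb{R}^N$ and $\gamma_n \in \mathbb{R}$ such that, along a subsequence, $e^{i\gamma_n} v_n(\cdot + x_n) \to Q$ strongly in $H^1(\mathbb{R}^N)$. Strong convergence yields $\|v_n\|_{p+1}^{p+1} \to \|Q\|_{p+1}^{p+1} > 0$, and undoing the rescaling,
\[
\|u(t_n)\|_{p+1}^{p+1} = \lambda_n^{-\frac{N(p-1)}{2}} \|v_n\|_{p+1}^{p+1} \longrightarrow +\infty
\]
since $p > 1$. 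This contradicts the a priori $L^{p+1}$ bound of the first step, so $\|\nabla u\|_2$ must remain bounded on $(T_*,T^*)$, and the blow-up alternative then gives global existence and $H^1$-boundedness.

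The only non-routine step is the rigidity above: showing that any critical-mass sequence with $E_{\mathrm{crit}} \to 0$ converges, modulo phase and translation, to the ground state $Q$. I would either cite this fact directly or execute one Brezis-Lieb / profile decomposition step and invoke uniqueness of $Q$ as the Gagliardo-Nirenberg extremiser to force a single non-vanishing bubble equal to $Q$ with vanishing residue. Once this compactness is in hand, the scaling identities and the final contradiction are mechanical.
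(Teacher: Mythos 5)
Your proof is correct and is essentially the argument of Le Coz--Martel--Rapha\"el, which this paper only cites rather than reproves: the defocusing sign yields the a priori $L^{p+1}$ bound via $E=E_{\mathrm{crit}}+\frac{1}{p+1}\|\cdot\|_{p+1}^{p+1}$ and sharp Gagliardo--Nirenberg, and the variational rigidity of $Q$ (Weinstein/Hmidi--Keraani) then turns any gradient blow-up into $L^{p+1}$ blow-up at rate $\lambda_n^{-N(p-1)/2}$, a contradiction. Note that the translation invariance of the perturbation $|u|^{p-1}u$ is exactly why no radial assumption is needed here, in contrast to Theorems \ref{theorem:NI2} and \ref{theorem:NEMBS}, where the same scheme requires radial symmetry to pin the concentration point at the singularity of the potential.
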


\cite{NI} obtains the  following results for
\begin{align}
\label{IPNLS}
i\frac{\partial u}{\partial t}+\Delta u+|u|^{\frac{4}{N}}u\pm\frac{1}{|x|^{2\sigma}}u=0,\quad (t,x)\in\mathbb{R}\times\mathbb{R}^N.
\end{align}

\begin{theorem}[\cite{NI}]
\label{theorem:NI1}
Assume \eqref{index2}. Then for any energy level $E_0\in\mathbb{R}$, there exist $t_0<0$ and a radially symmetric initial value $u_0\in H^1(\mathbb{R}^N)$ with
\[
\|u_0\|_2=\|Q\|_2,\quad E(u_0)=E_0
\]
such that the corresponding solution $u$ for \eqref{IPNLS} with $\pm=+$ and $u(t_0)=u_0$ blows up at $t=0$. Moreover,
\[
\left\|u(t)-\frac{1}{\lambda(t)^\frac{N}{2}}P\left(t,\frac{x}{\lambda(t)}\right)e^{-i\frac{b(t)}{4}\frac{|x|^2}{\lambda(t)^2}+i\gamma(t)}\right\|_{\Sigma^1}\rightarrow 0\quad (t\nearrow 0)
\]
holds for some blow-up profile $P$ and $C^1$ functions $\lambda:(t_0,0)\rightarrow(0,\infty)$ and $b,\gamma:(t_0,0)\rightarrow\mathbb{R}$ such that
\begin{align*}
P(t)&\rightarrow Q\quad\mbox{in}\ H^1(\mathbb{R}^N),&\lambda(t)&=C_1(\sigma)|t|^{\frac{1}{1+\sigma}}\left(1+o(1)\right),\\
b(t)&=C_2(\sigma)|t|^{\frac{1-\sigma}{1+\sigma}}\left(1+o(1)\right),& \gamma(t)^{-1}&=O\left(|t|^{\frac{1-\sigma}{1+\sigma}}\right)
\end{align*}
as $t\nearrow 0$.
\end{theorem}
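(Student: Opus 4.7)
The plan is to follow the framework of Rapha\"{e}l--Szeftel and Le Coz--Martel--Rapha\"{e}l, adapted to the singular inverse-power perturbation $|x|^{-2\sigma}$. The starting point is the modulated ansatz
\[
u(t,x) \;\approx\; \frac{1}{\lambda(t)^{N/2}}\, P\!\left(t,\frac{x}{\lambda(t)}\right) e^{-i\frac{b(t)}{4}\frac{|x|^2}{\lambda(t)^2}+i\gamma(t)},
\]
with $P$ close to $Q$, which I will insert into \eqref{IPNLS} in rescaled variables $y=x/\lambda$ and $ds/dt = 1/\lambda^2$. Because the potential transforms as $|x|^{-2\sigma} = \lambda^{-2\sigma}|y|^{-2\sigma}$, it enters the inner equation as a perturbation of size $\lambda^{2-2\sigma}$ relative to the ground-state equation $-\Delta Q+Q-Q^{1+4/N}=0$. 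Balancing this against the modulation law $b_s+b^2$ that arises from the pseudo-conformal phase then formally forces $b_s+b^2 \sim c_\sigma \lambda^{2-2\sigma}$ and $\lambda_s/\lambda=-b$, whose integration gives exactly the predicted scalings $\lambda \sim |t|^{1/(1+\sigma)}$ and $b \sim |t|^{(1-\sigma)/(1+\sigma)}$.

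Next I would build a high-order approximate profile
\[
P \;=\; Q + bT_1 + b^2 T_2 + \lambda^{2-2\sigma} R_1 + \cdots,
\]
by solving a cascade of linear equations driven by the operators $L_\pm$ (the real/imaginary linearization around $Q$). At each step the source term lies in the orthogonal complement of the kernels $\ker L_+ = \Span(\partial_j Q)$ and $\ker L_- = \Span(Q)$, so the Fredholm alternative yields a unique corrector in an appropriate weighted space. The potential-driven correctors $R_k$ must be shown to behave compatibly with $|y|^{-2\sigma}$ at the origin and to lie in $\Sigma^1$ (respectively $\Sigma^2$) at infinity; this is where Hardy-type inequalities and the restriction $\sigma<\min\{N/4,1\}$ become essential. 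I would then estimate the residual $\Psi:=(i\partial_t+\Delta+|\cdot|^{4/N}|\cdot|+\frac{1}{|x|^{2\sigma}})V_{\mathrm{app}}$ and show that it is controllably small in $\Sigma^1\cap \Sigma^2$ on $[t_0,0)$.

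For the rigorous construction I would use the backward-in-time compactness method: pick $t_n\nearrow 0$, prescribe $u_n(t_n)=V_{\mathrm{app}}(t_n)$, and solve \eqref{IPNLS} backward. Writing $u_n=(V_{\mathrm{app}}+\varepsilon_n)_{\lambda_n,b_n,\gamma_n}$ in modulated coordinates, I would impose orthogonality conditions fixing $(\lambda_n,b_n,\gamma_n)$, and run a bootstrap based on a mixed energy--virial--Morawetz functional of Rapha\"{e}l--Szeftel type. The coercivity $\langle L_+\varepsilon_1,\varepsilon_1\rangle+\langle L_-\varepsilon_2,\varepsilon_2\rangle \gtrsim \|\varepsilon\|_{H^1}^2$ on the orthogonal complement, combined with a Hardy absorption of the singular potential term $\int \lambda^{-2\sigma}|y|^{-2\sigma}|\varepsilon|^2\,dy$, lets one close the bootstrap uniformly in $n$ on $[t_0,t_n]$. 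Passing to a weak $\Sigma^1$-limit and using local well-posedness backward produces a solution on $(t_0,0)$ with the desired modulation laws. Finally, to attain the prescribed energy $E_0\in\mathbb{R}$, I would enlarge the construction by a one-parameter family, e.g.\ by adding a tail $\alpha\Psi_0$ to the initial profile with $\langle \Psi_0,\Lambda Q\rangle\neq 0$, and use a continuity/degree argument on $\alpha$ to hit any value of $E$.

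The main obstacle will be managing the singular potential in every step of the cascade and of the coercivity argument. Unlike the smooth subcritical correction $|u|^{p-1}u$ of Theorem~\ref{theorem:LMR}, the source $|y|^{-2\sigma}Q$ is singular at $y=0$, so the correctors $R_k$ have to be constructed in weighted spaces and their $\Sigma^2$-estimates are delicate precisely when $\sigma$ approaches the endpoint $N/4$. Closing the modulated bootstrap then hinges on showing that the Hardy-type quadratic form $\int|y|^{-2\sigma}|\varepsilon|^2\,dy$ is controlled by the coercivity from $L_\pm$ with a constant small enough to be absorbed by the leading energy; the restriction \eqref{index2} in the hypotheses is exactly what makes this absorption available.
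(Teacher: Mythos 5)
Your proposal follows essentially the same Rapha\"{e}l--Szeftel / Le Coz--Martel--Rapha\"{e}l strategy that the cited source \cite{NI} (and the present paper, for the log-modified potential) actually uses: a profile cascade solved through $L_\pm$ with solvability conditions fixing the $\beta$-coefficients, modulation with orthogonality conditions, a mixed energy--virial functional closed by $L_\pm$-coercivity plus Hardy control of $|y|^{-2\sigma}$, and backward compactness along $t_n\nearrow 0$, and your formal balance $b_s+b^2\sim c_\sigma\lambda^{2-2\sigma}$ correctly reproduces the rates $\lambda\sim|t|^{1/(1+\sigma)}$ and $b\sim|t|^{(1-\sigma)/(1+\sigma)}$. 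The only real deviation is minor: the prescribed energy $E_0$ is attained in the source by tuning $(\lambda_1,b_1)$ through the energy identity for the profile (as in Lemma \ref{paraini}) rather than by a degree argument on an added tail, but both routes are viable.
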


On the other hand, the following result holds in \eqref{IPNLS} with $\pm=-$.

\begin{theorem}[\cite{NI}]
\label{theorem:NI2}
Assume $N\geq 2$ and \eqref{index1}. If $u_0\in H^1_\mathrm{rad}(\mathbb{R}^N)$ such that $\|u_0\|_2=\|Q\|_2$, the corresponding solution $u$ for \eqref{IPNLS} with $\pm=-$ and $u(0)=u_0$ is global and bounded in $H^1(\mathbb{R}^N)$.
\end{theorem}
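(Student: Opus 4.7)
The plan is a contradiction argument based on the blow-up alternative, energy conservation, and a radial concentration-compactness step that exploits the positivity of the potential contribution to the energy when $\pm=-$.

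By the blow-up alternative recalled in the introduction, it suffices to bound $\|\nabla u(t)\|_2$ on the forward maximal interval. Assume for contradiction that $\|\nabla u(t_n)\|_2\to\infty$ along some sequence $t_n$. Set $\lambda_n:=\|\nabla u(t_n)\|_2^{-1}\to 0$ and define the rescaled radial profiles $v_n(x):=\lambda_n^{N/2}u(t_n,\lambda_n x)$, so that $\|v_n\|_2=\|Q\|_2$ and $\|\nabla v_n\|_2=1$. A direct scaling computation combined with conservation of energy yields
\[
E(u_0)\;=\;\lambda_n^{-2}\left(\frac{1}{2}-\frac{1}{2+\frac{4}{N}}\|v_n\|_{2+\frac{4}{N}}^{2+\frac{4}{N}}\right)+\frac{\lambda_n^{-2\sigma}}{2}\int_{\mathbb{R}^N}\frac{|v_n(y)|^2}{|y|^{2\sigma}}\,dy.
\]
Both summands on the right are non-negative: the bracket by the Gagliardo--Nirenberg inequality together with $\|v_n\|_2=\|Q\|_2$, and the integral because of the sign $\pm=-$ in \eqref{IPNLS}. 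Isolating them gives on the one hand $\int|v_n|^2/|y|^{2\sigma}\,dy\le 2\lambda_n^{2\sigma}E(u_0)\to 0$, and on the other hand $\|v_n\|_{2+4/N}^{2+4/N}\to 1+\tfrac{2}{N}$, i.e.\ the Gagliardo--Nirenberg inequality saturates in the limit.

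The core step is to promote this saturation into strong $H^1$ convergence of $v_n$ to a nontrivial limit. Since $N\ge 2$ and each $v_n$ is radial, the Strauss-type compact embedding $H^1_{\mathrm{rad}}(\mathbb{R}^N)\hookrightarrow L^{2+4/N}(\mathbb{R}^N)$ allows me to extract a subsequence with $v_n\rightharpoonup V$ weakly in $H^1$ and $v_n\to V$ in $L^{2+4/N}$, so that $\|V\|_{2+4/N}^{2+4/N}=1+\tfrac{2}{N}$ and in particular $V\not\equiv 0$. Weak lower semi-continuity gives $\|V\|_2\le\|Q\|_2$ and $\|\nabla V\|_2\le 1$; inserting these into the Gagliardo--Nirenberg inequality forces all three inequalities to be equalities. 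Hence $V$ is a radial Gagliardo--Nirenberg extremizer of the form $V=\alpha Q(\beta\,\cdot)e^{i\theta}$, and the $H^1$-norms converge, so $v_n\to V$ strongly in $H^1$.

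Finally, the Hardy--Sobolev type inequality
\[
\int_{\mathbb{R}^N}\frac{|w(y)|^2}{|y|^{2\sigma}}\,dy\;\lesssim\;\|w\|_2^{2(1-\sigma)}\|\nabla w\|_2^{2\sigma},
\]
valid under \eqref{index1}, renders $w\mapsto\int|w|^2/|y|^{2\sigma}\,dy$ continuous on $H^1$; therefore $\int|v_n|^2/|y|^{2\sigma}\,dy\to\int|V|^2/|y|^{2\sigma}\,dy>0$, in direct contradiction with the vanishing obtained above. The hard part I anticipate is precisely the strong-$H^1$ compactness step: it is the only place where both the radial assumption on $u_0$ and the restriction $N\ge 2$ are used, preventing the escape of mass to infinity that would otherwise derail the argument for generic critical-mass data in $H^1$.
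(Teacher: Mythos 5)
Your proof is correct and is essentially the standard argument for this rigidity statement: the paper itself only cites \cite{NI} for Theorem \ref{theorem:NI2}, and the proof there (following \cite{LMR}) is exactly your scheme of rescaling by $\|\nabla u(t_n)\|_2^{-1}$, splitting the conserved energy into two non-negative pieces, extracting a non-trivial radial limit via the compact embedding $H^1_{\mathrm{rad}}\hookrightarrow L^{2+4/N}$ for $N\geq 2$, and contradicting the vanishing of the potential term. The only remark is that the final step can be shortened: weak lower semicontinuity of $w\mapsto\int|w|^2|y|^{-2\sigma}dy$ already forces $V=0$, so the full strong-$H^1$ convergence and the classification of Gagliardo--Nirenberg extremizers are not strictly needed.
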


\subsection{Main results}
It is immediately clear from the classical argument that all subcritical-mass solutions for \eqref{NLS} with \eqref{index1} are global and bounded in $H^1(\mathbb{R}^N)$.

In contrast, regarding critical mass in \eqref{NLS} with $\pm=-$, we obtain the following result:

\begin{theorem}[Existence of a minimal-mass blow-up solution]
\label{theorem:EMBS}
Assume \eqref{index2}. Then for any energy level $E_0\in\mathbb{R}$, there exist $t_0<0$ and a radially symmetric initial value $u_0\in H^1(\mathbb{R}^N)$ with
\[
\|u_0\|_2=\|Q\|_2,\quad E(u_0)=E_0
\]
such that the corresponding solution $u$ for \eqref{NLS} with $\pm=-$ and $u(t_0)=u_0$ blows up at $t=0$. Moreover,
\[
\left\|u(t)-\frac{1}{\lambda(t)^\frac{N}{2}}P\left(t,\frac{x}{\lambda(t)}\right)e^{-i\frac{b(t)}{4}\frac{|x|^2}{\lambda(t)^2}+i\gamma(t)}\right\|_{\Sigma^1}\rightarrow 0\quad (t\nearrow 0)
\]
holds for some blow-up profile $P$ and $C^1$ functions $\lambda:(t_0,0)\rightarrow(0,\infty)$ and $b,\gamma:(t_0,0)\rightarrow\mathbb{R}$ such that
\begin{align*}
P(t)&\rightarrow Q\quad\mbox{in}\ H^1(\mathbb{R}^N),&\lambda(t)&\approx|t|^{\frac{1}{1+\sigma}}\left|\log|t|\right|^{\frac{1}{2+2\sigma}},\\
b(t)&\approx|t|^{\frac{1-\sigma}{1+\sigma}}|\log|t||^{\frac{1}{1+\sigma}},& \gamma(t)^{-1}&=O\left(|t|^{\frac{1-\sigma}{1+\sigma}}\right)
\end{align*}
as $t\nearrow 0$.
\end{theorem}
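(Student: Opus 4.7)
The plan is to follow the scheme of \cite{LMR,NI}, itself based on the Rapha\"{e}l-Szeftel construction \cite{RSEU}: build an approximate blow-up profile indexed by modulation parameters $(\lambda,b,\gamma)$, derive a closed formal ODE system for these parameters, and then construct a genuine solution by a backward-in-time compactness argument with uniform control via a modulated energy/virial Lyapunov functional. The qualitatively new effect compared with \cite{NI} is that rescaling the logarithm of $|x|$ produces an additional $|\log\lambda|$ factor in the modulation equations, which forces a logarithmic correction to the blow-up rate.

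First, I would rewrite \eqref{NLS} in modulated coordinates, setting $u(t,x)=\lambda^{-N/2}v(s,y)e^{-ib|y|^{2}/4+i\gamma}$ with $y=x/\lambda$ and $ds/dt=\lambda^{-2}$. Under this change of variables the potential term transforms into
\[
\mp\,\lambda^{2-2\sigma}\,\frac{\log\lambda+\log|y|}{|y|^{2\sigma}}\,v,
\]
which is small in $\lambda$ under \eqref{index2} but carries the extra $|\log\lambda|$ factor. Seeking $v=P=Q+bR+\lambda^{2-2\sigma}(\log\lambda^{-1})A_{1}+\lambda^{2-2\sigma}A_{2}+\cdots$, with $R,A_{1},A_{2}$ solutions of explicit linear equations at $Q$, and imposing the standard orthogonality conditions that fix these corrections produces the formal modulation ODEs
\[
\frac{\lambda_{s}}{\lambda}=-b+O,\qquad b_{s}+b^{2}=c_{\sigma}\,\lambda^{2-2\sigma}\log\frac{1}{\lambda}+O,\qquad \gamma_{s}=-1+O,
\]
with $c_{\sigma}>0$ when $\pm=-$. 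The ansatz $\lambda(t)\approx|t|^{1/(1+\sigma)}\left|\log|t|\right|^{1/(2+2\sigma)}$ and $b(t)\approx|t|^{(1-\sigma)/(1+\sigma)}\left|\log|t|\right|^{1/(1+\sigma)}$ balances both sides of the $b$-equation at order $|t|^{2(1-\sigma)/(1+\sigma)}\left|\log|t|\right|^{2/(1+\sigma)}$, which fixes the rates announced in the theorem and makes explicit why the $\left|\log|t|\right|$-correction to the $|t|^{1/(1+\sigma)}$-scaling of \cite{NI} is unavoidable.

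With the formal profile and its dynamics in hand, the backward construction proceeds as in \cite{LMR,NI}. Fix a sequence $t_{n}\nearrow 0$, set $u_{n}(t_{n})$ equal to the approximate profile at time $t_{n}$ with one scalar free parameter tuned so that $\|u_{n}(t_{n})\|_{2}=\|Q\|_{2}$ and $E(u_{n}(t_{n}))=E_{0}$, and solve \eqref{NLS} backward. Decompose $u_{n}(t)=\tilde{u}_{\mathcal{P}(t)}+\varepsilon_{n}(t)$ with the modulation path $\mathcal{P}=(\lambda,b,\gamma)$ determined by three orthogonality conditions on $\varepsilon_{n}$ associated to the phase, scaling and quadratic-phase symmetries of the mass-critical flow. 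A bootstrap on $\|\varepsilon_{n}(t)\|_{\Sigma^{1}}$ and on the deviation of $(\lambda,b)$ from the formal ansatz, closed by a Rapha\"{e}l-Szeftel-type modulated energy/virial Lyapunov functional together with the coercivity of the linearized operator at $Q$ in the critical-mass subspace, should yield uniform-in-$n$ control on a common interval $[t_{0},t_{n}]$. A compactness argument in $\Sigma^{1}$ then produces a limit $u\in C([t_{0},0),\Sigma^{1})$ inheriting the formal dynamics of $\mathcal{P}$, which is the required minimal-mass blow-up solution.

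The main obstacle will be closing the bootstrap in the presence of the log factor. The monotonicity formula for the modulated energy will contain error terms of the form $\lambda^{2-2\sigma}|\log\lambda|\int|\varepsilon_{n}|^{2}/|y|^{2\sigma}\,dy$ and $\lambda^{2-2\sigma}\int\bigl|\log|y|\bigr||\varepsilon_{n}|^{2}/|y|^{2\sigma}\,dy$; these must be absorbed into the coercive part via Hardy and weighted Hardy inequalities available under \eqref{index2}. The additional $|\log\lambda|$ factor relative to the pure inverse-potential case tightens the margin and forces one to track logarithmic powers carefully at every step, in particular in the inversion of the linearized ODE system that certifies the stability of the $\left|\log|t|\right|$-corrected ansatz and in the energy-level adjustment that produces the prescribed $E_{0}\in\mathbb{R}$.
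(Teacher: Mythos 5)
Your proposal follows essentially the same strategy as the paper: the same modulated profile expansion in powers of $b$ and $\lambda^{2-2\sigma}\log\lambda$, the same formal modulation system $\frac{1}{\lambda}\lambda_s+b\approx 0$, $b_s+b^2\approx\beta_1\lambda^{2-2\sigma}|\log\lambda|$ whose balancing gives the stated log-corrected rates, and the same backward compactness construction from data $P_{\lambda_{1,n},b_{1,n},0}$ at times $t_n\nearrow 0$, closed by a bootstrap on a modulated energy/virial functional with the coercivity \eqref{Lcoer}. The only minor deviation is that the paper tunes $(\lambda_1,b_1)$ to prescribe the energy $E_0$ while the critical mass $\|Q\|_2$ is attained only in the limit $n\to\infty$, rather than normalizing $\|u_n(t_n)\|_2=\|Q\|_2$ exactly as you suggest.
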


On the other hand, the following result holds in \eqref{NLS} with $\pm=+$.

\begin{theorem}[Non-existence of a radial minimal-mass blow-up solution]
\label{theorem:NEMBS}
Assume $N\geq 2$ and \eqref{index1}. If $u_0\in H^1_\mathrm{rad}(\mathbb{R}^N)$ such that $\|u_0\|_2=\|Q\|_2$, the corresponding solution $u$ for \eqref{NLS} with $\pm=+$ and $u(0)=u_0$ is global and bounded in $H^1(\mathbb{R}^N)$.
\end{theorem}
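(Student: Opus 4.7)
My proof would be a contradiction argument exploiting the sign of the log-singular potential: for the $+$ case the energy reads
\[
E(u)=E_c(u)-\tfrac{1}{2}\int\frac{\log|x|}{|x|^{2\sigma}}|u|^2\,dx,\qquad E_c(u):=\tfrac{1}{2}\|\nabla u\|_2^2-\tfrac{N}{2N+4}\|u\|_{2+4/N}^{2+4/N},
\]
and critical mass forces $E_c(u)\geq 0$ by Gagliardo--Nirenberg. A radial blow-up would concentrate mass at $x=0$, where the potential $V(x)=\log|x|/|x|^{2\sigma}$ is very negative, so $\int V|u|^2\,dx$ should diverge to $-\infty$; but energy conservation combined with $E_c\geq 0$ forces $\int V|u|^2\,dx\geq-2E(u_0)$, a contradiction.

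To make this precise, suppose the conclusion fails. By the blow-up alternative and $L^2$-conservation there is a sequence $t_n$ in the maximal interval with $\|\nabla u(t_n)\|_2\to\infty$. Set $\lambda_n:=\|\nabla Q\|_2/\|\nabla u(t_n)\|_2\to 0$ and $v_n(y):=\lambda_n^{N/2}u(t_n,\lambda_n y)$, so $v_n$ is radial with $\|v_n\|_2=\|Q\|_2$ and $\|\nabla v_n\|_2=\|\nabla Q\|_2$. A direct change of variables gives
\[
\int\frac{\log|x|}{|x|^{2\sigma}}|u(t_n)|^2\,dx=\lambda_n^{-2\sigma}\bigl(I_n\log\lambda_n+J_n\bigr),
\]
where $I_n:=\int|y|^{-2\sigma}|v_n|^2\,dy$ and $J_n:=\int|y|^{-2\sigma}\log|y|\,|v_n|^2\,dy$ are uniformly bounded in $n$; under \eqref{index1} the bound for $I_n$ comes from a Hardy-type inequality in $\|v_n\|_{H^1}$, and that for $J_n$ from splitting $|y|<1$ and $|y|\geq 1$ and using the elementary bound $\log r/r^{2\sigma}\leq (2\sigma e)^{-1}$ on $\{r\geq 1\}$.

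Combining this with the scaling identity $E_c(u(t_n))=\lambda_n^{-2}E_c(v_n)$ and $E(u(t_n))=E(u_0)$ yields
\[
E_c(v_n)=\lambda_n^{2}E(u_0)+\tfrac{1}{2}\lambda_n^{2-2\sigma}\bigl(I_n\log\lambda_n+J_n\bigr),
\]
so $E_c(v_n)\to 0$ because $\sigma<1$. The main obstacle is the compactness step that converts this variational convergence into strong $H^1$-convergence: since $v_n$ has critical mass and asymptotically saturates Gagliardo--Nirenberg with $\|\nabla v_n\|_2=\|\nabla Q\|_2$, the standard concentration-compactness/profile decomposition, combined with radial symmetry (which kills the translation parameter), produces a subsequence and phases $\theta_n$ with $e^{i\theta_n}v_n\to Q$ in $H^1(\mathbb{R}^N)$. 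In particular $I_n\to I_\infty:=\int|y|^{-2\sigma}Q^2\,dy>0$ and $J_n\to J_\infty\in\mathbb{R}$.

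Rewriting the energy identity in its unscaled form,
\[
0\leq E_c(u(t_n))=E(u_0)+\tfrac{1}{2}\lambda_n^{-2\sigma}\bigl(I_n\log\lambda_n+J_n\bigr),
\]
the right-hand side is dominated by $\tfrac{1}{2}\lambda_n^{-2\sigma}(\log\lambda_n)I_\infty$, which tends to $-\infty$ because $\lambda_n^{-2\sigma}\to+\infty$, $\log\lambda_n\to-\infty$, and $I_\infty>0$. This contradicts $E_c(u(t_n))\geq 0$, so $\|\nabla u(t_n)\|_2$ cannot tend to $\infty$; hence $u$ is global and bounded in $H^1$, and the argument for $t<0$ is symmetric by time reversal.
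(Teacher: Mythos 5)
Your argument is correct and is essentially the proof the paper has in mind: the paper simply defers to the proofs of Theorems \ref{theorem:LMR2} and \ref{theorem:NI2} in \cite{LMR,NI}, which run exactly this way — rescale by $\|\nabla u(t_n)\|_2$, use the variational characterization of $Q$ with radial symmetry (hence $N\geq 2$) to get $e^{i\theta_n}v_n\to Q$ in $H^1$, and derive a contradiction from energy conservation because the potential term $-\frac{1}{2}\int |x|^{-2\sigma}\log|x|\,|u|^2\,dx$ would have to blow up to $+\infty$ while $E_{\mathrm{crit}}\geq 0$ at critical mass. The sign bookkeeping, the scaling identity $\int V|u(t_n)|^2 = \lambda_n^{-2\sigma}(I_n\log\lambda_n+J_n)$, and the Hardy-type bounds under \eqref{index1} all check out, so this is a faithful reconstruction of the omitted proof.
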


This result is proved in the same way as for Theorem \ref{theorem:LMR2} and Theorem \ref{theorem:NI2}.

\subsection{Comments regarding the main results}
We present some comments regarding Theorem \ref{theorem:EMBS}.

The assumption $\sigma>0$ is used only thet \eqref{NLS} is locally well-posedness in $H^1(\mathbb{R}^N)$. Asuming \eqref{NLS} is locally well-posedness in $\Sigma$, it can be proven in $\sigma=0$ since the proof is done in $\Sigma$.

Near the origin,
\[
\frac{1}{|x|^{2\sigma}}\leq -\frac{1}{|x|^{2\sigma}}\log|x|\leq\frac{1}{|x|^{2(\sigma+\epsilon)}}
\]
holds for any $\epsilon>0$. The corresponding blow-up rates from Theorem \ref{theorem:NI1} and Theorem \ref{theorem:EMBS} satisfy
\[
|t|^{-\frac{1}{1+\sigma}}\gtrsim |t|^{-\frac{1}{1+\sigma}}|\log|t||^{-\frac{1}{2+2\sigma}} \gtrsim |t|^{-\frac{1}{1+\sigma+\epsilon}}.
\]
This suggests that a large or small relationship between the strength of the potential's singularities gives a large or small relationship for the blow-up rates.

This result suggests that it is possible to construct a critical-mass blow-up solution if an equation such as
\[
0=i\frac{\partial v}{\partial s}+\Delta v-v+f(v)+g_1(\lambda)g_2(y,v)+\mbox{error terms}
\]
can be obtained by separating $\lambda$ from $v$, as in \eqref{veq}. In order to construct an blow-up solution, it is necessary to be able to obtain at least $\lambda_{\app},b_{\app}$ in Lemma \ref{lbsol} and define $\mathcal{F}$ in Lemma \ref{paraini}. Furthermore, in the case of $g(\lambda)=O(\lambda^2)$, we can expect that there exists a critical-mass blow-up solution such that the blow-up rate is $t^{-1}$.

\section{Notations}
In this section, we introduce the notation used in this paper.

Let
\[
\mathbb{N}:=\mathbb{Z}_{\geq 1},\quad\mathbb{N}_0:=\mathbb{Z}_{\geq 0}.
\]
We define
\begin{align*}
(u,v)_2&:=\re\int_{\mathbb{R}^N}u(x)\overline{v}(x)dx,&\left\|u\right\|_p&:=\left(\int_{\mathbb{R}^N}|u(x)|^pdx\right)^\frac{1}{p},\\
f(z)&:=|z|^\frac{4}{N}z,&F(z)&:=\frac{1}{2+\frac{4}{N}}|z|^{2+\frac{4}{N}}\quad \mbox{for $z\in\mathbb{C}$}.
\end{align*}
By identifying $\mathbb{C}$ with $\mathbb{R}^2$, we denote the differentials of $f$ and $F$ by $df$ and $dF$, respectively. We define
\[
\Lambda:=\frac{N}{2}+x\cdot\nabla,\quad L_+:=-\Delta+1-\left(1+\frac{4}{N}\right)Q^\frac{4}{N},\quad L_-:=-\Delta+1-Q^\frac{4}{N}.
\]
Namely, $\Lambda$ is the generator of $L^2$-scaling, and $L_+$ and $L_-$ come from the linearised Schr\"{o}dinger operator to close $Q$. Then
\[
L_-Q=0,\quad L_+\Lambda Q=-2Q,\quad L_-|x|^2Q=-4\Lambda Q,\quad L_+\rho=|x|^2 Q,\quad L_-xQ=-\nabla Q
\]
hold, where $\rho\in\mathcal{S}(\mathbb{R}^N)$ is the unique radial solution for $L_+\rho=|x|^2 Q$. Note that there exist $C_\alpha,\kappa_\alpha>0$ such that
\[
\left|\left(\frac{\partial}{\partial x}\right)^\alpha Q(x)\right|\leq C_\alpha Q(x),\quad \left|\left(\frac{\partial}{\partial x}\right)^\alpha \rho(x)\right|\leq C_\alpha(1+|x|)^{\kappa_\alpha} Q(x).
\]
for any multi-index $\alpha$. Furthermore, there exists $\mu>0$ such that for all $u\in H_{\mathrm{rad}}^1(\mathbb{R}^N)$,
\begin{align}
\label{Lcoer}
&\left\langle L_+\re u,\re u\right\rangle+\left\langle L_-\im u,\im u\right\rangle\nonumber\\
\geq&\ \mu\left\|u\right\|_{H^1}^2-\frac{1}{\mu}\left({(\re u,Q)_2}^2+{(\re u,|x|^2 Q)_2}^2+{(\im u,\rho)_2}^2\right)
\end{align}
(e.g., see \cite{MRO,MRUPB,RSEU,WL}). We denote by $\mathcal{Y}$ the set of functions $g\in C^{\infty}(\mathbb{R}^N\setminus\{0\})\cap C(\mathbb{R}^N)\cap H^1_{\mathrm{rad}}(\mathbb{R}^N)$ such that
\[
\exists C_\alpha,\kappa_\alpha>0,\ |x|\geq 1\Rightarrow \left|\left(\frac{\partial}{\partial x}\right)^\alpha g(x)\right|\leq C_\alpha(1+|x|)^{\kappa_{\alpha}}Q(x)
\]
for any multi-index $\alpha$. Moreover, we defined by $\mathcal{Y}'$ the set of functions $g\in\mathcal{Y}$ such that
\[
\Lambda g\in H^1(\mathbb{R}^N)\cap C(\mathbb{R}^N).
\]

Finally, we use the notation $\lesssim$ and $\gtrsim$ when the inequalities hold up to a positive constant. We also use the notation $\approx$ when $\lesssim$ and $\gtrsim$ hold. Moreover, positive constants $C$ and $\epsilon$ are sufficiently large and small, respectively.

\section{Construction of a blow-up profile}
\label{sec:constprof}
In this section, we construct a blow-up profile $P$ and introduce a decomposition of functions based on the methodology in \cite{LMR,RSEU}.

Heuristically, we state the strategy. We look for a blow-up solution in the form of \eqref{transutov}:
\[
u(t,x)=\frac{1}{\lambda(s)^\frac{N}{2}}v\left(s,y\right)e^{-i\frac{b(s)|y|^2}{4}+i\gamma(s)},\quad y=\frac{x}{\lambda(s)},\quad \frac{ds}{dt}=\frac{1}{\lambda(s)^2},
\]
where $v$ satisfies
\begin{align}
\label{veq}
0&=i\frac{\partial v}{\partial s}+\Delta v-v+f(v)-\lambda^\alpha\log\lambda \frac{1}{|y|^{2\sigma}}v-\lambda^\alpha\frac{1}{|y|^{2\sigma}}\log|y|v\nonumber\\
&\hspace{20pt}-i\left(\frac{1}{\lambda}\frac{\partial \lambda}{\partial s}+b\right)\Lambda v+\left(1-\frac{\partial \gamma}{\partial s}\right)v+\left(\frac{\partial b}{\partial s}+b^2\right)\frac{|y|^2}{4}v-\left(\frac{1}{\lambda}\frac{\partial \lambda}{\partial s}+b\right)b\frac{|y|^2}{2}v,
\end{align}
where $\alpha=2-2\sigma$. Since we look for a blow-up solution, it may holds that $\lambda(s)\rightarrow 0$ as $s\rightarrow\infty$. Therefore, it seems that $\lambda^\alpha |y|^{-2\sigma}v$ is ignored. By ignoring $\lambda^\alpha |y|^{-2\sigma}v$,
\[
v(s,y)=Q(y),\quad \frac{1}{\lambda}\frac{\partial \lambda}{\partial s}+b=1-\frac{\partial \gamma}{\partial s}=\frac{\partial b}{\partial s}+b^2=0
\]
is a solution of \eqref{veq}. Accordingly, $v$ is expected to be close to $Q$. We now consider the case where $\sigma=0$, i.e., the critical problem. Then $\lambda^2v$ corresponds to the linear term with the constant coefficient and can be removed by an appropriate transformation. In other words, $\lambda^2v$ is a negligible term for the construction of minimal-mass blow-up solutions. This suggests that $\alpha=2$ may be the threshold for ignoring the term in the context of minimal-mass blow-up. Therefore, $\lambda^\alpha |y|^{-2\sigma}v$ may become a non-negligible term if $\alpha<2$, i.e., $\sigma>0$. Also, \eqref{veq} is difficult to solve explicitly. Consequently, we construct an approximate solution $P$ that is close to $Q$ and fully incorporates the effects of $\lambda^\alpha |y|^{-2\sigma}v$, e.g., the singularity of the origin.

For $K\in\mathbb{N}$, we define
\[
\Sigma_K:=\left\{\ (j,k_1,k_2)\in{\mathbb{N}_0}^3\ \middle|\ j+k_1+k_2\leq K\ \right\}.
\]

\begin{proposition}
\label{theorem:constprof}
Let $K,K'\in\mathbb{N}$ be sufficiently large. Let $\lambda(s)>0$ and $b(s)\in\mathbb{R}$ be $C^1$ functions of $s$ such that $\lambda(s)+|b(s)|\ll 1$.

(i) \textit{Existence of blow-up profile.} For any $(j,k_1,k_2)\in\Sigma_{K+K'}$, there exist $P_{1,j,k_1,k_2}^+,P_{2,j,k_1,k_2}^+,P_{1,j,k_1,k_2}^-,P_{2,j,k_1,k_2}^-\in\mathcal{Y}'$, $\beta_{1,j,k_1,k_2},\beta_{2,j,k_1,k_2}\in\mathbb{R}$, and $\Psi\in H^1(\mathbb{R}^N)$ such that $P$ satisfies
\[
i\frac{\partial P}{\partial s}+\Delta P-P+f(P)-\lambda^\alpha\log\lambda \frac{1}{|y|^{2\sigma}}P-\lambda^\alpha \frac{1}{|y|^{2\sigma}}\log|y|P+\theta\frac{|y|^2}{4}P=\Psi,
\]
where $\alpha=2-2\sigma$, and  $P$ and $\theta$ are defined by
\begin{align*}
P(s,y)&:=Q(y)+\sum_{(j,k_1,k_2)\in\Sigma_{K+K'}}b(s)^{2j}\left(\lambda(s)^\alpha\log\lambda(s)\right)^{k_1}\lambda(s)^{k_2\alpha}\left(\lambda(s)^\alpha\log\lambda(s) P_{1,j,k_1,k_2}^+(y)+\lambda(s)^\alpha P_{2,j,k_1,k_2}^+(y)\right)\\
&\hspace{20pt}+i\sum_{(j,k_1,k_2)\in\Sigma_{K+K'}}b(s)^{2j+1}\left(\lambda(s)^\alpha\log\lambda(s)\right)^{k_1}\lambda(s)^{k_2\alpha}\left(\lambda(s)^\alpha\log\lambda(s) P_{1,j,k_1,k_2}^-(y)+\lambda(s)^\alpha P_{2,j,k_1,k_2}^-(y)\right),\\
\theta(s)&:=\sum_{(j,k_1,k_2)\in\Sigma_{K+K'}}b(s)^{2j}\left(\lambda(s)^\alpha \log\lambda(s)\right)^{k_1}\lambda(s)^{k_2\alpha}\left(-\lambda(s)^\alpha\log\lambda(s)\beta_{1,j,k_1,k_2}+\lambda(s)^\alpha\beta_{2,j,k_1,k_2}\right).
\end{align*}
Moreover, for some sufficiently small $\epsilon'>0$,
\[
\left\|e^{\epsilon'|y|}\Psi\right\|_{H^1}\lesssim\lambda^\alpha|\log\lambda|\left(\left|\frac{1}{\lambda}\frac{\partial \lambda}{\partial s}+b\right|+\left|\frac{\partial b}{\partial s}+b^2-\theta\right|\right)+(b^2+\lambda^\alpha|\log\lambda|)^{K+2}
\]
holds.

(ii) \textit{Mass and energy properties of blow-up profile.} Let define
\[
P_{\lambda,b,\gamma}(s,x):=\frac{1}{\lambda(s)^\frac{N}{2}}P\left(s,\frac{x}{\lambda(s)}\right)e^{-i\frac{b(s)}{4}\frac{|x|^2}{\lambda(s)^2}+i\gamma(s)}.
\]
Then
\begin{align*}
\left|\frac{d}{ds}\|P_{\lambda,b,\gamma}\|_2^2\right|&\lesssim\lambda^\alpha|\log\lambda|\left(\left|\frac{1}{\lambda}\frac{\partial \lambda}{\partial s}+b\right|+\left|\frac{\partial b}{\partial s}+b^2-\theta\right|\right)+(b^2+\lambda^\alpha|\log\lambda|)^{K+2},\\
\left|\frac{d}{ds}E(P_{\lambda,b,\gamma})\right|&\lesssim\frac{1}{\lambda^2}\left(\left|\frac{1}{\lambda}\frac{\partial \lambda}{\partial s}+b\right|+\left|\frac{\partial b}{\partial s}+b^2-\theta\right|+(b^2+\lambda^\alpha|\log\lambda|)^{K+2}\right)
\end{align*}
hold. Moreover,
\begin{eqnarray}
\label{Eesti}
\left|8E(P_{\lambda,b,\gamma})-\||y|Q\|_2^2\left(\frac{b^2}{\lambda^2}+\frac{2\beta_1}{2-\alpha}\lambda^{\alpha-2}\log\lambda-\beta'_1\lambda^{\alpha-2}\right)\right|\lesssim\frac{\lambda^\alpha|\log\lambda|(b^2+\lambda^\alpha|\log\lambda|)}{\lambda^2}
\end{eqnarray}
holds, where 
\[
\beta_1:=\beta_{1,0,0,0}=\frac{4\sigma\||\cdot|^{-\sigma}Q\|_2^2}{\||\cdot|Q\|_2^2}>0\quad \beta'_1:=\frac{4}{\||y|Q\|_2^2}\int_{\mathbb{R}^N}\frac{1}{|y|^{2\sigma}}\log|y|Q^2dy.
\]
\end{proposition}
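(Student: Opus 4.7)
The plan is to follow the Rapha\"{e}l--Szeftel / Le Coz--Martel--Rapha\"{e}l construction scheme (as in \cite{RSEU,LMR,NI}). I write $P$ and $\theta$ as formal polynomial ans\"atze in the three small parameters $b^{2}$, $\lambda^{\alpha}\log\lambda$ and $\lambda^{\alpha}$, arranging so that the real part of $P-Q$ (resp.\ imaginary part of $P$) carries only even (resp.\ odd) powers of $b$; the linearization of $\Delta v-v+f(v)$ around $Q$ then decomposes into two blocks governed by $L_{+}$ and $L_{-}$. Substituting the ansatz into the equation for $P$ and replacing $\partial_{s}$ on $\lambda,b$ by the ``on-shell'' relations $\partial_{s}\lambda/\lambda=-b$ and $\partial_{s}b=\theta-b^{2}$ (any deviation is later absorbed into $\Psi$), I collect monomials of each type $b^{2j}(\lambda^{\alpha}\log\lambda)^{k_{1}}\lambda^{k_{2}\alpha}\cdot(\lambda^{\alpha}\log\lambda\text{ or }\lambda^{\alpha})$. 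At every such order this produces a pair of radial elliptic equations
\[
L_{+}P^{+}_{i,j,k_{1},k_{2}}=F^{+}_{i,j,k_{1},k_{2}},\qquad L_{-}P^{-}_{i,j,k_{1},k_{2}}=F^{-}_{i,j,k_{1},k_{2}},
\]
whose sources $F^{\pm}$ depend only on strictly lower-order profiles (together with $|y|^{-2\sigma}Q$ at the base of the induction).

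Next, I solve this hierarchy inductively on $j+k_{1}+k_{2}$. Since $\ker L_{+}|_{\mathrm{rad}}=\{0\}$, the $L_{+}$ equations are inverted directly; since $\ker L_{-}|_{\mathrm{rad}}=\mathbb{R}Q$, the $L_{-}$ equations are solvable precisely when $(F^{-}_{\cdots},Q)_{2}=0$, and this orthogonality uniquely fixes the corresponding $\beta$ at each step. At the bottom of the hierarchy, the $\lambda^{\alpha}\log\lambda$-order equation reads $L_{+}P^{+}_{1,0,0,0}=-|y|^{-2\sigma}Q-(\beta_{1}/4)|y|^{2}Q$, which I invert via $L_{+}\rho=|y|^{2}Q$. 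The value $\beta_{1}$ is forced one order later: at order $b\lambda^{\alpha}\log\lambda$ the $L_{-}$-equation reduces to $L_{-}P^{-}_{1,0,0,0}=-\alpha P^{+}_{1,0,0,0}$ (from differentiating $\lambda^{\alpha}\log\lambda$ in time), whose solvability against $Q$, combined with $L_{+}^{-1}Q=-\Lambda Q/2$ and the integration-by-parts identities $(|y|^{-2\sigma}Q,\Lambda Q)_{2}=\sigma\||\cdot|^{-\sigma}Q\|_{2}^{2}$, $(|y|^{2}Q,\Lambda Q)_{2}=-\||\cdot|Q\|_{2}^{2}$, gives exactly the stated formula. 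Since $0<\sigma<1$ ensures $|y|^{-2\sigma}Q\in L^{2}$, elliptic regularity plus exponential decay of $Q$ and $\rho$ produce $P^{\pm}_{i,j,k_{1},k_{2}}\in\mathcal{Y}'$ at every order. The residual $\Psi$ then collects (i) the terms where $\partial_{s}$ actually produces $\partial_{s}\lambda/\lambda+b$ or $\partial_{s}b+b^{2}-\theta$, each multiplied by a coefficient of size $\lambda^{\alpha}|\log\lambda|$, and (ii) the truncation tail beyond total order $K$; the exponential localization of $\Psi$ follows from the $\mathcal{Y}'$-decay of every profile combined with the smoothness of $f$ near $Q$.

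For part (ii), I differentiate $\|P_{\lambda,b,\gamma}\|_{2}^{2}$ and $E(P_{\lambda,b,\gamma})$ directly in $s$ using the equation satisfied by $P_{\lambda,b,\gamma}$: integration by parts eliminates the Hamiltonian pieces, leaving only contributions proportional to $\Psi$ and to the two modulation defects, which yields the stated derivative bounds (the extra $1/\lambda^{2}$ in the energy case coming from the $\lambda$-rescaling of $E$). For the explicit asymptotic \eqref{Eesti}, I change variables $y=x/\lambda$ and expand $P=Q+O(b^{2}+\lambda^{\alpha}|\log\lambda|)$: the phase $e^{-ib|y|^{2}/4}$ produces the leading $(b^{2}/8\lambda^{2})\||y|Q\|_{2}^{2}$ (the cross term with $\nabla P$ vanishing by parity of $Q$), while the potential contributes
\[
\frac{1}{2\lambda^{2}}\int\frac{\log\lambda+\log|y|}{|y|^{2\sigma}}Q^{2}\,dy=\frac{\log\lambda}{2\lambda^{2}}\||y|^{-\sigma}Q\|_{2}^{2}+\frac{1}{2\lambda^{2}}\int\frac{\log|y|}{|y|^{2\sigma}}Q^{2}\,dy.
\]
Rewriting the two constants through the definitions of $\beta_{1}$ and $\beta'_{1}$, using $\alpha-2=-2\sigma$ and $E_{\mathrm{crit}}(Q)=0$, produces \eqref{Eesti}, with the next-order $P-Q$ contributions absorbed into the stated error $\lambda^{\alpha}|\log\lambda|(b^{2}+\lambda^{\alpha}|\log\lambda|)/\lambda^{2}$.

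The main obstacle is the combinatorial bookkeeping of the induction: I must verify that at every order the unique free coefficient $\beta_{\cdots}$ actually suffices to enforce $(F^{-}_{\cdots},Q)_{2}=0$. This relies crucially on the identity $L_{+}\Lambda Q=-2Q$, which pairs each even-$b$ real profile with its odd-$b$ imaginary partner through the $i\partial_{s}P$ term. Once this pairing is systematized, the decay and regularity estimates propagate cleanly through $L_{\pm}^{-1}$ despite the origin singularity $|y|^{-2\sigma}$, and the exponential-weight bounds on $\Psi$ follow routinely.
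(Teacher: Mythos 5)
Your overall strategy is the same as the paper's: the same polynomial ansatz in $b^{2}$, $\lambda^{\alpha}\log\lambda$, $\lambda^{\alpha}$ with real/even and imaginary/odd $b$-parity, the same $L_{+}/L_{-}$ hierarchy solved inductively, the same determination of $\beta_{1,0,0,0}$ from the solvability condition $(\tilde P^{+}_{1,0,0,0},Q)_{2}=0$ via $L_{+}\Lambda Q=-2Q$ and the two integration-by-parts identities, and the same treatment of $\Psi$ and of part (ii). The base case and the value of $\beta_{1}$ check out exactly against the paper's computation.

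There is, however, one genuine gap: your claim that the decay and regularity estimates ``propagate cleanly through $L_{\pm}^{-1}$ despite the origin singularity'' is precisely the point that does \emph{not} come for free and that the paper spends Propositions \ref{theorem:Ssol}--\ref{Pint} controlling. At each step of the hierarchy the source contains $|y|^{-2\sigma}P^{\pm}_{\bullet,j,k_{1}-1,k_{2}}$ and $|y|^{-2\sigma}\log|y|\,P^{\pm}_{\bullet,j,k_{1},k_{2}-1}$, so the local behaviour at the origin degrades as $k_{1}+k_{2}$ grows; the paper tracks this through the exponents $\sigma_{k},\sigma_{k}'$ in \eqref{origindecay} and needs $K'$ extra orders, together with the correction constants $c^{\pm}_{\bullet,j,k_{1},k_{2}}$ (which shift the profiles by multiples of $\Lambda Q$, $Q$ and $|y|^{2}Q$ so as to arrange $\tilde P^{\pm}(0)\ne 0$ at order $K+1$ and $\tilde P^{\pm}(0)=0$ beyond) to guarantee $|y|^{-2}P^{\pm}_{\bullet,0,k_{1},k_{2}},\ |y|^{-1}|\nabla P^{\pm}_{\bullet,0,k_{1},k_{2}}|\in L^{\infty}$ at the top order $k_{1}+k_{2}=K+K'$. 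Your ansatz contains no analogue of the $c^{+}$'s or of the auxiliary term $\Theta Q$, and this also explains a discrepancy in your error bookkeeping: you truncate at total order $K$ and describe $\Psi$ as ``the truncation tail beyond total order $K$,'' whereas the paper truncates at $K+K'$ and the exponent $K+2$ in the bound on $\Psi$ comes from the $\Theta Q$ correction (nonzero only for $j+k_{1}+k_{2}\geq K+1$), not from the truncation tail, which is of the much smaller order $K+K'+2$. Without this device you either cannot close the regularity of the top-order profiles near $y=0$ or cannot justify the stated $H^{1}$ bound on $e^{\epsilon'|y|}\Psi$. (A minor further point: in your energy expansion the potential term should carry the prefactor $\lambda^{\alpha-2}=\lambda^{-2\sigma}$ rather than $\lambda^{-2}$; this is clearly a slip rather than an error of substance.)
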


\begin{proof}
See \cite{LMR,NI} for details of proofs.

We prove (i). We set
\begin{align*}
Z_1&:=\sum_{(j,k_1,k_2)\in\Sigma_{K+K'}}b^{2j}\left(\lambda^\alpha\log\lambda\right)^{k_1}\lambda^{k_2\alpha}P_{1,j,k_1,k_2}^++i\sum_{(j,k_1,k_2)\in\Sigma_{K+K'}}b^{2j+1}\left(\lambda^\alpha\log\lambda\right)^{k_1}\lambda^{k_2\alpha}P_{1,j,k_1,k_2}^-,\\
Z_2&:=\sum_{(j,k_1,k_2)\in\Sigma_{K+K'}}b^{2j}\left(\lambda^\alpha\log\lambda\right)^{k_1}\lambda^{k_2\alpha}P_{2,j,k_1,k_2}^++i\sum_{(j,k_1,k_2)\in\Sigma_{K+K'}}b^{2j+1}\left(\lambda^\alpha\log\lambda\right)^{k_1}\lambda^{k_2\alpha}P_{2,j,k_1,k_2}^-,
\end{align*}
Then $P=Q+\lambda^\alpha\log\lambda Z_1+\lambda^\alpha Z_2$ holds. Moreover, let set
\begin{align}
\label{DefTheta}
\Theta(s)&:=\sum_{(j,k)\in\Sigma_{K+K'}}b(s)^{2j}\left(\lambda(s)^\alpha\log\lambda(s)\right)^{k_1}\lambda(s)^{k_2\alpha}\left(\lambda(s)^\alpha\log\lambda(s)c^+_{1,j,k_1,k_2}+\lambda(s)^\alpha c^+_{2,j,k_1,k_2}\right),\\
\Phi&:=i\frac{\partial P}{\partial s}+\Delta P-P+f(P)-\lambda^\alpha\log\lambda\frac{1}{|y|^{2\sigma}}P-\lambda^\alpha\frac{1}{|y|^{2\sigma}}\log|y|P+\theta\frac{|y|^2}{4}P+\Theta Q,\nonumber
\end{align}
where $P_{\bullet,j,k_1,k_2}^\pm\in\mathcal{Y}'$ and $\beta_{\bullet,j,k_1,k_2},c^+_{\bullet,j,k_1,k_2}\in\mathbb{R}$ are to be determined.

As in \cite{LMR,NI}, there exist $F_{\bullet,j,k_1,k_2}^{\sigma,\pm}$, $F_{\bullet,j,k_1,k_2}^{\log,\pm}$, $F_{\bullet,j,k_1,k_2}^{\pm}$, and $\Phi$ such that 
\begin{align*}
&i\frac{\partial P}{\partial s}+\Delta P-P+f(P)-\lambda^\alpha\log\lambda\frac{1}{|y|^{2\sigma}}P-\lambda^\alpha\frac{1}{|y|^{2\sigma}}\log|y|P+\theta\frac{|y|^2}{4}P+\Theta Q\\
=&\sum_{(j,k_1,k_2)\in\Sigma_{K+K'}}b^{2j}\left(\lambda^\alpha\log\lambda\right)^{k_1+1}\lambda^{k_2\alpha}\\
&\hspace{20pt}\times\left(-L_+P_{1,j,k_1,k_2}^+-\beta_{1,j,k_1,k_2}\frac{|y|^2}{4}Q-\frac{1}{|y|^{2\sigma}}F_{1,j,k_1,k_2}^{\sigma,+}-\frac{1}{|y|^{2\sigma}}\log|y|F_{1,j,k_1,k_2}^{\log,+}+F_{1,j,k_1,k_2}^++c_{1,j,k_1,k_2}^+Q\right)\\
+&\sum_{(j,k_1,k_2)\in\Sigma_{K+K'}}b^{2j}\left(\lambda^\alpha\log\lambda\right)^{k_1}\lambda^{(k_2+1)\alpha}\\
&\hspace{20pt}\times\left(-L_+P_{2,j,k_1,k_2}^++\beta_{2,j,k_1,k_2}\frac{|y|^2}{4}Q-\frac{1}{|y|^{2\sigma}}F_{1,j,k_1,k_2}^{\sigma,+}-\frac{1}{|y|^{2\sigma}}\log|y|F_{1,j,k_1,k_2}^{\log,+}+F_{2,j,k_1,k_2}^++c_{2,j,k_1,k_2}^+Q\right)\\
+&i\sum_{(j,k_1,k_2)\in\Sigma_{K+K'}}b^{2j+1}\left(\lambda^\alpha\log\lambda\right)^{k_1+1}\lambda^{k_2\alpha}\\
&\hspace{20pt}\times\left(-L_-P_{1,j,k_1,k_2}^--\left(2j+(k_1+k_2+1)\alpha\right)P_{1,j,k_1,k_2}^+-\frac{1}{|y|^{2\sigma}}F_{1,j,k_1,k_2}^{\sigma,-}-\frac{1}{|y|^{2\sigma}}\log|y|F_{1,j,k_1,k_2}^{\log,-}+F_{1,j,k_1,k_2}^-\right)\\
+&\sum_{(j,k_1,k_2)\in\Sigma_{K+K'}}b^{2j}\left(\lambda^\alpha\log\lambda\right)^{k_1}\lambda^{(k_2+1)\alpha}\\
&\hspace{20pt}\times\left(-L_-P_{2,j,k_1,k_2}^--\left(2j+(k_1+k_2+1)\alpha\right)P_{2,j,k_1,k_2}^+-\frac{1}{|y|^{2\sigma}}F_{2,j,k_1,k_2}^{\sigma,-}-\frac{1}{|y|^{2\sigma}}\log|y|F_{2,j,k_1,k_2}^{\log,-}+F_{2,j,k_1,k_2}^-\right)\\
&\hspace{20pt}+\Phi.
\end{align*}
In particular, $F_{\bullet,j,k_1,k_2}^{\bullet,\pm}$ consists of $P_{\bullet,j',k'_1,k'_2}^{\pm}$ and $\beta_{\bullet,j',k'_1k'_2}$ for $(j',k'_1,k'_2)\in\sigma_{K+K'}$ such that $k'_2<k_2$, $k'_2\leq k'_2$ and $k'_1<k_1$, or $k'_2\leq k_2$, $k'_1\leq k_1$, and $j'<j$. Moreover,
\[
F_{1,0,0,0}^{\sigma,+}=Q,\quad F_{1,0,0,0}^{\sigma,-}=F_{2,0,0,0}^{\sigma,-}=0,\quad F_{2,0,0,0}^{\log,+}=Q,\quad F_{1,0,0,0}^{\sigma,\pm}=F_{2,0,0,0}^{\sigma,-}=0,\quad F_{\bullet,0,0,0}^\pm=0.
\]

For each $(j,k)\in\Sigma_{K+K'}$, we choose recursively $P_{j,k}^\pm\in\mathcal{Y}'$ and $\beta_{j,k},c_{j,k}^+\in\mathbb{R}$ that are solutions for the systems
\begin{empheq}[left={(S_{j,k_1,k_2})\ \empheqlbrace\ }]{align*}
&L_+P_{1,j,k_1,k_2}^++\beta_{1,j,k_1,k_2}\frac{|y|^2}{4}Q+\frac{1}{|y|^{2\sigma}}F_{1,j,k_1,k_2}^{\sigma,+}+\frac{1}{|y|^{2\sigma}}\log|y|F_{1,j,k_1,k_2}^{\log,+}-F_{1,j,k_1,k_2}^+-c_{1,j,k_1,k_2}^+Q=0,\\
&L_+P_{2,j,k_1,k_2}^+-\beta_{2,j,k_1,k_2}\frac{|y|^2}{4}Q+\frac{1}{|y|^{2\sigma}}F_{1,j,k_1,k_2}^{\sigma,+}+\frac{1}{|y|^{2\sigma}}\log|y|F_{1,j,k_1,k_2}^{\log,+}-F_{2,j,k_1,k_2}^+-c_{2,j,k_1,k_2}^+Q=0,\\
&L_-P_{1,j,k_1,k_2}^-+\left(2j+(k_1+k_2+1)\alpha\right)P_{1,j,k_1,k_2}^++\frac{1}{|y|^{2\sigma}}F_{1,j,k_1,k_2}^{\sigma,-}+\frac{1}{|y|^{2\sigma}}\log|y|F_{1,j,k_1,k_2}^{\log,-}-F_{1,j,k_1,k_2}^-=0,\\
&L_-P_{2,j,k_1,k_2}^-+\left(2j+(k_1+k_2+1)\alpha\right)P_{2,j,k_1,k_2}^++\frac{1}{|y|^{2\sigma}}F_{2,j,k_1,k_2}^{\sigma,-}+\frac{1}{|y|^{2\sigma}}\log|y|F_{2,j,k_1,k_2}^{\log,-}-F_{2,j,k_1,k_2}^-=0
\end{empheq}
and satisfy
\[
c_{\bullet,j,k_1,k_2}^+=0\ (j+k_1+k_2\leq K),\quad \frac{1}{|y|^2}P_{\bullet,0,k_1,k_2}^\pm,\frac{1}{|y|}|\nabla P_{\bullet,0,k_1,k_2}^\pm|\in L^\infty(\mathbb{R}^N)\quad(k_1+k_2=K+K').
\]
Such solutions $(P_{\bullet,j,k_1,k_2}^+,P_{\bullet,j,k_1,k_2}^-,\beta_{\bullet,j,k_1,k_2},c_{\bullet,j,k_1,k_2}^+)$ are obtained from the later Propositions \ref{theorem:Ssol}, \ref{pconti}, and \ref{Pint}.

In the same way as \cite[Proposition 2.1]{LMR}, for some sufficiently small $\epsilon'>0$, we have
\begin{align*}
\left\|e^{\epsilon'|y|}\Phi^\frac{\partial P}{\partial s}\right\|_{H^1}&\lesssim\lambda^{\alpha}|\log\lambda|\left(\left|\frac{1}{\lambda}\frac{\partial \lambda}{\partial s}+b\right|+\left|\frac{\partial b}{\partial s}+b^2-\theta\right|\right),\\
\left\|e^{\epsilon'|y|}\Phi^f\right\|_{H^1}&\lesssim\left(\lambda^\alpha|\log\lambda|\right)^{K+K'+2},\\
\left\|e^{\epsilon'|y|}\Phi^{>K+K'}\right\|_{H^1}&\lesssim\left(b^2+\lambda^{\alpha}|\log\lambda|\right)^{K+K'+2}.
\end{align*}
Moreover,
\[
\left\|e^{\epsilon'|y|}\Theta Q\right\|_{H^1}\lesssim \left(b^2+\lambda^{\alpha}|\log\lambda|\right)^{K+2}
\]
holds. Therefore, we have
\[
\left\|e^{\epsilon'|y|}\Psi\right\|_{H^1}\lesssim \lambda^{\alpha}|\log\lambda|\left(\left|\frac{1}{\lambda}\frac{\partial \lambda}{\partial s}+b\right|+\left|\frac{\partial b}{\partial s}+b^2-\theta\right|\right)+\left(b^2+\lambda^{\alpha}\right)^{K+2},
\]
where $\Psi:=\Phi-\Theta Q$.

The rest is the same as in \cite{LMR,NI}.
\end{proof}

In the rest of this section, we construct solutions $(P_{j,k}^+,P_{j,k}^-,\beta_{j,k},c_{j,k}^+)\in{\mathcal{Y}'}^2\times\mathbb{R}^2$ for systems $(S_{j,k})$ in the proof of Proposition \ref{theorem:constprof}.

\begin{proposition}
\label{theorem:Ssol}
The system $(S_{j,k_1,k_2})$ has a solution $(P_{\bullet,j,k_1,k_2}^+,P_{\bullet,j,k_1,k_2}^-,\beta_{\bullet,j,k_1,k_2},c_{\bullet,j,k_1,k_2}^+)\in\mathcal{Y}^2\times\mathbb{R}^2$.
\end{proposition}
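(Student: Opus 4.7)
The plan is to induct on triples $(j,k_1,k_2) \in \Sigma_{K+K'}$ using the ordering identified in the proof of Proposition \ref{theorem:constprof}: each source $F^{\sigma,\pm}_{\bullet,j,k_1,k_2}$, $F^{\log,\pm}_{\bullet,j,k_1,k_2}$, and $F^\pm_{\bullet,j,k_1,k_2}$ is a polynomial in previously constructed $P^\pm_{\bullet,j',k'_1,k'_2}$ with $(j',k'_1,k'_2)$ strictly earlier, so the inductive hypothesis already supplies every inhomogeneity. Within a single level $(j,k_1,k_2)$, I would first solve the two $L_+$-equations for $(P^+_{\bullet,j,k_1,k_2}, \beta_{\bullet,j,k_1,k_2}, c^+_{\bullet,j,k_1,k_2})$, and only then feed the resulting $P^+_\bullet$ into the two $L_-$-equations determining $P^-_{\bullet,j,k_1,k_2}$.

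For the $L_+$ stage I would use that $L_+|_{H^1_\mathrm{rad}(\mathbb{R}^N)}$ is invertible, since its full kernel $\mathrm{span}\{\partial_i Q\}$ lies in the non-radial sector; any radial source in $L^2$ therefore has a unique radial preimage. The scalar $\beta_{\bullet,j,k_1,k_2}$ is fixed so that $P^+_{\bullet,j,k_1,k_2}$ satisfies the orthogonality condition required in the $L_-$ stage below (of the form $(P^+_{\bullet,j,k_1,k_2}, Q)_2 = 0$ or an equivalent pairing). This condition depends linearly on $\beta_\bullet$, because the contribution of the $\mp\beta_\bullet |y|^2 Q/4$ term in the source is $\mp\beta_\bullet \rho/4$ (using $L_+ \rho = |y|^2 Q$), and the standard non-degeneracy $(\rho, Q)_2 \neq 0$ yields a unique $\beta_\bullet$. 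The scalar $c^+_{\bullet,j,k_1,k_2}$ is set to $0$ when $j+k_1+k_2 \leq K$, as required; for $j+k_1+k_2 > K$ it is activated to enforce the auxiliary pointwise bounds $\frac{1}{|y|^2} P^\pm_{\bullet,0,k_1,k_2}, \frac{1}{|y|}|\nabla P^\pm_{\bullet,0,k_1,k_2}| \in L^\infty$ demanded at $k_1 + k_2 = K + K'$.

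For the $L_-$ stage, $\ker L_-|_{H^1_\mathrm{rad}} = \mathrm{span}\{Q\}$, so Fredholm solvability requires the $L^2$-pairing of the right-hand side with $Q$ to vanish. Expanding this pairing and invoking $L_- Q = 0$, the Stage-1 choice of $\beta_\bullet$, and the explicit base-case identities $F^{\sigma,+}_{1,0,0,0} = Q$, $F^{\log,+}_{2,0,0,0} = Q$, and $F^{\sigma,\pm}_{\bullet,0,0,0} = 0$ otherwise stated after the system, the vanishing reduces to an algebraic identity that holds by the recursive definition of the $F$'s. Once confirmed, $P^-_{\bullet,j,k_1,k_2}$ is determined uniquely in $\{Q\}^\perp$.

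The main obstacle I expect is verifying membership of the resulting $P^\pm_{\bullet,j,k_1,k_2}$ in $\mathcal{Y}$: continuity at the origin despite the singular forcings $\frac{1}{|y|^{2\sigma}} F^{\sigma,\pm}$ and $\frac{1}{|y|^{2\sigma}} \log|y| F^{\log,\pm}$, together with the decay $|\partial^\alpha P(y)| \lesssim (1+|y|)^{\kappa_\alpha} Q(y)$ for $|y| \geq 1$. Near the origin, one reduces $L_\pm u = g$ to radial ODEs in $r = |y|$ and performs a Frobenius-type expansion; since $\sigma < 1$, the solution takes the form $(\text{smooth}) + |y|^{2-2\sigma}(\log|y|)^\ell \cdot (\text{smooth})$, which extends continuously to $y = 0$. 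Away from the origin, the sources inherit polynomial-times-$Q$ decay from the inductive hypothesis, and this decay is propagated to the solution by the exponential-type coercivity of $L_\pm$ outside a large ball and the exponential decay of $Q$; smoothness on $\mathbb{R}^N \setminus \{0\}$ follows from standard elliptic regularity applied to $L_\pm u = g$.
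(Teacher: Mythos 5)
Your proposal is correct and follows essentially the same route as the paper: induction over $\Sigma_{K+K'}$ in the order forced by the recursion of the $F$'s, radial invertibility of $L_+$ for the $+$ equations, choice of $\beta_{\bullet,j,k_1,k_2}$ to enforce the Fredholm condition $\perp Q$ for the $L_-$ equations, $c^+$ reserved for the origin normalization, and an ODE/Frobenius analysis plus decay propagation to get membership in $\mathcal{Y}$. Your non-degeneracy $(\rho,Q)_2\neq 0$ is exactly the paper's computation $\langle L_+\tilde{P}^+,\Lambda Q\rangle$ in disguise (both equal $\tfrac{1}{2}\||y|Q\|_2^2$ up to sign via $L_+\Lambda Q=-2Q$), and the paper's intermediate systems $(\tilde S_{j,k_1,k_2})$, $(S'_{j,k_1,k_2})$ are just a bookkeeping device for the $c^\pm$ corrections you absorb directly.
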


\begin{proof}
We solve
\begin{empheq}[left={(S_{j,k_1,k_2})\ \empheqlbrace\ }]{align*}
&L_+P_{1,j,k_1,k_2}^++\beta_{1,j,k_1,k_2}\frac{|y|^2}{4}Q+\frac{1}{|y|^{2\sigma}}F_{1,j,k_1,k_2}^{\sigma,+}+\frac{1}{|y|^{2\sigma}}\log|y|F_{1,j,k_1,k_2}^{\log,+}-F_{1,j,k_1,k_2}^+-c_{1,j,k_1,k_2}^+Q=0,\\
&L_+P_{2,j,k_1,k_2}^+-\beta_{2,j,k_1,k_2}\frac{|y|^2}{4}Q+\frac{1}{|y|^{2\sigma}}F_{1,j,k_1,k_2}^{\sigma,+}+\frac{1}{|y|^{2\sigma}}\log|y|F_{1,j,k_1,k_2}^{\log,+}-F_{2,j,k_1,k_2}^+-c_{2,j,k_1,k_2}^+Q=0,\\
&L_-P_{1,j,k_1,k_2}^-+\left(2j+(k_1+k_2+1)\alpha\right)P_{1,j,k_1,k_2}^++\frac{1}{|y|^{2\sigma}}F_{1,j,k_1,k_2}^{\sigma,-}+\frac{1}{|y|^{2\sigma}}\log|y|F_{1,j,k_1,k_2}^{\log,-}-F_{1,j,k_1,k_2}^-=0,\\
&L_-P_{2,j,k_1,k_2}^-+\left(2j+(k_1+k_2+1)\alpha\right)P_{2,j,k_1,k_2}^++\frac{1}{|y|^{2\sigma}}F_{1,j,k_1,k_2}^{\sigma,-}+\frac{1}{|y|^{2\sigma}}\log|y|F_{1,j,k_1,k_2}^{\log,-}-F_{2,j,k_1,k_2}^+=0.
\end{empheq}
For $(S_{j,k_1,k_2})$, we consider the following two systems:
\begin{empheq}[left={(\tilde{S}_{j,k_1,k_2})\ \empheqlbrace\ }]{align*}
&L_+\tilde{P}_{1,j,k_1,k_2}^++\beta_{1,j,k_1,k_2}\frac{|y|^2}{4}Q+\frac{1}{|y|^{2\sigma}}F_{1,j,k_1,k_2}^{\sigma,+}+\frac{1}{|y|^{2\sigma}}\log|y|F_{1,j,k_1,k_2}^{\log,+}-F_{1,j,k_1,k_2}^+=0,\\
&L_+\tilde{P}_{2,j,k_1,k_2}^+-\beta_{2,j,k_1,k_2}\frac{|y|^2}{4}Q+\frac{1}{|y|^{2\sigma}}F_{1,j,k_1,k_2}^{\sigma,+}+\frac{1}{|y|^{2\sigma}}\log|y|F_{1,j,k_1,k_2}^{\log,+}-F_{2,j,k_1,k_2}^+=0,\\
&L_-\tilde{P}_{1,j,k_1,k_2}^-+\left(2j+(k_1+k_2+1)\alpha\right)\tilde{P}_{1,j,k_1,k_2}^++\frac{1}{|y|^{2\sigma}}F_{1,j,k_1,k_2}^{\sigma,-}+\frac{1}{|y|^{2\sigma}}\log|y|F_{1,j,k_1,k_2}^{\log,-}-F_{1,j,k_1,k_2}^-=0,\\
&L_-\tilde{P}_{2,j,k_1,k_2}^-+\left(2j+(k_1+k_2+1)\alpha\right)\tilde{P}_{2,j,k_1,k_2}^++\frac{1}{|y|^{2\sigma}}F_{1,j,k_1,k_2}^{\sigma,-}+\frac{1}{|y|^{2\sigma}}\log|y|F_{1,j,k_1,k_2}^{\log,-}-F_{2,j,k_1,k_2}^+=0
\end{empheq}
and
\begin{empheq}[left={(S'_{j,k_1,k_2})\ \empheqlbrace\ }]{align*}
&P_{\bullet,j,k_1,k_2}^+=\tilde{P}_{\bullet,j,k_1,k_2}^+-\frac{c_{\bullet,j,k_1,k_2}^+}{2}\Lambda Q,\\
&P_{\bullet,j,k_1,k_2}^-=\tilde{P}_{\bullet,j,k_1,k_2}^--c_{\bullet,j,k_1,k_2}^-Q-\frac{(2j+(k_1+k_2+1)\alpha)c_{\bullet,j,k_1,k_2}^+}{8}|y|^2Q.
\end{empheq}
Then by applying $(S'_{j,k_1,k_2})$ to a solution for $(\tilde{S}_{j,k_1,k_2})$, we obtain a solution for $(S_{j,k_1,k_2})$.

Firstly, we solve
\begin{empheq}[left={(\tilde{S}_{0,0,0})\ \empheqlbrace\ }]{align*}
&L_+\tilde{P}_{1,0,0,0}^++\beta_{1,0,0,0}\frac{|y|^2}{4}Q+\frac{1}{|y|^{2\sigma}}Q=0,\\
&L_+\tilde{P}_{2,0,0,0}^+-\beta_{2,0,0,0}\frac{|y|^2}{4}Q+\frac{1}{|y|^{2\sigma}}\log|y|Q=0,\\
&L_-\tilde{P}_{\bullet,0,0,0}^-+\alpha \tilde{P}_{\bullet,0,0,0}^+=0.
\end{empheq}
For any $\beta_{1,0,0,0}\in\mathbb{R}$, there exists a solution $\tilde{P}_{1,0,0,0}^+\in\mathcal{Y}$. Let
\[
\beta_{1,0,0,0}:=\frac{4\sigma\||\cdot|^{-\sigma}Q\|_2^2}{\||\cdot|Q\|_2^2}.
\]
Then since
\begin{align*}
\left(\tilde{P}_{1,0,0,0}^+,Q\right)_2=-\frac{1}{2}\left\langle L_+\tilde{P}_{1,0,0,0}^+,\Lambda Q\right\rangle=-\frac{1}{2}\left(\frac{\beta_{0,0}}{4}\||\cdot|Q\|_2^2-\sigma\||\cdot|^{-\sigma}Q\|_2^2\right)=0,
\end{align*}
there exists a solution $\tilde{P}_{1,0,0,0}^-\in\mathcal{Y}$. By taking $c_{1,0,0,0}^+=0$, we obtain a solution $(P_{1,0,0,0}^+,P_{1,0,0,0}^-,\beta_{1,0,0,0},c_{1,0,0,0}^+)\in\mathcal{Y}^2\times\mathbb{R}^2$. Similarly, we obtain a solution $(P_{2,0,0,0}^+,P_{2,0,0,0}^-,\beta_{2,0,0,0},c_{2,0,0,0}^+)\in\mathcal{Y}^2\times\mathbb{R}^2$. Here, let $H(j_0,k_{1,0},k_{2,0})$ denote by that
\begin{align*}
&\forall (j,k_1,k_2)\in\Sigma_{K+K'},\ k_2<k_{2,0}\ \mbox{or}\ (k_2=k_{2,0}\ \mbox{and}\ k_1<k_{1,0})\ \mbox{or}\ (k_2=k_{2,0}\ \mbox{and}\ k_1=k_{1,0}\ \mbox{and}\ j<j_0)\\
&\hspace{100pt}\Rightarrow (S_{j,k_1,k_2})\ \mbox{has a solution}\ (P_{\bullet,j,k_1,k_2}^+,P_{\bullet,j,k_1,k_2}^-,\beta_{\bullet,j,k_1,k_2},c_{\bullet,j,k_1,k_2}^+)\in\mathcal{Y}^2\times\mathbb{R}^2.
\end{align*}
From the above discuss, $H(1,0,0)$ is true. If $H(j_0,k_{1,0},k_{2,0})$ is true, then $F_{\bullet,j_0,k_{1,0},k_{2,0}}^\pm$ is defined and belongs to $\mathcal{Y}$. Moreover, for any $\beta_{\bullet,j_0,k_{1,0},k_{2,0}}$, there exists a solution $\tilde{P}_{\bullet,j_0,k_{1,0},k_{2,0}}^+$. Let be $\beta_{\bullet,j_0,k_{1,0},k_{2,0}}$ such that
\[
\left\langle\left(2j+(k_1+k_2+1)\alpha\right)\tilde{P}_{\bullet,j,k_1,k_2}^++\frac{1}{|y|^{2\sigma}}F_{\bullet,j,k_1,k_2}^{\sigma,-}+\frac{1}{|y|^{2\sigma}}\log|y|F_{\bullet,j,k_1,k_2}^{\log,-}-F_{\bullet,j,k_1,k_2}^-,Q\right\rangle=0.
\]
Then we obtain a solution $\tilde{P}_{\bullet,j_0,k_{1,0},k_{2,0}}^-$. Here, we define
\begin{align*}
c_{\bullet,j_0,k_{1,0},k_{2,0}}^-&:=\left\{
\begin{array}{cc}
\frac{\tilde{P}_{\bullet,j_0,k_{1,0},k_{2,0}}^-(0)}{Q(0)}&(j_0+k_{1,0}+k_{2,0}\neq K+1),\\
0&(j_0+k_{1,0}+k_{2,0}=K+1,\mbox{\ and\ }\tilde{P}_{j_0,k_{1,0},k_{2,0}}^-(0)\neq 0),\\
1&(j_0+k_{1,0}+k_{2,0}=K+1,\mbox{\ and\ } \tilde{P}_{j_0,k_{1,0},k_{2,0}}^-(0)=0),
\end{array}\right.\\
c_{\bullet,j_0,k_{1,0},k_{2,0}}^+&:=\left\{
\begin{array}{cc}
0&(j_0+k_{1,0}+k_{2,0}\leq K),\\
0&(j_0+k_{1,0}+k_{2,0}=K+1,\mbox{\ and\ }\tilde{P}_{j_0,k_{1,0},k_{2,0}}^+(0)\neq 0),\\
1&(j_0+k_{1,0}+k_{2,0}=K+1,\mbox{\ and\ }\tilde{P}_{j_0,k_{1,0},k_{2,0}}^+(0)=0),\\
\frac{2\tilde{P}_{\bullet,j_0,k_{1,0},k_{2,0}}^+(0)}{Q(0)}&(j_0+k_{1,0}+k_{2,0}\geq K+2).
\end{array}\right.
\end{align*}
Then we obtain a solution for $(S_{j_0,k_{1,0},k_{2,0}})$. This means that $H(j_0+1,k_{1,0},k_{2,0})$ is true if $j_0+k_{1,0}+k_{2,0}\leq K+K'-1$, $H(0,k_{1,0}+1,k_{2,0})$ is true if $j_0+k_{1,0}+k_{2,0}=K+K'$ and $k_{1,0}+k_{2,0}\leq K+K'-1$, and $H(0,0,k_{2,0}+1)$ is true if $j_0+k_{1,0}+k_{2,0}=k_{1,0}+k_{2,0}=K+K'$. In particular, $H(0,0,K+K'+1)$ means that for any $(j,k_1,k_2)\in\Sigma_{K+K'}$, there exists a solution $(P_{\bullet,j,k_1,k_2}^+,P_{\bullet,j,k_1,k_2}^-,\beta_{\bullet,j,k_1,k_2},c_{\bullet,j,k_1,k_2}^+)\in\mathcal{Y}^2\times\mathbb{R}^2$.

Furthermore,  $P_{\bullet,j,k_1,k_2}^\pm(0)\neq 0$ for $j+k_1+k_2=K+1$ and $P_{\bullet,j,k_1,k_2}^\pm(0)=0$ for $j+k_1+k_2\geq K+2$ hold.
\end{proof}

\begin{proposition}
\label{pconti}
For $P_{\bullet,j,k_1,k_2}^\pm$,
\[
\Lambda P_{\bullet,j,k_1,k_2}^\pm\in H^1(\mathbb{R}^N)\cap C(\mathbb{R}^N).
\]
Namely, $P_{\bullet,j,k_1,k_2}^\pm\in\mathcal{Y}'$.
\end{proposition}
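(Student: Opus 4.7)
My plan is to commute the scaling generator $\Lambda$ through the elliptic systems $(S_{j,k_1,k_2})$ satisfied by the profiles constructed in Proposition \ref{theorem:Ssol} and then invoke elliptic regularity. Fix $(j,k_1,k_2) \in \Sigma_{K+K'}$ and write $P = P_{\bullet,j,k_1,k_2}^\pm \in \mathcal{Y}$. From $(S_{j,k_1,k_2})$, $P$ satisfies an equation of the form
\[
L_\pm P = R_{\mathrm{reg}} + \frac{1}{|y|^{2\sigma}} G_1 + \frac{1}{|y|^{2\sigma}}\log|y|\, G_2,
\]
where $R_{\mathrm{reg}}, G_1, G_2 \in \mathcal{Y}$ are built from previously constructed lower-index profiles. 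Since \eqref{index2} gives $\sigma < N/4$, the weights $|y|^{-2\sigma}$ and $|y|^{-2\sigma}\log|y|$ are locally in $L^2$, so the right-hand side lies in $L^2(\mathbb{R}^N)$ and standard elliptic regularity already places $P$ in $H^2_{\mathrm{loc}}$.

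Next I would derive an equation for $\Lambda P$ using the commutators $[\Lambda, -\Delta] = -2\Delta$ and $[\Lambda, V] = y \cdot \nabla V$ for a multiplication operator $V$. Substituting $-2\Delta P = -2 L_\pm P + (\text{lower-order smooth terms})$ back via the original equation, one obtains $L_\pm \Lambda P = \tilde R$ whose right-hand side has the \emph{same} structural form as $R$, because $\Lambda$ acts on $|y|^{-2\sigma}$ by multiplication by the constant $-2\sigma$:
\[
\Lambda\!\left(\frac{G}{|y|^{2\sigma}}\right) = \frac{-2\sigma\, G}{|y|^{2\sigma}} + \frac{\Lambda G}{|y|^{2\sigma}},
\quad
\Lambda\!\left(\frac{\log|y|\, G}{|y|^{2\sigma}}\right) = \frac{(-2\sigma \log|y| + 1)G + \log|y|\, \Lambda G}{|y|^{2\sigma}}.
\]
Under the inductive hypothesis that strictly lower-index profiles belong to $\mathcal{Y}'$, we have $\Lambda G_i \in H^1 \cap C$, so $\tilde R \in L^2(\mathbb{R}^N)$. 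Elliptic regularity applied to $L_\pm \Lambda P = \tilde R$, together with the exponential decay of $\Lambda P$ at infinity inherited from $P \in \mathcal{Y}$, then yields $\Lambda P \in H^1(\mathbb{R}^N)$.

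For continuity of $\Lambda P$ on $\mathbb{R}^N$, away from the origin $P$ is smooth by classical elliptic regularity (smooth coefficients and smooth source), hence so is $\Lambda P$. At the origin I would exploit the fact that $P$ is radial and use a Frobenius-type analysis of the radial ODE $-P'' - \tfrac{N-1}{r} P' + (\text{lower order}) = R(r)$, whose forcing behaves like $r^{-2\sigma}(\log r)^m$ near $r = 0$; a particular solution is $\sim r^{2-2\sigma}(\log r)^m$, so $P$ admits an expansion
\[
P(y) = c_0 + \sum_{\ell} c_\ell |y|^{2-2\sigma}(\log|y|)^\ell + o(|y|^{2-2\sigma}) \quad \text{as } y \to 0.
\]
Since $\Lambda(|y|^a (\log|y|)^\ell) = (\tfrac{N}{2} + a)|y|^a(\log|y|)^\ell + \ell |y|^a (\log|y|)^{\ell-1}$ and $2 - 2\sigma > 0$, the tail terms vanish at the origin, giving $\Lambda P(0^+) = \tfrac{N}{2} c_0$ and thus continuity.

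The main obstacle is the bookkeeping: carrying out the induction on $(j,k_1,k_2)$ while tracking both the polynomial prefactors $|y|^{k_2\alpha}$ and the logarithmic corrections that accumulate in the Frobenius expansion at each step. Since $F^{\bullet,\pm}_{\bullet,j,k_1,k_2}$ depends only on strictly smaller indices (already in $\mathcal{Y}'$ by induction), the argument runs cleanly and parallels the analysis of \cite{LMR, NI}.
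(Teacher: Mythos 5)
The paper offers no argument for this proposition at all (its ``proof'' is the single line ``For the proof, see \cite{NI}''), so your proposal cannot be compared against an in-paper argument; judged on its own, it is a sound and essentially standard route, and it is consistent with the local analysis the paper does carry out in Proposition \ref{Pint} (the Frobenius-type expansion of the radial equation near $r=0$, where the leading correction is indeed of size $r^{2-2\sigma}$ up to logarithms, matching your $|y|^{2-2\sigma}(\log|y|)^\ell$ tail). The key points are all present: $4\sigma<N$ from \eqref{index2} makes $|y|^{-2\sigma}$ and $|y|^{-2\sigma}\log|y|$ locally square-integrable, the commutators $\Lambda(|y|^{-2\sigma}G)=|y|^{-2\sigma}(\Lambda G-2\sigma G)$ preserve the structure of the source, and the $H^1$-exclusion of the singular homogeneous solution $r^{2-N}$ pins down the bounded branch so that $\Lambda P(0^+)=\tfrac{N}{2}P(0)$. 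Two small corrections: the commutator identity should read $[\Lambda,\Delta]=-2\Delta$, equivalently $[\Lambda,-\Delta]=2\Delta$ (you wrote $[\Lambda,-\Delta]=-2\Delta$, a sign slip that only affects a lower-order coefficient); and the induction cannot be phrased purely on ``strictly lower indices,'' since the equation for $P^-_{\bullet,j,k_1,k_2}$ contains $P^+_{\bullet,j,k_1,k_2}$ at the \emph{same} index, so within each $(j,k_1,k_2)$ you must treat the $+$ component before the $-$ component. With those adjustments the argument closes.
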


\begin{proof}
For the proof, see \cite{NI}.
\end{proof}

\begin{proposition}
\label{Pint}
For $P_{\bullet,0,k_1,k_2}^{\pm}$ with $k_1+k_2=K+K'$,
\[
\frac{1}{r^2}P_{\bullet,0,k_1,k_2}^\pm,\frac{1}{r}\frac{\partial P_{\bullet,0,k_1,k_2}^\pm}{\partial r}\in L^\infty(\mathbb{R}^N),
\]
where $r=|y|$.
\end{proposition}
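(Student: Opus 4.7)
The plan is to split $\mathbb{R}^N$ into the exterior $\{|y|\geq 1\}$, where exponential decay from $\mathcal{Y}'$ gives the bounds for free, and a neighborhood of $r=0$, where a Frobenius-type analysis of the radial ODE is required. Since $P_{\bullet,0,k_1,k_2}^{\pm}\in\mathcal{Y}'$ satisfies $|P(y)|+|\nabla P(y)|\lesssim (1+|y|)^{\kappa} Q(y)$ with $Q$ decaying exponentially, both $r^{-2}P$ and $r^{-1}\partial_r P$ are automatically bounded on $\{r\geq 1\}$, so the whole issue is to show $P(r)=O(r^{2})$ and $\partial_r P(r)=O(r)$ as $r\to 0$.

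Writing $(S_{0,k_1,k_2})$ in radial form gives
\[
-P'' -\frac{N-1}{r}P' + V_\pm(r)\,P = G_\pm(r),
\]
with $V_\pm\in C^\infty([0,\infty))$ and $G_\pm$ a finite sum of terms of the form $r^{-2\sigma}H(r)$ and $r^{-2\sigma}\log r\cdot H(r)$, where each $H$ is a smooth radial function built, via the recursive definition of $F_{\bullet,0,k_1,k_2}^{\sigma,\pm}$ and $F_{\bullet,0,k_1,k_2}^{\log,\pm}$, from the already-constructed $P$'s at strictly previous multi-indices. I would then proceed in four steps: (i) invoke that the choices of $c^{+}_{\bullet,0,k_1,k_2}$ and $c^{-}_{\bullet,0,k_1,k_2}$ in the proof of Proposition \ref{theorem:Ssol} force $P^{+}(0)=P^{-}(0)=0$ once $k_1+k_2\geq K+2$, which certainly holds at $k_1+k_2=K+K'$ provided $K'\geq 2$; (ii) use radiality to get $\partial_r P(0)=0$ for free; (iii) expand $H(r)=H(0)+O(r^2)$ and apply the variation-of-parameters formula against the regular/singular Frobenius basis $\{1+O(r^2),\,r^{2-N}(1+O(r^2))\}$ of the homogeneous operator, which yields the local expansion $P(r)=A r^{2-2\sigma}+B r^{2-2\sigma}\log r + O(r^2)$; (iv) show that the singular coefficients $A,B$ vanish at the deepest level $k_1+k_2=K+K'$ by virtue of the free parameters $\beta_{\bullet,0,k_1,k_2}$, together with the identity $L_+\rho=|y|^2 Q$, which allows a homogeneous correction to kill the $r^{2-2\sigma}$ profile. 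Once this leading singular term is removed, what remains is $P=O(r^2)$, and differentiation in $r$ gives $\partial_r P=O(r)$.

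The main obstacle I expect is step (iv): verifying that $A$ and $B$ actually vanish at the top level. This reduces to orthogonality identities for the sources $F_{\bullet,0,k_1,k_2}^{\sigma,\pm}$ and $F_{\bullet,0,k_1,k_2}^{\log,\pm}$ against $Q$ and $|y|^2 Q$, which have to be tracked down the recursion through both the $P_{1,\bullet}^{\pm}$ and $P_{2,\bullet}^{\pm}$ families (corresponding to $\lambda^\alpha\log\lambda$ and $\lambda^\alpha$). This is the analogue of the computation in \cite{NI}, and the same mechanism should carry over; the logarithmic factors $r^{-2\sigma}\log r$ introduce no genuinely new difficulty since the associated Frobenius corrections can be generated by differentiating the $r^{-2\sigma}$ analysis with respect to $\sigma$. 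Combining the exterior and interior estimates then yields the stated $L^\infty$ bounds.
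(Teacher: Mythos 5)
There is a genuine gap, concentrated in your steps (iii)--(iv). You model the source at the top level as $r^{-2\sigma}H(r)$ and $r^{-2\sigma}\log r\,H(r)$ with $H$ \emph{smooth}, expand $H(r)=H(0)+O(r^2)$, and then try to remove the resulting singular profiles $Ar^{2-2\sigma}+Br^{2-2\sigma}\log r$ using the free constants $\beta_{\bullet,0,k_1,k_2}$ and homogeneous corrections built from $L_+\rho=|y|^2Q$. This cannot work. The coefficient of the $r^{2-2\sigma}$ profile in a particular solution of $-P''-\frac{N-1}{r}P'+\dots=r^{-2\sigma}H$ is determined \emph{locally} by $H(0)$ (it is $H(0)/\bigl((2-2\sigma)(N-2\sigma)\bigr)$, exactly the denominator appearing in the paper's recursion for $C_{k_1,k_2}$); the orthogonality conditions against $Q$ and the constants $\beta$ only govern global solvability and decay, and are in any case already spent ensuring the $L_-$ equations are solvable. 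Adding multiples of $Q$, $\Lambda Q$, $|y|^2Q$, or $\rho$ changes $P$ near the origin by $C+O(r^2)$, which cannot cancel a fractional power $r^{2-2\sigma}$. The only way to make $A=B=0$ is to make the source itself vanish at the origin, and that is precisely what your setup does not provide: the functions $H$ (which are the previously constructed $P_{\bullet,0,k_1',k_2'}^{\pm}$) are \emph{not} smooth at $r=0$ and do not admit an $H(0)+O(r^2)$ expansion; they carry their own fractional-power behavior.

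The missing idea is the level-by-level propagation of vanishing at the origin, which is the entire content of the paper's proof. Writing $f_{k_1,k_2}=P_{1,0,k_1,k_2}^+$ and using that the source for $f_{k_1,k_2}$ is $r^{-2\sigma}f_{k_1-1,k_2}+r^{-2\sigma}\log r\,f_{k_1,k_2-1}+(\mathrm{regular})$, one shows by induction on $k=k_1+k_2-K-1$ that $f_{k_1,k_2}(r)\sim C_{k_1,k_2}\,r^{2\sigma_k}(\log r)^{\sigma'_k}$ as $r\to 0$, where each step of the recursion improves the vanishing exponent by $2-2\sigma$ until it saturates at $2$ (this is the role of the rule $\sigma_{k+1}=1-\sigma+\sigma_k$ for $\sigma_k<\sigma$ and $\sigma_{k+1}=1$ otherwise, cf.\ \eqref{origindecay}). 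In particular, at intermediate levels the profiles genuinely behave like $r^{2k(1-\sigma)}$ with nonzero coefficient --- nothing is ``killed'' --- and the $O(r^2)$ vanishing claimed in Proposition \ref{Pint} is only reached at the top level after enough iterations. This also shows that your condition ``$K'\geq 2$'' is not the right threshold: one needs $K'$ large depending on $\sigma$ (roughly $K'(1-\sigma)\geq\sigma+1$) so that $\sigma_{K'-1}=1$, which is why the proposition is stated only for $k_1+k_2=K+K'$ with $K'$ sufficiently large. Your exterior estimate on $\{r\geq 1\}$ via the $\mathcal{Y}$ decay and the observation $P_{\bullet,0,k_1,k_2}^{\pm}(0)=0$ for $j+k_1+k_2\geq K+2$ are both correct and consistent with the paper.
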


\begin{proof}
We prove only for $P_{1,0,k_1,k_2}^+$. See \cite{LMR,NI} for details of proofs.

Let $f_{k_1,k_2}:=P_{1,0,k_1,k_2}^+$ for $k_1,k_2\in\mathbb{N}_0$. Then $f_{k_1,k_2}(0)\neq0$ if $k_1+k_2=K+1$ and $f_{k_1,k_2}(0)=0$ for $k_1+k_2\geq K+2$ hold. Moreover, Let
\[
F_{k_1,k_2}:=f_{k_1,k_2}-\left(\frac{4}{N}+1\right)Q^\frac{4}{N}f_{k_1,k_2}+\beta_{1,0,k_1,k_2}\frac{|y|^2}{4}Q-F_{1,0,k_1,k_2}^+-c_{1,0,k_1,k_2}^+Q.
\]
Then
\begin{align}
\label{fkeq}
\frac{1}{r^{N-1}}\frac{\partial}{\partial r}\left(r^{N-1}\frac{\partial f_{k_1,k_2}}{\partial r}\right)=\frac{1}{r^{2\sigma}}f_{k_1-1,k_2}+\frac{\log r}{r^{2\sigma}}f_{k_1,k_2-1}+F_{k_1,k_2}
\end{align}
holds. If $k_2=0$, then we obtain conclusion for $P_{1,0,K+K',0}^+$ as in \cite{NI}.

If $r^{-q}(\log r)^{q'}f_{k_1-1,k_2}$ and $r^{-q}(\log r)^{q'}f_{k_1,k_2-1}$ converge to non-zero as $r\rightarrow+0$ for some $q\in[0,2\sigma]$ and $q'\geq 0$ or $r^{-q}(\log r)^{q'}f_{k_1-1,k_2}$ and $q'\geq 0$ or $r^{-q}(\log r)^{q'}f_{k_1,k_2-1}$ converge as $r\rightarrow+0$ for some $q>2\sigma$ and $q'\geq0$, then $r^{N-1}\frac{\partial f_{k+1}}{\partial r}$ converges to $0$ as $r\rightarrow+0$.

Let $\sigma_0=\sigma'_0:=0$ and $C_{k_1,k_2}:=f_{k_1,k_2}(0)$ for $k_1+k_2=K+1$. Moreover, let
\begin{align*}
k&:=k_1+k_2-K-1,&\sigma_{k+1}&:=\left\{
\begin{array}{cc}
1-\sigma+\sigma_k&\left(\sigma_k<\sigma\right)\\
1&\left(\sigma_k\geq\sigma\right)
\end{array}\right.,\\
\sigma'_k&:=\left\{
\begin{array}{cc}
\sigma'_{k-1}+1&\left(\sigma_{k-1}<\sigma\right)\\
0&\left(\sigma_{k-1}\geq \sigma\right)
\end{array}\right.,&C_{k_1,k_2}&:=\left\{
\begin{array}{cc}
\frac{C_{k_1,k_2-1}}{2\sigma_k(N-2(\sigma-\sigma_{k-1}))}&\left(\sigma_{k-1}<\sigma\right)\\
\frac{0^{2(\sigma'_{k-1}-\sigma)}C_{k_1,k_2-1}+F_{k_1,k_2}(0)}{2N}&\left(\sigma_{k-1}=\sigma\right)\\
\frac{F_{k_1,k_2}(0)}{2N}&\left(\sigma_{k-1}>\sigma\right)
\end{array}\right..
\end{align*}
In particular, if $\sigma_{k-1}<\sigma$, then $C_{k_1,k_2}\neq 0$. Then
\begin{eqnarray}
\label{origindecay}
\lim_{r\rightarrow+0}\frac{1}{r^{2\sigma_k}(\log r)^{\sigma'_k}}f_{k_1,k_2}(r)=C_{k_1,k_2}
\end{eqnarray}
holds. For $k=0$, it clearly holds. Moreover, for $k\geq 1$,
\[
\lim_{r\rightarrow+0}\frac{1}{r^{2\sigma_k-1}(\log r)^{\sigma'_k}}\frac{\partial f_{k_1,k_2}}{\partial r}(r)=2\sigma_kC_{k_1,k_2}
\]
holds. Consequently, we obtain Proposition \ref{Pint} if $K'$ is sufficiently large.
\end{proof}

\section{Decomposition of functions}
\label{sec:decomposition}
The parameters $\tilde{\lambda},\tilde{b},\tilde{\gamma}$ to be used for modulation are obtained by the following lemma:

\begin{lemma}[Decomposition]
\label{decomposition}
For any sufficiently small constants $\overline{l},\overline{\epsilon}>0$, there exist a sufficiently small $\delta>0$ such that the following statement holds. Let $I$ be an interval. We assume that $u\in C(I,H^1(\mathbb{R}^N))\cap C^1(I,H^{-1}(\mathbb{R}^N))$ satisfies
\[
\forall\ t\in I,\ \left\|\lambda(t)^{\frac{N}{2}}u\left(t,\lambda(t)y\right)e^{i\gamma(t)}-Q\right\|_{H^1}< \delta
\]
for some functions $\lambda:I\rightarrow(0,\overline{l})$ and $\gamma:I\rightarrow\mathbb{R}$. Then there exist unique functions $\tilde{\lambda}:I\rightarrow(0,\infty)$, $\tilde{b}:I\rightarrow\mathbb{R}$, and $\tilde{\gamma}:I\rightarrow\mathbb{R}\slash 2\pi\mathbb{Z}$ such that 
\begin{align}
\label{mod}
&u(t,x)=\frac{1}{\tilde{\lambda}(t)^{\frac{N}{2}}}\left(P+\tilde{\varepsilon}\right)\left(t,\frac{x}{\tilde{\lambda}(t)}\right)e^{-i\frac{\tilde{b}(t)}{4}\frac{|x|^2}{\tilde{\lambda}(t)^2}+i\tilde{\gamma}(t)},\\
&\left|\frac{\tilde{\lambda}(t)}{\lambda(t)}-1\right|+\left|\tilde{b}(t)\right|+\left|\tilde{\gamma}(t)-\gamma(t)\right|_{\mathbb{R}\slash 2\pi\mathbb{Z}}<\overline{\epsilon}\nonumber
\end{align}
hold, where $|\cdot|_{\mathbb{R}\slash 2\pi\mathbb{Z}}$ is defined by
\[
|c|_{\mathbb{R}\slash 2\pi\mathbb{Z}}:=\inf_{m\in\mathbb{Z}}|c+2\pi m|,
\]
and that $\tilde{\varepsilon}$ satisfies the orthogonal conditions
\begin{align}
\label{orthocondi}
\left(\tilde{\varepsilon},i\Lambda P\right)_2=\left(\tilde{\varepsilon},|y|^2P\right)_2=\left(\tilde{\varepsilon},i\rho\right)_2=0
\end{align}
on $I$. In particular, $\tilde{\lambda}$, $\tilde{b}$, and $\tilde{\gamma}$ are $C^1$ functions and independent of $\lambda$ and $\gamma$.
\end{lemma}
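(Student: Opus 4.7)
The plan is to invoke the implicit function theorem, viewing the three orthogonality conditions in \eqref{orthocondi} as equations for the three unknowns $\tilde{\lambda},\tilde{b},\tilde{\gamma}$. Concretely, define
\[
\mathcal{G}(u,\tilde{\lambda},\tilde{b},\tilde{\gamma}):=\bigl((\tilde{\varepsilon},i\Lambda P)_2,\,(\tilde{\varepsilon},|y|^2 P)_2,\,(\tilde{\varepsilon},i\rho)_2\bigr)\in\mathbb{R}^3,
\]
where $\tilde{\varepsilon}(y):=\tilde{\lambda}^{N/2}u(\tilde{\lambda}y)e^{i\tilde{b}|y|^2/4-i\tilde{\gamma}}-P(y)$ and $P$ is the profile from Proposition \ref{theorem:constprof} evaluated with parameters $\tilde{\lambda},\tilde{b}$. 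Working pointwise in $t\in I$, I take $(\tilde{\lambda},\tilde{b},\tilde{\gamma})=(\lambda(t),0,\gamma(t))$ as the reference: the hypothesis gives $\tilde{\varepsilon}=w-P$ with $w(y):=\lambda^{N/2}u(\lambda y)e^{-i\gamma}$ satisfying $\|w-Q\|_{H^1}<\delta$, while $\|P-Q\|_{H^1}\lesssim\overline{l}^{\,\alpha}|\log\overline{l}|$ by construction, so $|\mathcal{G}|\lesssim\delta+\overline{l}^{\,\alpha}|\log\overline{l}|$ at the reference, which is small if $\delta$ and $\overline{l}$ are.

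Next I linearise $\mathcal{G}$ in $(\tilde{\lambda},\tilde{b},\tilde{\gamma})$ at this reference, replacing $P$ by $Q$ up to $O(\overline{l}^{\,\alpha}|\log\overline{l}|)$ corrections. Using the variable $a:=\tilde{\lambda}/\lambda-1$ in place of $\tilde{\lambda}$, the partial derivatives of $w$ at $(a,\tilde{b},\tilde{\gamma}-\gamma)=0$ are $\Lambda Q$ (real), $i|y|^2 Q/4$ (imaginary), and $-iQ$ (imaginary). Since $(\cdot,i\,\cdot)_2$ pairs real parts against imaginary parts, most entries of the Jacobian vanish by parity; the surviving entries reduce, via integration by parts together with $L_+\rho=|y|^2 Q$ and $L_+\Lambda Q=-2Q$, to $(|y|^2 Q,\Lambda Q)_2=-\||y|Q\|_2^2$ and $(Q,\rho)_2=\tfrac12\||y|Q\|_2^2$. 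The principal Jacobian is therefore
\[
J\approx\begin{pmatrix} 0 & -\tfrac14\||y|Q\|_2^2 & 0 \\ -\||y|Q\|_2^2 & 0 & 0 \\ 0 & \tfrac14(|y|^2 Q,\rho)_2 & -\tfrac12\||y|Q\|_2^2 \end{pmatrix},\qquad \det J\approx\tfrac18\||y|Q\|_2^6\neq 0,
\]
uniformly in $t$.

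The implicit function theorem then produces, for $\delta$ small enough in terms of $\overline{\epsilon}$ and $\overline{l}$, a unique triple $(\tilde{\lambda}(t),\tilde{b}(t),\tilde{\gamma}(t))$ inside the prescribed neighbourhood solving $\mathcal{G}=0$, which is precisely the decomposition \eqref{mod}--\eqref{orthocondi}. Local uniqueness in the IFT gives the stated independence from the witness pair $(\lambda,\gamma)$: any compatible witness yields the same nearby zero of $\mathcal{G}$. The $C^1$ regularity in $t$ follows from the smooth dependence in the IFT combined with $u\in C(I,H^1)\cap C^1(I,H^{-1})$ and the fact that $i\Lambda P,\,|y|^2 P,\,i\rho\in H^1(\mathbb{R}^N)$, so $t\mapsto\mathcal{G}(u(t),\cdot)$ is $C^1$ into $\mathbb{R}^3$. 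The main obstacle will be the Jacobian computation itself: carefully tracking which $(\cdot,\cdot)_2$ entries vanish by parity and using the linearised-operator identities from Section 2 to evaluate the rest. Once the dominant block of $J$ is identified and its determinant shown to be of order $\||y|Q\|_2^6$, the existence, uniqueness, and regularity assertions follow from a standard IFT argument.
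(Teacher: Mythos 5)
Your proposal is correct and follows exactly the standard modulation argument that the paper relies on (it states the lemma without proof, deferring to \cite{LMR,NI,RSEU}): an implicit function theorem applied to the three orthogonality conditions, with the nondegeneracy of the Jacobian coming from the identities $(\Lambda Q,|y|^2Q)_2=-\||y|Q\|_2^2$ and $(Q,\rho)_2=\tfrac12\||y|Q\|_2^2$ via $L_+\Lambda Q=-2Q$ and $L_+\rho=|y|^2Q$. Your Jacobian entries and the determinant $\tfrac18\||y|Q\|_2^6$ check out, so no gap to report.
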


\section{Approximate blow-up law}
\label{sec:apppara}
In this section, we describe the initial values and the approximation functions of the parameters $\lambda$ and $b$ in the decomposition.

We expect the parameters $\lambda$ and $b$ in the decomposition to approximately satisfy
\[
\frac{1}{\lambda}\frac{\partial \lambda}{\partial s}+b=\frac{\partial b}{\partial s}+b^2-\theta=0.
\]
Therefore, the approximation functions $\lambda_{\mathrm{app}}$ and $b_{\mathrm{app}}$ of the parameters $\lambda$ and $b$ will be determined by the following lemma:

Let $\lambda_0>0$ be sufficiently small and define $J$ by
\[
J(\lambda):=\int_\lambda^{\lambda_0}\frac{1}{\mu^{\frac{\alpha}{2}+1}\sqrt{-\frac{2\beta_1}{(2-\alpha)^2}+\frac{2\beta_2}{2-\alpha}-\frac{2\beta_1}{2-\alpha}\log\mu}}d\mu
\]
for any $\lambda\in(0,\lambda_0)$, where $\beta_2:=\beta_{2,0,0,0}$. In particular, since
\begin{align*}
&\sigma=\frac{2-\alpha}{2},\quad \beta_1=\frac{4\sigma\||\cdot|^{-\sigma}Q\|_2^2}{\||\cdot|Q\|_2^2},\quad \beta_2=\frac{4}{\||\cdot|Q\|_2^2}\left(\sigma\int_{\mathbb{R}^N}\frac{1}{|y|^{2\sigma}}\log|y|Q^2dy-\frac{1}{2}\||\cdot|^{-\sigma}Q\|_2^2\right),\\
&\beta'_1=\frac{4}{\||\cdot|Q\|_2^2}\int_{\mathbb{R}^N}\frac{1}{|y|^{2\sigma}}\log|y|Q^2dy.
\end{align*}
we obtain
\[
\beta'_1=-\frac{2\beta_1}{(2-\alpha)^2}+\frac{2\beta_2}{2-\alpha}.
\]

\begin{lemma}
\label{lbsol}
Let
\[
\lambda_{\app}(s):=J^{-1}(s),\quad b_{\app}(s):=\lambda_{\app}(s)^{\frac{\alpha}{2}}\sqrt{\beta'_1-\frac{2\beta_1}{2-\alpha}\log\lambda_{\app}(s)}.
\]
Then $(\lambda_{\app},b_{\app})$ is a solution for
\[
\frac{\partial b}{\partial s}+b^2+\beta_1\lambda^\alpha\log\lambda-\beta_2\lambda^\alpha=0,\quad \frac{1}{\lambda}\frac{\partial \lambda}{\partial s}+b=0
\]
in $s>0$.
\end{lemma}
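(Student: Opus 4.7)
The plan is to verify both ODEs by a direct differentiation of the formulas defining $\lambda_{\app}$ and $b_{\app}$. The key algebraic ingredient is the identity $\beta'_1 = -\frac{2\beta_1}{(2-\alpha)^2} + \frac{2\beta_2}{2-\alpha}$ established just before the lemma, which rewrites the radicand appearing in $J$ as $\beta'_1 - \frac{2\beta_1}{2-\alpha}\log\mu$, in agreement with the definition of $b_{\app}$.

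First I would settle the scaling equation $\frac{1}{\lambda}\lambda' + b = 0$ by differentiating the relation $J(\lambda_{\app}(s)) = s$. Since
\[
J'(\lambda) = -\frac{1}{\lambda^{1+\alpha/2}\sqrt{\beta'_1 - \tfrac{2\beta_1}{2-\alpha}\log\lambda}},
\]
the inverse function theorem yields
\[
\lambda_{\app}'(s) = -\lambda_{\app}^{1+\alpha/2}\sqrt{\beta'_1 - \tfrac{2\beta_1}{2-\alpha}\log\lambda_{\app}} = -\lambda_{\app}(s)\,b_{\app}(s),
\]
which is exactly the second ODE.

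Next I would handle the $b$-equation. Squaring the definition gives $b_{\app}^2 = \lambda_{\app}^{\alpha}\bigl(\beta'_1 - \tfrac{2\beta_1}{2-\alpha}\log\lambda_{\app}\bigr)$; differentiating in $s$ and substituting $\lambda_{\app}' = -\lambda_{\app} b_{\app}$ throughout, then dividing by $2b_{\app}$, produces
\[
b_{\app}' = -\tfrac{\alpha}{2}b_{\app}^2 + \tfrac{\beta_1}{2-\alpha}\lambda_{\app}^{\alpha}.
\]
Hence
\[
b_{\app}' + b_{\app}^2 = \tfrac{2-\alpha}{2}b_{\app}^2 + \tfrac{\beta_1}{2-\alpha}\lambda_{\app}^{\alpha} = \lambda_{\app}^{\alpha}\Bigl(\tfrac{2-\alpha}{2}\beta'_1 + \tfrac{\beta_1}{2-\alpha}\Bigr) - \beta_1\lambda_{\app}^{\alpha}\log\lambda_{\app},
\]
and the $\beta'_1$ identity collapses the parenthesised coefficient to exactly $\beta_2$, giving $b_{\app}' + b_{\app}^2 = \beta_2\lambda_{\app}^{\alpha} - \beta_1\lambda_{\app}^{\alpha}\log\lambda_{\app}$, which is the first ODE.

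The only preliminary point is to make sure the construction is legal: for $\lambda_0>0$ small and $\lambda\in(0,\lambda_0)$, the radicand $\beta'_1 - \tfrac{2\beta_1}{2-\alpha}\log\lambda$ is strictly positive because $\beta_1>0$, $2-\alpha = 2\sigma>0$, and the $-\log\lambda$ term dominates; moreover the integrand in $J$ behaves like $\mu^{-1-\alpha/2}|\log\mu|^{-1/2}$ near the origin, so $J$ is a strictly decreasing $C^1$ diffeomorphism from $(0,\lambda_0)$ onto $(0,\infty)$ and $\lambda_{\app}=J^{-1}$ is well defined and $C^1$ on $(0,\infty)$. In short, there is essentially no obstacle here: the lemma is a bookkeeping verification, with the only points requiring care being the sign of $J'$ when applying the inverse function theorem and the consistent use of the $\beta'_1$ identity to match the $\beta_2$ coefficient at the end.
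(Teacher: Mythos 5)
Your verification is correct: both ODEs follow by differentiating $J(\lambda_{\app}(s))=s$ and the squared definition of $b_{\app}$, and the coefficient identity $\frac{2-\alpha}{2}\beta'_1+\frac{\beta_1}{2-\alpha}=\beta_2$ checks out. This is exactly the ``direct calculation'' the paper's proof invokes without writing out, so your argument matches the intended approach and simply supplies the omitted details.
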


\begin{proof}
This result is proven via direct calculation. For the construction of $(\lambda_{\app},b_{\app})$, see \cite{LMR}.
\end{proof}

For the sake of the discussion that follows, we will introduce some properties of the Lambert function $W_{-1}$. For definition of $W_{-1}$, see \cite{LW}.

\begin{corollary}[\cite{IC}]
\label{Wprop}
For any $u>0$,
\[
(1-\epsilon)u-\frac{2}{\epsilon}<-W_{-1}(-e^{-u-1})-1-\sqrt{2u}<u
\]
holds. In particular, for any $z\in\left(0,\frac{1}{e}\right)$,
\[
-(1-\epsilon)\log z+\sqrt{2(-\log z-1)}+\epsilon-\frac{2}{\epsilon}<-W_{-1}(-z)<-\log z+\sqrt{2(-\log z-1)}
\]
holds.
\end{corollary}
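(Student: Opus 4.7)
The plan is to reduce everything to a clean statement about the implicit equation $u = v - \log(1+v)$, which is what the defining relation of $W_{-1}$ becomes after the right substitution, and then verify the two resulting inequalities by elementary calculus plus AM--GM.

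First I would set
\[
v := -W_{-1}(-e^{-u-1}) - 1.
\]
Since $W_{-1}$ takes values in $(-\infty,-1]$, we have $v \geq 0$, and the defining relation $we^w = -e^{-u-1}$ with $w = -(v+1)$ gives $(v+1)e^{-v-1} = e^{-u-1}$, hence
\[
u = v - \log(1+v).
\]
In these variables, the first pair of inequalities to be shown, $(1-\epsilon)u - 2/\epsilon < v - \sqrt{2u} < u$, becomes the two claims
\[
v < u + \sqrt{2u}, \qquad v > (1-\epsilon)u + \sqrt{2u} - \tfrac{2}{\epsilon}.
\]

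For the \emph{upper} inequality, I would rewrite $v - u = \log(1+v)$ and square, reducing the claim to
\[
(\log(1+v))^2 < 2(v - \log(1+v)).
\]
Set $g(v) := 2(v-\log(1+v)) - (\log(1+v))^2$. Then $g(0) = 0$ and a direct differentiation gives
\[
g'(v) = \frac{2v}{1+v} - \frac{2\log(1+v)}{1+v} = \frac{2(v - \log(1+v))}{1+v} \geq 0,
\]
which is strictly positive for $v > 0$, so $g(v) > 0$ on $(0,\infty)$, as required. For the \emph{lower} inequality, using $v - u = \log(1+v) \geq 0$, it suffices to prove
\[
\epsilon u + \tfrac{2}{\epsilon} > \sqrt{2u},
\]
which follows from the arithmetic--geometric mean inequality $\epsilon u + \tfrac{2}{\epsilon} \geq 2\sqrt{\epsilon u \cdot 2/\epsilon} = 2\sqrt{2u} > \sqrt{2u}$ for $u > 0$.

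For the second pair of inequalities I would just make the substitution $z = e^{-u-1}$, which identifies $z \in (0, 1/e)$ with $u \in (0, \infty)$ and gives $-\log z = u + 1$, $\sqrt{2(-\log z - 1)} = \sqrt{2u}$. Adding $1 + \sqrt{2u}$ to the bounds established above yields exactly the stated bounds on $-W_{-1}(-z)$. There is no real obstacle here, so the only thing to be careful about is the bookkeeping between the three variables $u$, $v$, $z$ and the direction of the strict inequalities; beyond that, both bounds are one-line consequences of convexity and AM--GM respectively.
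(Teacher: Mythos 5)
Your proof is correct, and it is worth noting that the paper itself offers no argument for this corollary at all: it is simply quoted from the reference [IC] (Chatzigeorgiou), whose Theorem~1 gives the two-sided bound $-1-\sqrt{2u}-u<W_{-1}(-e^{-u-1})<-1-\sqrt{2u}-\tfrac{2}{3}u$, i.e.\ the same upper bound $v<u+\sqrt{2u}$ but the sharper lower bound $\tfrac{2}{3}u<v-\sqrt{2u}$ in place of the paper's $(1-\epsilon)u-\tfrac{2}{\epsilon}$. Your substitution $v=-W_{-1}(-e^{-u-1})-1$, the reduction to $u=v-\log(1+v)$, and the monotonicity argument for $g(v)=2(v-\log(1+v))-(\log(1+v))^2$ reproduce the standard route to the upper bound; your lower bound is obtained differently, by the crude estimate $v\geq u$ together with AM--GM applied to $\epsilon u+\tfrac{2}{\epsilon}\geq 2\sqrt{2u}$, which trades the sharp constant $\tfrac{2}{3}$ for a two-line argument. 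That trade is harmless here, since the corollary is only invoked in Lemma~\ref{Jprop} and Corollary~\ref{st} to extract the leading asymptotics $-W_{-1}(-z)\approx-\log z$, for which the $\epsilon$-weakened bound is ample. The only points requiring care, which you handle correctly, are the sign checks ($v>0$ and $\log(1+v)>0$ for $u>0$, so the squaring step is legitimate) and the positivity of $\epsilon$ in the AM--GM step, which is consistent with the paper's convention that $\epsilon$ denotes a small positive constant.
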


\begin{lemma}
\label{Jprop}
Let $\lambda$ be sufficiently small. Then
\begin{align}
\label{Japp}
J(\lambda)^{-1}=\frac{2}{\alpha}\sqrt{\frac{2\beta_1}{2-\alpha}}\lambda^\frac{\alpha}{2}\sqrt{|\log\lambda|}\left(1+O\left(\frac{1}{|\log\lambda|}\right)\right)
\end{align}
holds. In addition,
\[
\left|\frac{\alpha^2}{4}\left(-\frac{2\beta_1}{2-\alpha}\lambda^\alpha\log\lambda+\beta'_1\lambda^\alpha\right)-J(\lambda)^{-2}\right|\lesssim \lambda^\alpha
\]
holds.

Moreover, let $s$ be sufficiently large. Then
\begin{align}
&s^{-2}=J(\lambda_{\app}(s))^{-2}=\frac{4}{\alpha^2}\frac{2\beta_1}{2-\alpha}\lambda_{\app}(s)^\alpha|\log\lambda_{\app}(s)|(1+o(1)),\nonumber\\
\label{lambapp}
&\lambda_{\app}(s)^\alpha|\log\lambda_{\app}(s)|\approx b_{\app}(s)^2\approx s^{-2},\quad \lambda_{\app}(s)\approx s^{-\frac{2}{\alpha}}\left(\log s\right)^{-\frac{1}{\alpha}}
\end{align}
hold.
\end{lemma}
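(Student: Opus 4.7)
My plan is to first establish a two-term asymptotic expansion of $J(\lambda)$ as $\lambda\to 0^+$ by constructing an explicit approximate primitive of its integrand, and then to invert the defining relation $J(\lambda_{\app}(s))=s$ using the Lambert-$W_{-1}$ bounds furnished by Corollary \ref{Wprop}.

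Set $h(\mu):=\beta'_1+C|\log\mu|$ with $C:=\frac{2\beta_1}{2-\alpha}$, so that the integrand of $J$ is $g(\mu)=\mu^{-\alpha/2-1}h(\mu)^{-1/2}$. A direct differentiation shows
\[
\frac{d}{d\mu}\!\left[-\frac{2}{\alpha}\,\mu^{-\alpha/2}h(\mu)^{-1/2}\right]=g(\mu)\left(1-\frac{C}{\alpha\,h(\mu)}\right),
\]
so this function is a primitive of $g$ modulo a correction of relative size $1/|\log\mu|$. Integrating from $\lambda$ to $\lambda_0$ and bounding the correction by applying the same primitive construction once more to $g/h$ yields
\[
J(\lambda)=\frac{2}{\alpha\,\lambda^{\alpha/2}\sqrt{h(\lambda)}}\Bigl(1+O\bigl(|\log\lambda|^{-1}\bigr)\Bigr).
\]
Inverting and using $h(\lambda)=C|\log\lambda|+O(1)$ gives \eqref{Japp}. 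Squaring the same expansion and keeping the next-order contribution of $h$ explicit produces
\[
J(\lambda)^{-2}=\tfrac{\alpha^2}{4}\lambda^\alpha h(\lambda)\bigl(1+O(|\log\lambda|^{-1})\bigr)=\tfrac{\alpha^2}{4}\!\left(-\tfrac{2\beta_1}{2-\alpha}\lambda^\alpha\log\lambda+\beta'_1\lambda^\alpha\right)+O(\lambda^\alpha),
\]
which is the second estimate. The key observation is that the relative $O(1/|\log\lambda|)$ error, once multiplied by $\lambda^\alpha|\log\lambda|$, contributes only $O(\lambda^\alpha)$.

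For the claims about $\lambda_{\app}$, the first line of \eqref{lambapp} is obtained by evaluating the $J^{-2}$ expansion at $\lambda=\lambda_{\app}(s)$, since $J(\lambda_{\app}(s))=s$ by definition. The identity $b_{\app}(s)^2=\lambda_{\app}(s)^\alpha h(\lambda_{\app}(s))$ combined with this then gives $b_{\app}^2\approx\lambda_{\app}^\alpha|\log\lambda_{\app}|\approx s^{-2}$. The remaining estimate $\lambda_{\app}(s)\approx s^{-2/\alpha}(\log s)^{-1/\alpha}$ follows by inverting the implicit relation $\lambda^\alpha|\log\lambda|\approx s^{-2}$: setting $u:=-\log(\lambda^\alpha)$ converts it to $u\,e^{-u}\approx s^{-2}$, i.e.\ $u=-W_{-1}(-\kappa s^{-2})$ for an explicit $\kappa>0$, and Corollary \ref{Wprop} yields $u=2\log s+O(\sqrt{\log s})$, whence $|\log\lambda_{\app}|\sim(2/\alpha)\log s$ and therefore $\lambda_{\app}^\alpha\approx s^{-2}/\log s$.

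The main technical obstacle I expect is controlling the remainder sharply enough in the $J^{-2}$ expansion. A single application of the primitive trick produces a relative error $O(|\log\lambda|^{-1})$, which, after squaring and multiplication by $\lambda^\alpha|\log\lambda|$, is only borderline $O(\lambda^\alpha)$. Ensuring the clean $O(\lambda^\alpha)$ bound required here forces one to iterate the primitive construction once on the integral of $g/h$, extracting an extra factor of $|\log\lambda|^{-1}$ so that no stray logarithms survive. Once this bookkeeping is in place, the Lambert-$W$ inversion for $\lambda_{\app}$ is essentially a routine substitution in Corollary \ref{Wprop}.
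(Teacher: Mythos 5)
Your proposal is correct and follows essentially the same route as the paper: an explicit approximate primitive of the integrand with relative error $O(1/|\log\lambda|)$ (the paper writes out the same integration-by-parts identity, splitting off the $\beta'_1$ contribution instead of keeping $h(\mu)$ whole, and likewise iterates once to control the correction integral), followed by the Lambert-$W_{-1}$ inversion via Corollary \ref{Wprop}. One remark: your (correct) computation gives the prefactor $\frac{\alpha}{2}$ in the expansion of $J(\lambda)^{-1}$, which is consistent with the $\frac{\alpha^2}{4}$ appearing in the lemma's second display, whereas \eqref{Japp} prints $\frac{2}{\alpha}$ — this appears to be a typo in the statement (and in the first line of \eqref{lambapp}) rather than a gap in your argument.
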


\begin{proof}
Since $\lambda_0$ is sufficiently small,
\[
\mu^{\frac{\alpha}{2}+1}\sqrt{\frac{\beta_1}{2-\alpha}|\log\mu|}\leq \mu^{\frac{\alpha}{2}+1}\sqrt{-\frac{2\beta_1}{(2-\alpha)^2}+\frac{2\beta_2}{2-\alpha}-\frac{2\beta_1}{2-\alpha}\log\mu}\leq \mu^{\frac{\alpha}{2}+1}\sqrt{\frac{4\beta_1}{2-\alpha}|\log\mu|}
\]
holds for any $\mu\in(0,\lambda_0)$. Therefore, we obtain
\[
\frac{\alpha}{2}\frac{1}{\sqrt{\frac{4\beta_1}{2-\alpha}|\log\lambda|}}\left(\frac{1}{\lambda^\frac{\alpha}{2}}-\frac{1}{\lambda_0^\frac{\alpha}{2}}\right)\leq\int_{\lambda}^{\lambda_0}\frac{1}{\mu^{\frac{\alpha}{2}+1}\sqrt{-\frac{2\beta_1}{(2-\alpha)^2}+\frac{2\beta_2}{2-\alpha}-\frac{2\beta_1}{2-\alpha}\log\mu}}d\mu\leq\frac{\alpha}{4}\frac{1}{\sqrt{\frac{\beta_1}{2-\alpha}\lambda^\alpha|\log\lambda|}}.
\]
Moreover, we obtain
\[
\frac{\alpha}{4}\frac{1}{\sqrt{\frac{4\beta_1}{2-\alpha}|\log\lambda|}}\frac{1}{\lambda^\frac{\alpha}{2}}\leq J(\lambda)\leq \frac{\alpha}{4}\frac{1}{\sqrt{\frac{\beta_1}{2-\alpha}\lambda^\alpha|\log\lambda|}}
\]
for any sufficiently small $\lambda$.

Next, since
\[
\frac{1}{\frac{2}{\alpha}\sqrt{\frac{2\beta_1}{2-\alpha}}\lambda^\frac{\alpha}{2}\sqrt{|\log\lambda|}}=\int_\lambda^{\lambda_0}\left(\frac{1}{\sqrt{\frac{2\beta_1}{2-\alpha}}\mu^{\frac{\alpha}{2}+1}\sqrt{|\log\mu|}}-\frac{1}{\frac{4}{\alpha}\sqrt{\frac{2\beta_1}{2-\alpha}}\mu^{\frac{\alpha}{2}+1}|\log\mu|^\frac{3}{2}}\right)d\mu+\frac{1}{\frac{2}{\alpha}\sqrt{\frac{2\beta_1}{2-\alpha}}\lambda_0^\frac{\alpha}{2}\sqrt{|\log\lambda_0|}},
\]
we obtain
\begin{align*}
&J(\lambda)-\frac{1}{\frac{2}{\alpha}\sqrt{\frac{2\beta_1}{2-\alpha}}\lambda^\frac{\alpha}{2}\sqrt{|\log\lambda|}}\\
=&\int_\lambda^{\lambda_0}\frac{-\beta'_1}{\mu^{\frac{\alpha}{2}+1}\sqrt{\frac{2\beta_1}{2-\alpha}|\log\mu|+\beta'_1}\sqrt{\frac{2\beta_1}{2-\alpha}|\log\mu|}\left(\sqrt{\frac{2\beta_1}{2-\alpha}|\log\mu|+\beta'_1}+\sqrt{\frac{2\beta_1}{2-\alpha}|\log\mu|}\right)}d\mu\\
&\hspace{20pt}+\int_\lambda^{\lambda_0}\frac{1}{\frac{4}{\alpha}\sqrt{\frac{2\beta_1}{2-\alpha}}\mu^{\frac{\alpha}{2}+1}|\log\mu|^\frac{3}{2}}d\mu+\frac{1}{\frac{2}{\alpha}\sqrt{\frac{2\beta_1}{2-\alpha}}\lambda_0^\frac{\alpha}{2}\sqrt{|\log\lambda_0|}}.
\end{align*}
Therefore,
\[
\left|J(\lambda)-\frac{1}{\frac{2}{\alpha}\sqrt{\frac{2\beta_1}{2-\alpha}}\lambda^\frac{\alpha}{2}\sqrt{|\log\lambda|}}\right|\lesssim\int_\lambda^{\lambda_0}\frac{1}{\mu^{\frac{\alpha}{2}+1}|\log\mu|^\frac{3}{2}}d\mu+1\lesssim\frac{1}{\lambda^{\frac{\alpha}{2}}|\log\lambda|^\frac{3}{2}}.
\]
Accordingly,
\[
\left|J(\lambda)^{-1}-\frac{2}{\alpha}\sqrt{\frac{2\beta_1}{2-\alpha}}\lambda^\frac{\alpha}{2}\sqrt{|\log\lambda|}\right|\lesssim \lambda^\alpha|\log\lambda|\frac{1}{\lambda^{\frac{\alpha}{2}}|\log\lambda|^\frac{3}{2}}=\lambda^\frac{\alpha}{2}\sqrt{|\log\lambda|}\frac{1}{|\log\lambda|}.
\]

Since $\lambda_{\app}(s)\rightarrow 0$ as $s\rightarrow\infty$,
\begin{align*}
s^{-1}=J(\lambda_{\app}(s))^{-1}=\frac{2}{\alpha}\sqrt{\frac{2\beta_1}{2-\alpha}}\lambda_{\app}(s)^\frac{\alpha}{2}\sqrt{|\log\lambda_{\app}(s)|}\left(1+o(1)\right)
\end{align*}
for any sufficiently large $s$. Therefore,
\[
\frac{1}{2}s^{-2}\leq\frac{4}{\alpha^2}\frac{2\beta_1}{2-\alpha}\lambda_{\app}(s)^\alpha|\log\lambda_{\app}(s)|\leq 2s^{-2}
\]
and
\[
\frac{\beta_1}{2-\alpha}\lambda_{\app}(s)^\alpha|\log\lambda_{\app}(s)|\leq b_{\app}(s)^2\leq \frac{4\beta_1}{2-\alpha}\lambda_{\app}(s)^\alpha|\log\lambda_{\app}(s)|
\]
hold.

Finally, since
\[
-C_1s^{-2}\leq\lambda_{\app}(s)^\alpha\log\lambda_{\app}(s)^\alpha=\log\lambda_{\app}(s)^\alpha e^{\log\lambda_{\app}(s)^\alpha}=W_{-1}^{-1}(\log\lambda_{\app}(s)^\alpha)\leq -C_2s^{-2},
\]
we obtain
\[
W_{-1}(-C_2s^{-2})\leq \log\lambda_{\app}(s)^\alpha\leq W_{-1}(-C_1s^{-2}).
\]
Since $e^{W(z)}=\frac{z}{W(z)}$, we obtain
\[
\frac{-C_2s^{-2}}{W_{-1}(-C_2s^{-2})}\leq \lambda_{\app}(s)^\alpha\leq \frac{-C_2s^{-2}}{W_{-1}(-C_2s^{-2})}.
\]
Since
\[
-\frac{1}{2}\left(-2\log s+\log C\right)<-W_{-1}\left(-Cs^{-1}\right)<-2\left(-2\log s+\log C\right)
\]
from Lemma \ref{Wprop}, we obtain
\[
\lambda_{\app}(s)\approx s^{-\frac{2}{\alpha}}(\log s)^{-\frac{1}{\alpha}}.
\]
\end{proof}

Furthermore, the following lemma determines $\lambda(s_1)$ and $b(s_1)$ for a given energy level $E_0$ and a sufficiently large $s_1$.

\begin{lemma}
\label{paraini}
Let define $C_0:=\frac{8E_0}{\||y|Q\|_2^2}$ and $0<\lambda_0\ll 1$ such that $-\frac{2\beta_1}{2-\alpha}\log\lambda_0+\beta'_1+C_0{\lambda_0}^{2-\alpha}>0$. For $\lambda\in(0,\lambda_0]$, we set
\[
\mathcal{F}(\lambda):=\int_\lambda^{\lambda_0}\frac{1}{\mu^{\frac{\alpha}{2}+1}\sqrt{-\frac{2\beta_1}{2-\alpha}\log\mu+\beta'_1+C_0\mu^{2-\alpha}}}d\mu.
\]
Then for any $s_1\gg 1$, there exist $b_1,\lambda_1>0$ such that
\[
\left|\frac{{s_1}^{-1}}{J(\lambda_1)^{-1}}-1\right|+\left|\frac{b_1}{b_{\mathrm{app}}(s_1)}-1\right|\lesssim {s_1}^{-\frac{1}{2}}\sqrt{|\log s_1|}+{s_1}^{2-\frac{4}{\alpha}}\sqrt{|\log s_1|},\quad \mathcal{F}(\lambda_1)=s_1,\quad E(P_{\lambda_1,b_1,\gamma})=E_0.
\]
Moreover,
\[
\left|\mathcal{F}(\lambda)-J(\lambda)\right|\lesssim\lambda^{-\frac{\alpha}{4}}+\lambda^{2-\frac{3}{2}\alpha}
\]
holds.
\end{lemma}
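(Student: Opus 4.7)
The plan is to invert $\mathcal{F}$ to produce $\lambda_1$, then solve the energy identity for $b_1$, and finally compare $(\lambda_1,b_1)$ with $(\lambda_{\app}(s_1),b_{\app}(s_1))$ by means of the $|\mathcal{F}-J|$ estimate. Since $\alpha>0$ the integrand defining $\mathcal{F}$ is positive and $\mu^{-\alpha/2-1}|\log\mu|^{-1/2}$ fails to be integrable at $0$, so $\mathcal{F}\colon(0,\lambda_0]\to[0,\infty)$ is a strictly decreasing $C^1$ bijection with $\mathcal{F}(\lambda_0)=0$ and $\mathcal{F}(\lambda)\to\infty$ as $\lambda\to 0^+$. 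Define $\lambda_1:=\mathcal{F}^{-1}(s_1)$; then $\mathcal{F}(\lambda_1)=s_1$ by construction.

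For this $\lambda_1$ I choose $b_1$ by enforcing $E(P_{\lambda_1,b_1,\gamma})=E_0$. By \eqref{Eesti}, after multiplying through by $\lambda_1^2$ this equation is equivalent, modulo an error of size $\lambda_1^\alpha|\log\lambda_1|\,(b_1^2+\lambda_1^\alpha|\log\lambda_1|)$, to
\[
b_1^2=\lambda_1^\alpha\!\left(-\frac{2\beta_1}{2-\alpha}\log\lambda_1+\beta'_1+C_0\lambda_1^{2-\alpha}\right),
\]
whose right-hand side is strictly positive by the choice of $\lambda_0$. The $b$-derivative of $E(P_{\lambda_1,b,\gamma})$ is non-degenerate since its leading term is $b\,\||y|Q\|_2^2/(4\lambda_1^2)$, so a contraction/implicit-function argument on $b^2$ produces a unique positive solution $b_1$ that agrees with the explicit formula above up to a relative error $O(\lambda_1^\alpha|\log\lambda_1|)$.

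For the bound $|\mathcal{F}(\lambda)-J(\lambda)|$, I apply the elementary identity
\[
\frac{1}{\sqrt{A}}-\frac{1}{\sqrt{A+B}}=O\!\left(\frac{B}{A^{3/2}}\right)
\]
with $A:=-\frac{2\beta_1}{2-\alpha}\log\mu+\beta'_1\asymp|\log\mu|$ and $B:=C_0\mu^{2-\alpha}$. The integrand of $J-\mathcal{F}$ is then bounded by $C\mu^{1-3\alpha/2}|\log\mu|^{-3/2}$, and integration from $\lambda$ to $\lambda_0$ gives the stated bound $\lesssim\lambda^{-\alpha/4}+\lambda^{2-3\alpha/2}$, the first term absorbing log losses in the regime where $2-3\alpha/2$ is nonpositive or near zero.

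Finally, using $\lambda_1\sim\lambda_{\app}(s_1)\approx s_1^{-2/\alpha}(\log s_1)^{-1/\alpha}$ from Lemma~\ref{Jprop}, the identity
\[
\frac{s_1^{-1}}{J(\lambda_1)^{-1}}-1=\frac{J(\lambda_1)-\mathcal{F}(\lambda_1)}{s_1}
\]
combined with the comparison bound converts directly into the first claimed $s_1$-level bound. For $b_1/b_{\app}(s_1)-1$, I Taylor-expand the square root defining $b_1$ against the one defining $b_{\app}(s_1)$: the extra $C_0\lambda_1^{2-\alpha}$ term yields the $s_1^{2-4/\alpha}\sqrt{|\log s_1|}$ contribution, while the $\log$-discrepancy between $\lambda_1$ and $\lambda_{\app}(s_1)$ yields the $s_1^{-1/2}\sqrt{|\log s_1|}$ contribution. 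The main obstacle is a mild bootstrap: the quantitative estimate of $\lambda_1-\lambda_{\app}(s_1)$ presupposes their comparability, which must first be established by a rough preliminary bound (e.g.\ $\lambda_1\in[\lambda_{\app}(s_1)/2,\,2\lambda_{\app}(s_1)]$) drawn from the monotonicity of $\mathcal{F}$ and $J$ together with the smallness of $|\mathcal{F}-J|$, and then refined to the stated precision.
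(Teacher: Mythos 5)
Your proposal is correct and follows essentially the same route as the paper, which defers most of these details to \cite{LMR,NI}: take $\lambda_1=\mathcal{F}^{-1}(s_1)$, fix $b_1$ through the energy constraint via \eqref{Eesti}, bound $|\mathcal{F}-J|$ by comparing the two square roots (the arguments differ exactly by $C_0\mu^{2-\alpha}$ since $\beta'_1=-\tfrac{2\beta_1}{(2-\alpha)^2}+\tfrac{2\beta_2}{2-\alpha}$), and convert to the $s_1$-level bounds using the rough two-sided comparability $\lambda_1\approx s_1^{-2/\alpha}(\log s_1)^{-1/\alpha}$. Your write-up in fact supplies more detail than the paper's own proof.
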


\begin{proof}
The method of choosing $\lambda_1$ and $b_1$ and the estimate of $\mathcal{F}$ and $b_1$ are the same as in \cite{LMR,NI}.

Since
\[
s_1=J(\lambda_{\app}(s_1)),\quad J(\lambda)^{-1}\approx\lambda^\frac{\alpha}{2}\sqrt{|\log\lambda|},
\]
we obtain
\[
\left|\frac{J(\lambda_{\app}(s_1))}{J(\lambda_1)}-1\right|\lesssim \lambda_1^{\frac{\alpha}{4}}\sqrt{|\log\lambda_1|}+\lambda_1^{2-\alpha}\sqrt{|\log\lambda_1|}.
\]
Moreover, since
\[
s_1=\mathcal{F}(\lambda_1)\lesssim\int_\lambda^{\lambda_0}\frac{1}{\mu^{\frac{\alpha}{2}+1}\sqrt{|\log\mu|}}d\mu\leq\int_\lambda^{\lambda_0}\frac{1}{\mu^{\frac{\alpha}{2}+1}}d\mu\leq \lambda_1^{-\frac{\alpha}{2}},
\]
we obtain
\[
\lambda_1\lesssim s_1^{-\frac{2}{\alpha}}.
\]
Consequently, we obtain the conclusion.
\end{proof}

\section{Uniformity estimates for decomposition}
\label{sec:uniesti}
In this section, we estimate \textit{modulation terms}.

We define
\[
t_{\app}(s):=-\int_s^\infty \lambda_{\app}(\mu)^2d\mu.
\]
For $t_1<0$ such that is sufficiently close to $0$, we define
\[
s_1:={t_{\app}}^{-1}(t_1).
\]
Additionally, let $\lambda_1$ and $b_1$ be given in Lemma \ref{paraini} for $s_1$ and $\gamma_1=0$. Let $u$ be the solution for \eqref{NLS} with $\pm=+$ with an initial value
\[
u(t_1,x):=P_{\lambda_1,b_1,0}(x).
\]
Then since $u$ satisfies the assumption of Lemma \ref{decomposition} in a neighbourhood of $t_1$, there exists a decomposition $(\tilde{\lambda}_{t_1},\tilde{b}_{t_1},\tilde{\gamma}_{t_1},\tilde{\varepsilon}_{t_1})$ such that $(\ref{mod})$ in a neighbourhood $I$ of $t_1$. The rescaled time $s_{t_1}$ is defined by
\[
s_{t_1}(t):=s_1-\int_t^{t_1}\frac{1}{\tilde{\lambda}_{t_1}(\tau)^2}d\tau.
\]
Then we define an inverse function ${s_{t_1}}^{-1}:s_{t_1}(I)\rightarrow I$. Moreover, we define
\begin{align*}
t_{t_1}&:={s_{t_1}}^{-1},& \lambda_{t_1}(s)&:=\tilde{\lambda}(t_{t_1}(s)),& b_{t_1}(s)&:=\tilde{b}(t_{t_1}(s)),\\
\gamma_{t_1}(s)&:=\tilde{\gamma}(t_{t_1}(s)),& \varepsilon_{t_1}(s,y)&:=\tilde{\varepsilon}(t_{t_1}(s),y).&&
\end{align*}
For the sake of clarity in notation, we often omit the subscript $t_1$. In particular, it should be noted that $u\in C((T_*,T^*),\Sigma^2(\mathbb{R}^N))$ and $|x|\nabla u\in C((T_*,T^*),L^2(\mathbb{R}^N))$. Furthermore, let $I_{t_1}$ be the maximal interval such that a decomposition as $(\ref{mod})$ is obtained and we define 
\[
J_{s_1}:=s\left(I_{t_1}\right).
\]
Additionally, let $s_0\ (\leq s_1)$ be sufficiently large and let 
\[
s':=\max\left\{s_0,\inf J_{s_1}\right\}.
\]

Let $s_*$ be defined by
\[
s_*:=\inf\left\{\sigma\in(s',s_1]\ \middle|\ \mbox{(\ref{bootstrap}) holds on }[\sigma,s_1]\right\},
\]
where
\begin{align}
\label{bootstrap}
&\left\|\varepsilon(s)\right\|_{H^1}^2+b(s)^2\||y|\varepsilon(s)\|_2^2<s^{-2K},\quad \left|\frac{s^{-1}}{J(\lambda(s))^{-1}}-1\right|+\left|\frac{b(s)}{b_{\mathrm{app}}(s)}-1\right|<(\log s)^{-\frac{1}{2}}.
\end{align}

Finally, we define
\[
\Mod(s):=\left(\frac{1}{\lambda}\frac{\partial \lambda}{\partial s}+b,\frac{\partial b}{\partial s}+b^2-\theta,1-\frac{\partial \gamma}{\partial s}\right).
\]
The goal of this section is to estimate of $\mathrm{Mod}(s)$.

\begin{lemma}
\label{lambapprox}
On $(s_*,s_1]$,
\[
\lambda(s)^\alpha|\log\lambda(s)|\approx s^{-2},\quad b(s)\approx s^{-1}
\]
holds.
\end{lemma}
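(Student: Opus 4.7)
The plan is to extract Lemma~\ref{lambapprox} as a direct algebraic consequence of the bootstrap assumption~(\ref{bootstrap}) combined with the asymptotic formulas of Lemma~\ref{Jprop}. Since the bootstrap forces the relative errors $s^{-1}/J(\lambda(s))^{-1}-1$ and $b(s)/b_{\mathrm{app}}(s)-1$ to be $O((\log s)^{-1/2})$, and $s_0$ is chosen sufficiently large, both ratios stay trapped in a neighbourhood of $1$ and remain bounded away from $0$. Consequently the symbol ``${\approx}$'' in the conclusion follows at once once the relevant $J$ and $b_{\mathrm{app}}$ asymptotics are inserted.

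For the first estimate, I would begin by squaring the bootstrap inequality to deduce $s^{-2}\approx J(\lambda(s))^{-2}$ on $(s_*,s_1]$. Because $J(\lambda)^{-1}\to 0$ as $\lambda\to 0^+$ (equivalently $J(\lambda)\to\infty$), the comparison $s^{-1}\approx J(\lambda(s))^{-1}$ with $s$ large forces $\lambda(s)$ to be small, so the regime in which Lemma~\ref{Jprop} applies is in force and
\[
J(\lambda(s))^{-2}=\frac{4}{\alpha^{2}}\,\frac{2\beta_{1}}{2-\alpha}\,\lambda(s)^{\alpha}|\log\lambda(s)|\bigl(1+o(1)\bigr).
\]
Combining this with $s^{-2}\approx J(\lambda(s))^{-2}$ yields $\lambda(s)^{\alpha}|\log\lambda(s)|\approx s^{-2}$, which is the first assertion.

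For the second estimate, the bootstrap directly gives $b(s)\approx b_{\mathrm{app}}(s)$; Lemma~\ref{Jprop} provides $b_{\mathrm{app}}(s)\approx s^{-1}$ via (\ref{lambapp}), so $b(s)\approx s^{-1}$ follows.

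There is no genuine obstacle: the argument is pure book-keeping, provided $s_{0}$ is taken large enough so that the $(\log s)^{-1/2}$ slack in~(\ref{bootstrap}) and the $o(1)$ error in Lemma~\ref{Jprop} are comfortably smaller than the multiplicative constants hidden in ``${\approx}$''. This is in fact the closure mechanism of the whole bootstrap: the crude bounds~(\ref{bootstrap}) are converted, via the explicit asymptotics of $J$ and $b_{\mathrm{app}}$, into the clean power-and-log expressions of Lemma~\ref{lambapprox} that will feed the subsequent modulation estimates.
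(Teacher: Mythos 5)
Your proposal is correct and follows essentially the same route as the paper: the bound $b(s)\approx s^{-1}$ falls out of the bootstrap comparison with $b_{\mathrm{app}}$ together with \eqref{lambapp}, and the first estimate comes from squaring the bootstrap comparison $s^{-1}\approx J(\lambda(s))^{-1}$ and inserting the asymptotics of Lemma \ref{Jprop}, after noting that $\lambda(s)$ is small enough for those asymptotics to apply (the paper secures this from Lemma \ref{decomposition} and \eqref{Japp}, you from the monotonicity and divergence of $J$ at $0$ — both are fine).
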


\begin{proof}
For $b$, it is clear from \eqref{lambapp} and \eqref{bootstrap}.

From Lemma \ref{decomposition}, we may assume $|\lambda(s)|\leq \frac{1}{2}$. Since
\[
\left|\frac{J(\lambda(s))^{-1}}{s^{-1}}-1\right|\lesssim (\log s)^{-\frac{1}{2}},
\]
we obtain
\[
\left|J(\lambda(s))^{-1}\right|\lesssim s^{-1}\left(1+(\log s)^{-\frac{1}{2}}\right).
\]
Moreover, since
\[
\lambda(s)^\alpha\lesssim\lambda(s)^\alpha|\log\lambda(s)|\lesssim s^{-2}\left(1+(\log s)^{-\frac{1}{2}}\right)^2
\]
from \eqref{Japp}, we have $\lambda(s)=o(1)$ and
\[
\frac{4}{\alpha^2}\frac{2\beta_1}{2-\alpha}\lambda(s)^\alpha|\log\lambda(s)|=J(\lambda(s))^{-2}(1+o(1)).
\]

Next, since
\[
\left|\frac{J(\lambda(s))^{-2}}{s^{-2}}-1\right|\lesssim (\log s)^{-\frac{1}{2}},\quad\mbox{i.e.,}\ \left|J(\lambda(s))^{-2}-s^{-2}\right|\lesssim s^{-2}(\log s)^{-\frac{1}{2}},
\]
we obtain
\[
\left|\frac{4}{\alpha^2}\frac{2\beta_1}{2-\alpha}\lambda(s)^\alpha|\log\lambda(s)|-s^{-2}\right|\leq J(\lambda(s))^{-2}o(1)+s^{-2}(\log s)^{-\frac{1}{2}}=o\left(s^{-2}\right).
\]
Therefore, we obtain
\[
\frac{4}{\alpha^2}\frac{2\beta_1}{2-\alpha}\lambda(s)^\alpha|\log\lambda(s)|=s^{-2}\left(1+o(1)\right).
\]
\end{proof}

\section{Modified energy function}
\label{sec:MEF}
We proceed with a modified version of the technique presented in Le Coz, Martel, and Rapha\"{e}l \cite{LMR} and Rapha\"{e}l and Szeftel \cite{RSEU}. Let $m>0$ be sufficiently large and define
\begin{align*}
H(s,\varepsilon)&:=\frac{1}{2}\left\|\varepsilon\right\|_{H^1}^2+b^2\left\||y|\varepsilon\right\|_2^2-\int_{\mathbb{R}^N}\left(F(P+\varepsilon)-F(P)-dF(P)(\varepsilon)\right)dy\\
&\hspace{20pt}+\frac{1}{2}\lambda^\alpha\log\lambda\int_{\mathbb{R}^N}\frac{1}{|y|^{2\sigma}}|\varepsilon|^2dy+\frac{1}{2}\lambda^\alpha\int_{\mathbb{R}^N}\frac{1}{|y|^{2\sigma}}\log|y||\varepsilon|^2dy,\\
S(s,\varepsilon)&:=\frac{1}{\lambda^m}H(s,\varepsilon).
\end{align*}

In this section, we obtain upper and lower estimates of $S$ and a lower estimate of the time derivative of $S$ for use in Section \ref{sec:bootstrap}.

\begin{lemma}[Estimates of $S$]
\label{Sesti}
For $s\in(s_*,s_1]$, 
\[
\|\varepsilon\|_{H^1}^2+b^2\left\||y|\varepsilon\right\|_2^2+O(s^{-2(K+2)})\lesssim H(s,\varepsilon)\lesssim \|\varepsilon\|_{H^1}^2+b^2\left\||y|\varepsilon\right\|_2^2
\]
hold. Moreover, 
\[
\frac{1}{\lambda^m}\left(\|\varepsilon\|_{H^1}^2+b^2\left\||y|\varepsilon\right\|_2^2+O(s^{-2(K+2)})\right)\lesssim S(s,\varepsilon)\lesssim \frac{1}{\lambda^m}\left(\|\varepsilon\|_{H^1}^2+b^2\left\||y|\varepsilon\right\|_2^2\right)
\]
hold.
\end{lemma}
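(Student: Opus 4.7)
The plan is to reduce the quadratic form defining $H$ to the coercivity inequality \eqref{Lcoer} for $L_+$ and $L_-$ at $Q$, with all deviations absorbed into the error term $O(s^{-2(K+2)})$. The second chain of inequalities follows trivially from the first upon dividing by $\lambda^m$, so the work is entirely in proving the bounds on $H(s,\varepsilon)$.

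For the upper bound, I would Taylor-expand the nonlinear contribution: write
\[
F(P+\varepsilon)-F(P)-dF(P)(\varepsilon)=\tfrac{1}{2}d^2F(P)(\varepsilon,\varepsilon)+R(P,\varepsilon),
\]
where $|R(P,\varepsilon)|\lesssim |P|^{\frac{4}{N}-1}|\varepsilon|^{3}+|\varepsilon|^{2+\frac{4}{N}}$ (or the obvious analogue in low dimension). Since $P$ is smooth and exponentially localised and $\|\varepsilon\|_{H^1}\lesssim s^{-K}$ by \eqref{bootstrap}, Gagliardo--Nirenberg/Sobolev give $\int|R(P,\varepsilon)|\,dy\lesssim\|\varepsilon\|_{H^1}^2$. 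For the singular potential contributions I would invoke Hardy's inequality together with $0<\sigma<1$ to get $\int |y|^{-2\sigma}|\varepsilon|^2\,dy\lesssim \|\varepsilon\|_{H^1}^2$, and bound the $\log|y|$ piece by splitting into $|y|\le 1$ and $|y|\ge 1$ (using the exponential decay built into $\mathcal{Y}'$ only on $P$ is irrelevant here; the weight is handled by Hardy and by $\log|y|\lesssim |y|^{\epsilon}+|y|^{-\epsilon}$). Multiplying the combined potential prefactor $\lambda^\alpha|\log\lambda|\lesssim s^{-2}$ makes these terms subordinate to $\|\varepsilon\|_{H^1}^2$.

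For the lower bound, which is the main point, I would rewrite
\[
\tfrac{1}{2}\|\varepsilon\|_{H^1}^2-\int\tfrac{1}{2}d^2F(Q)(\varepsilon,\varepsilon)\,dy=\tfrac{1}{2}\langle L_+\re\varepsilon,\re\varepsilon\rangle+\tfrac{1}{2}\langle L_-\im\varepsilon,\im\varepsilon\rangle,
\]
and apply \eqref{Lcoer} to get a lower bound by $\tfrac{\mu}{2}\|\varepsilon\|_{H^1}^2$ minus the three scalar products $(\re\varepsilon,Q)_2^2$, $(\re\varepsilon,|y|^2Q)_2^2$, $(\im\varepsilon,\rho)_2^2$. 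These are controlled as follows. The orthogonality conditions \eqref{orthocondi} give $(\re\varepsilon,|y|^2P)_2=0$ and $(\im\varepsilon,\rho)_2=0$; writing $P=Q+(P-Q)$ with $\|P-Q\|\lesssim\lambda^\alpha|\log\lambda|\lesssim s^{-2}$ in the weighted spaces from Proposition~\ref{theorem:constprof}(i), the residual errors are $\lesssim s^{-2}\|\varepsilon\|_{H^1}$, so the squared quantities are $\lesssim s^{-4}\|\varepsilon\|_{H^1}^2\lesssim s^{-4-2K}$. For $(\re\varepsilon,Q)_2$, which is not directly given by \eqref{orthocondi}, I would exploit the critical mass conservation $\|P+\varepsilon\|_2^2=\|Q\|_2^2$: this yields $2(\re\varepsilon,P)_2=\|Q\|_2^2-\|P\|_2^2-\|\varepsilon\|_2^2$, and both $\|P\|_2^2-\|Q\|_2^2$ (from Proposition~\ref{theorem:constprof}(ii)) and $\|\varepsilon\|_2^2$ are quadratically small, giving $|(\re\varepsilon,Q)_2|\lesssim \|\varepsilon\|_{H^1}^2+s^{-2(K+2)}$.

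The remaining differences are $\int(d^2F(P)-d^2F(Q))(\varepsilon,\varepsilon)\,dy$ and the Taylor remainder $\int R(P,\varepsilon)\,dy$; both are $o(\|\varepsilon\|_{H^1}^2)$ because $|d^2F(P)-d^2F(Q)|\lesssim |P-Q|^{\min(1,4/N)}$ times exponentially localised factors, and the cubic remainder is controlled as in the upper-bound step. Finally the two singular potential terms in $H$ are absorbed either into the $\tfrac{\mu}{2}\|\varepsilon\|_{H^1}^2$ (via Hardy and $\lambda^\alpha|\log\lambda|\ll 1$) or, for the $b^2\||y|\varepsilon\|_2^2$ term, it appears with a favourable sign and is simply kept. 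Collecting these estimates yields
\[
H(s,\varepsilon)\ge c\left(\|\varepsilon\|_{H^1}^2+b^2\||y|\varepsilon\|_2^2\right)+O(s^{-2(K+2)}),
\]
which after rescaling constants gives the stated lower bound. The main obstacle is the coercivity step: specifically, showing that $(\re\varepsilon,Q)_2$ — which is controlled by mass conservation rather than an orthogonality condition — contributes at the required order $s^{-2(K+2)}$, which forces one to use the quadratic structure $2(\re\varepsilon,Q)_2=-\|\varepsilon\|_2^2+O(\|P\|_2^2-\|Q\|_2^2)$ rather than a naive linear bound, and to confirm via Proposition~\ref{theorem:constprof}(ii) that $\bigl|\|P\|_2^2-\|Q\|_2^2\bigr|$ is controlled at the same order along the bootstrap regime.
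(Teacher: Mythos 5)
Your argument is correct and is essentially the standard proof that the paper itself defers to (the proof here is only the citation ``see \cite{LMR,NI}''): Taylor expansion of the nonlinearity, the coercivity estimate \eqref{Lcoer} with the degenerate directions $(\re\varepsilon,|y|^2Q)_2$ and $(\im\varepsilon,\rho)_2$ controlled by the orthogonality conditions \eqref{orthocondi} and $(\re\varepsilon,Q)_2$ controlled quadratically via mass conservation, and Hardy-type absorption of the singular potential terms using $\lambda^\alpha|\log\lambda|\ll 1$. The only cosmetic imprecision is that the residual in $(\re\varepsilon,|y|^2Q)_2$ coming from $\im P=O(b)$ is $O(s^{-1}\|\varepsilon\|_{H^1})$ rather than $O(s^{-2}\|\varepsilon\|_{H^1})$, but after squaring this is still $o(\|\varepsilon\|_{H^1}^2)$ and absorbable, so the conclusion is unaffected.
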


\begin{lemma}[Derivative of $S$ in time]
\label{Sdiff}
For $s\in(s_*,s_1]$, 
\[
\frac{d}{ds}H(s,\varepsilon(s))\gtrsim -b\left(\|\varepsilon\|_{H^1}^2+b^2\left\||y|\varepsilon\right\|_2^2\right)+O(s^{-2(K+2)})
\]
holds. Moreover,
\[
\frac{d}{ds}S(s,\varepsilon(s))\gtrsim \frac{b}{\lambda^m}\left(\|\varepsilon\|_{H^1}^2+b^2\left\||y|\varepsilon\right\|_2^2+O(s^{-(2K+3)})\right)
\]
holds for $s\in(s_*,s_1]$.
\end{lemma}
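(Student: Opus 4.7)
The plan is to differentiate $H(s,\varepsilon(s))$ by the chain rule, substitute the equation satisfied by $\varepsilon$ to exploit cancellation with the variational derivative of $H$, and finally upgrade the $H$-estimate to an $S$-estimate by exploiting the favourable contribution from $\partial_s\lambda^{-m}$. The derivative splits into (i) explicit $s$-derivatives falling on the coefficients $b^2$, $\lambda^\alpha\log\lambda$, $\lambda^\alpha$ and on $P=P(s,\cdot)$, and (ii) the variational derivative of $H$ paired with $\partial_s\varepsilon$. For (ii) I use the equation for $\varepsilon$ obtained by inserting \eqref{mod} into \eqref{NLS} and subtracting the profile equation from Proposition \ref{theorem:constprof}:
\[
i\partial_s\varepsilon+\Delta\varepsilon-\varepsilon+\bigl(f(P+\varepsilon)-f(P)\bigr)-\lambda^\alpha\log\lambda\,\frac{\varepsilon}{|y|^{2\sigma}}-\lambda^\alpha\,\frac{\log|y|}{|y|^{2\sigma}}\varepsilon=\Psi+\Mod\cdot R,
\]
where $R$ is an explicit linear combination of $P,\Lambda P,|y|^2P,\Lambda\varepsilon,\ldots$. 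Because the linear self-adjoint operator on the LHS matches, up to the weight $2b^2|y|^2$, the variational derivative of $H$ at $\varepsilon=0$, taking real parts in the $\partial_s\varepsilon$-pairing cancels the principal terms of (ii); what remains in (ii) are pairings with $\Psi$, with $\Mod\cdot R$, the mismatch coming from $2b^2|y|^2\varepsilon$, and $O(\|\varepsilon\|_{H^1}^3)$ nonlinear corrections.

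After cancellation, three families of terms survive. (a) Explicit coefficient derivatives from (i): using $\partial_s b\approx-b^2$ and $\lambda^{-1}\partial_s\lambda\approx-b$ on $(s_\ast,s_1]$ (which hold by \eqref{bootstrap} and Lemma \ref{lambapprox} once $\Mod$ is controlled), these produce $-2b^3\||y|\varepsilon\|_2^2$ and, for the singular-potential pieces, terms bounded by $b\lambda^\alpha|\log\lambda|\int|y|^{-2\sigma}|\varepsilon|^2\lesssim b\cdot s^{-2}\,\|\varepsilon\|_{H^1}^2$ via the Hardy inequality $\int|y|^{-2\sigma}|\varepsilon|^2\lesssim\|\varepsilon\|_{H^1}^2$ (valid under \eqref{index1}); all such contributions are therefore bounded below by $-Cb(\|\varepsilon\|_{H^1}^2+b^2\||y|\varepsilon\|_2^2)$. (b) The $\Psi$-pairing is controlled by Cauchy--Schwarz using the bound from Proposition \ref{theorem:constprof} and the smallness of $\Mod$, producing the $O(s^{-2(K+2)})$ error. (c) The $\Mod\cdot R$ pairing and the mismatch term are handled by the expected modulation estimate $|\Mod(s)|\lesssim s^{-(K+1)}$ available on $(s_\ast,s_1]$, again contributing $O(s^{-2(K+2)})$. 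Collecting these yields the first inequality.

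For the second inequality I differentiate $S=\lambda^{-m}H$ and use $\lambda^{-1}\partial_s\lambda=-b+O(|\Mod|)$ to get $\frac{d}{ds}S=\lambda^{-m}\frac{dH}{ds}+\frac{mb}{\lambda^m}H+O(|\Mod|\lambda^{-m}H)$. By Lemma \ref{Sesti}, $H\gtrsim\|\varepsilon\|_{H^1}^2+b^2\||y|\varepsilon\|_2^2+O(s^{-2(K+2)})$, so for $m$ sufficiently large the positive $\frac{mb}{\lambda^m}H$ absorbs the negative $-\frac{Cb}{\lambda^m}(\|\varepsilon\|_{H^1}^2+b^2\||y|\varepsilon\|_2^2)$ from the first estimate; the leftover error $\lambda^{-m}O(s^{-2(K+2)})$ is rewritten as $\frac{b}{\lambda^m}O(s^{-(2K+3)})$ using $b\approx s^{-1}$, giving the claimed form. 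The main obstacle is step (a): arranging the coefficient-derivative contributions so that their signs all align or become absorbable, which requires $b>0$ on $(s_\ast,s_1]$, $\beta_1>0$, and the gain $\lambda^\alpha|\log\lambda|\lesssim s^{-2}$ from Lemma \ref{lambapprox} to overcome the Hardy loss on the singular-potential terms.
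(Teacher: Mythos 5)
Your proposal follows the same modified-energy argument that the paper relies on (its proof is deferred to \cite{LMR,NI}): differentiate $H$, cancel the principal terms via the equation for $\varepsilon$, control the remainders with the modulation and bootstrap estimates together with Hardy's inequality, and absorb the negative contributions into the favourable term $\frac{mb}{\lambda^m}H$ for $m$ large. The only caveat is bookkeeping: for the $\Psi$- and $\Mod\cdot R$-pairings to land in $O(s^{-2(K+2)})$ against $\|\varepsilon\|_{H^1}\lesssim s^{-K}$ you need the modulation estimate at strength $|\Mod(s)|\lesssim s^{-(K+2)}$ rather than the $s^{-(K+1)}$ you quote, which is what the cited arguments in fact provide.
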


For the proofs, see \cite{LMR,NI}.

\section{Bootstrap}
\label{sec:bootstrap}
In this section, we use the estimates obtained in Section \ref{sec:MEF} and the bootstrap to establish the estimates of the parameters. Namely, we confirm \eqref{bootstrap} on $[s_0,s_1]$.

\begin{lemma}[Re-estimation]
\label{rebootstrap}
For $s\in(s_*,s_1]$, 
\begin{align}
\label{reepsiesti}
\left\|\varepsilon(s)\right\|_{H^1}^2+b(s)^2\left\||y|\varepsilon(s)\right\|_2^2&\lesssim s^{-(2K+2)},\\
\label{reesti}
\left|\frac{s^{-1}}{J(\lambda(s))^{-1}}-1\right|+\left|\frac{b(s)}{b_{\mathrm{app}}(s)}-1\right|&\lesssim (\log s)^{-1}
\end{align}
holds.
\end{lemma}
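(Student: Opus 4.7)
The plan is to improve the two bounds in \eqref{bootstrap} separately: \eqref{reepsiesti} follows from the modified energy $S$ of Section \ref{sec:MEF}, while \eqref{reesti} follows from energy conservation combined with the expansion \eqref{Eesti} and the $\mathcal{F}$--$J$ comparison in Lemma \ref{paraini}.

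For \eqref{reepsiesti}, the choice of initial datum $u(t_1) = P_{\lambda_1,b_1,0}$ forces $\varepsilon(s_1) = 0$ by the uniqueness part of Lemma \ref{decomposition}, hence $S(s_1,\varepsilon(s_1)) = 0$. Integrating the lower bound of Lemma \ref{Sdiff} backward from $s_1$ to $s \in (s_*, s_1]$ yields
\[
S(s,\varepsilon(s)) \;\leq\; C\int_{s}^{s_1}\frac{b(\tau)}{\lambda(\tau)^{m}}\,\tau^{-(2K+3)}\,d\tau,
\]
and the coercivity from Lemma \ref{Sesti} gives $\|\varepsilon(s)\|_{H^1}^{2} + b(s)^{2}\||y|\varepsilon(s)\|_{2}^{2} \lesssim \lambda(s)^{m}\,S(s,\varepsilon(s)) + s^{-(2K+4)}$. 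Substituting $\lambda(\tau) \approx \tau^{-2/\alpha}(\log\tau)^{-1/\alpha}$ and $b(\tau)\approx\tau^{-1}$ from Lemma \ref{lambapprox}, the integrand becomes, up to constants, $\tau^{2m/\alpha - 2K - 4}(\log\tau)^{m/\alpha}$; if $m$ is taken in $[0, \alpha(2K+3)/2)$, this is integrable and dominated at $\tau = s$, and the prefactor $\lambda(s)^{m}$ collapses the whole expression to $\lesssim s^{-(2K+3)}$, strictly better than the bootstrap bound $s^{-2K}$.

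For \eqref{reesti}, I would first project the equation for $\varepsilon$ against $i\Lambda P$, $|y|^{2} P$, $i\rho$ and invoke the orthogonalities \eqref{orthocondi} to obtain $|\Mod(s)| \lesssim \|\varepsilon(s)\|_{H^1} + \text{negligible terms}$, hence $|\Mod(s)| \lesssim s^{-(K+1)}$ by \eqref{reepsiesti}. Energy conservation $E(u) = E_0$, combined with \eqref{Eesti} and the standard quadratic control $E(u) = E(P_{\lambda,b,\gamma}) + O(\lambda^{-2}\|\varepsilon\|_{H^1}^{2})$, then yields $b^2 = -\tfrac{2\beta_1}{2-\alpha}\lambda^\alpha\log\lambda + \beta'_1\lambda^\alpha + C_0\lambda^2 + \text{(small)}$, that is, $b = \lambda^{\alpha/2}\sqrt{-\tfrac{2\beta_1}{2-\alpha}\log\lambda + \beta'_1 + C_0\lambda^{2-\alpha}}\,(1+o(1))$. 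Substituting into the modulation identity $\lambda^{-1}\partial_s\lambda = -b + O(|\Mod|)$ and differentiating the definition of $\mathcal{F}$ from Lemma \ref{paraini} produces $\tfrac{d}{ds}\mathcal{F}(\lambda(s)) = 1 + \mathrm{error}(s)$ with a summable error. Integrating from $s$ to $s_1$ and using $\mathcal{F}(\lambda_1) = s_1$ gives $|\mathcal{F}(\lambda(s)) - s| \ll 1$, and the comparison $|\mathcal{F}(\lambda) - J(\lambda)| \lesssim \lambda^{-\alpha/4} + \lambda^{2-3\alpha/2}$ from Lemma \ref{paraini}, together with $\lambda(s)^{-\alpha/4} \approx s^{1/2}(\log s)^{1/4}$, converts this to $|s^{-1}/J(\lambda(s))^{-1} - 1| \lesssim s^{-1/2}(\log s)^{1/4}$, which is $\ll (\log s)^{-1}$. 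Feeding this back into the expression for $b^{2}$ yields the companion estimate on $b$.

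The main obstacle is the calibration of $m$ in the definition of $S$: $m$ must be taken strictly less than $\alpha(2K+3)/2$ so that the integrand $\tau^{2m/\alpha - 2K - 4}(\log\tau)^{m/\alpha}$ is dominated near $\tau = s$ rather than $\tau = s_1$; otherwise the prefactor $\lambda(s)^m$ cannot absorb the integral and an uncontrolled amplification factor of the form $(s_1/s)^{2m/\alpha}$ emerges. This forces $K$ to be chosen first (large) and $m$ only afterward, consistent with the ``sufficiently large'' assumptions in Proposition \ref{theorem:constprof} and Section \ref{sec:MEF}. A secondary but critical subtlety is that the modulation bound must be upgraded from the bootstrap-level $s^{-K}$ to $s^{-(K+1)}$ using the freshly proved \eqref{reepsiesti}; without this gain, the integrated error in the ODE for $\mathcal{F}(\lambda(s))$ would exceed the target $(\log s)^{-1}$.
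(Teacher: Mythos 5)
Your proposal follows essentially the same route as the paper: \eqref{reepsiesti} by integrating the monotonicity of $S$ backward from $\varepsilon(s_1)=0$ (with the same calibration issue between $m$ and $K$), and \eqref{reesti} by combining the energy identity \eqref{Eesti} with the computation of $\mathcal{F}'(s)-1$, the integration from $s_1$ where $\mathcal{F}(\lambda_1)=s_1$, and the $\mathcal{F}$--$J$ comparison of Lemma \ref{paraini}. The only deviation is minor: the paper controls $E(P_{\lambda,b,\gamma})-E_0$ by integrating $\frac{d}{ds}E(P_{\lambda,b,\gamma})$ via Proposition \ref{theorem:constprof}(ii) rather than by conservation of $E(u)$ plus a Taylor expansion, and your claimed quadratic control $E(u)-E(P_{\lambda,b,\gamma})=O\left(\lambda^{-2}\|\varepsilon\|_{H^1}^2\right)$ is not actually available (the linear term in $\varepsilon$ does not vanish under the orthogonality conditions \eqref{orthocondi}), though the linear bound $O\left(\lambda^{-2}\|\varepsilon\|_{H^1}\right)$ that does hold is already sufficient once \eqref{reepsiesti} is established.
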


\begin{proof}
We can prove $(\ref{reepsiesti})$ as in \cite{LMR}.

Next, since
\[
\left|E(P_{\lambda,b,\gamma}(s))-E_0\right|\leq\left|\int_{s_1}^s\left.\frac{d}{ds}\right|_{s=\tau}E(P_{\lambda,b,\gamma}(s))d\tau\right|\leq\int_s^{s_1}\tau^{-(K+2)+\frac{4}{\alpha}}(\log s)^\frac{2}{\alpha}d\tau\lesssim s^{-(K+1)+\frac{5}{\alpha}},
\]
we have
\begin{align}
&\left|b^2+\frac{2\beta_1}{2-\alpha}\lambda^\alpha\log\lambda-\beta'_1\lambda^\alpha-C_0\lambda^2\right|\nonumber\\
\leq&\lambda^2\left(\left|\frac{b^2}{\lambda^2}+\frac{2\beta_1}{2-\alpha}\lambda^{\alpha-2}\log\lambda-\beta'_1\lambda^{\alpha-2}-\frac{8}{\||y|Q\|_2^2}E(P_{\lambda,b,\gamma})\right|+\frac{8}{\||y|Q\|_2^2}\left|E(P_{\lambda,b,\gamma})-E_0\right|\right)\nonumber\\
\label{bEesti}
\lesssim& s^{-4}.
\end{align}
From the definition of $\mathcal{F}$, we have
\[
\left|\mathcal{F}'(s)-1\right|\lesssim s^{-2},
\]
where $\mathcal{F}(s):=\mathcal{F}(\lambda(s))$. Indeed, since
\begin{align*}
\left|\mathcal{F}'(s)-1\right|&=\frac{1}{\lambda^{\frac{\alpha}{2}}\sqrt{-\frac{2\beta_1}{2-\alpha}\log\lambda+\beta'_1+C_0\lambda^{2-\alpha}}}\left|\frac{1}{\lambda}\frac{\partial\lambda}{\partial s}+b\right|\\
&\hspace{20pt}+\frac{\left|b^2+\frac{2\beta_1}{2-\alpha}\lambda^\alpha\log\lambda-\beta'_1\lambda^\alpha-C_0\lambda^2\right|}{\lambda^{\frac{\alpha}{2}}\sqrt{-\frac{2\beta_1}{2-\alpha}\log\lambda+\beta'_1+C_0\lambda^{2-\alpha}}\left(b+\lambda^{\frac{\alpha}{2}}\sqrt{-\frac{2\beta_1}{2-\alpha}\log\lambda+\beta'_1+C_0\lambda^{2-\alpha}}\right)},
\end{align*}
we obtain
\[
\left|\mathcal{F}'(s)-1\right|\lesssim s^{-(K+1)}+s^{-2}.
\]
Therefore,
\[
\left|s-\mathcal{F}(\lambda(s))\right|\lesssim s^{-1}
\]
holds since $\mathcal{F}(\lambda(s_1))=s_1$. Accordingly, since
\[
\left|s-J(\lambda(s))\right|\leq\left|s-\mathcal{F}(\lambda(s))\right|+\left|J(\lambda(s))-\mathcal{F}(\lambda(s))\right|\lesssim s^\frac{1}{2}(\log s)^\frac{1}{4}+s^{3-\frac{4}{\alpha}}(\log s)^{\frac{3}{2}-\frac{2}{\alpha}},
\]
we obtain
\[
\left|\frac{s^{-1}}{J(\lambda(s))^{-1}}-1\right|\lesssim s^{-\frac{1}{2}}(\log s)^\frac{1}{4}+s^{2-\frac{4}{\alpha}}(\log s)^{\frac{3}{2}-\frac{2}{\alpha}}.
\]

Finally, from \eqref{bEesti}, we have
\begin{align*}
&\left|b(s)^2-{b_{\app}(s)}^2\right|\\
\lesssim&s^{-4}+\lambda(s)^2+\left|\frac{4}{\alpha^2}\left(-\frac{2\beta_1}{2-\alpha}\lambda(s)^\alpha\log\lambda(s)+\beta'_1\lambda(s)^\alpha\right)-J(\lambda(s))^{-2}\right|\\
&\hspace{20pt}+\left|J(\lambda(s))^{-2}-J(\lambda_{\app}(s))^{-2}\right|+\left|\frac{4}{\alpha^2}\left(-\frac{2\beta_1}{2-\alpha}\lambda_{\app}(s)^\alpha\log\lambda_{\app}(s)+\beta'_1\lambda_{\app}(s)^\alpha\right)-J(\lambda_{\app}(s))^{-2}\right|\\
\lesssim&s^{-4}+s^{-\frac{4}{\alpha}}(\log s)^{-\frac{2}{\alpha}}+s^{-2}(\log s)^{-1}+s^{-2}\left(s^{-\frac{1}{2}}(\log s)^\frac{1}{4}+s^{2-\frac{4}{\alpha}}(\log s)^{\frac{3}{2}-\frac{2}{\alpha}}\right)
\end{align*}
and
\[
\left|\frac{b(s)}{b_{\mathrm{app}}(s)}-1\right|\lesssim (\log s)^{-1}+s^{-\frac{1}{2}}(\log s)^\frac{1}{4}+s^{2-\frac{4}{\alpha}}(\log s)^{\frac{3}{2}-\frac{2}{\alpha}}.
\]

Consequently, we obtain \eqref{reesti}.
\end{proof}

\begin{lemma}
\label{s0s*s'}
If $s_0$ is sufficiently large, then $s_*=s'=s_0$.
\end{lemma}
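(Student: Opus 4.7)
The plan is a standard bootstrap/continuity argument combining the initial-time bounds from Lemma~\ref{paraini}, the improved estimates of Lemma~\ref{rebootstrap}, and the persistence of the decomposition in Lemma~\ref{decomposition}.

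First I would verify that \eqref{bootstrap} holds \emph{strictly} at $s=s_1$. Since $u(t_1,x)=P_{\lambda_1,b_1,0}(x)$, the uniqueness part of Lemma~\ref{decomposition} forces $\tilde{\lambda}(t_1)=\lambda_1$, $\tilde{b}(t_1)=b_1$, $\tilde{\gamma}(t_1)=0$, and hence $\varepsilon(s_1)\equiv 0$, so the first inequality in \eqref{bootstrap} is trivially strict. For the second, Lemma~\ref{paraini} gives
\[
\left|\frac{s_1^{-1}}{J(\lambda_1)^{-1}}-1\right|+\left|\frac{b_1}{b_{\app}(s_1)}-1\right|\lesssim s_1^{-\frac{1}{2}}\sqrt{|\log s_1|}+s_1^{2-\frac{4}{\alpha}}\sqrt{|\log s_1|},
\]
and since $\alpha=2-2\sigma<2$ makes the exponent $2-\frac{4}{\alpha}$ negative, both terms decay faster than $(\log s_1)^{-1/2}$; so for $s_0$ (hence $s_1$) large, the second inequality in \eqref{bootstrap} is strict at $s_1$ as well.

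Next, I would argue $s_*=s'$ by contradiction. Suppose $s_*>s'$. By continuity of $\varepsilon$, $\lambda$, $b$ in $s$, and the minimality of $s_*$, at least one inequality in \eqref{bootstrap} must be attained with equality at $s=s_*$. However, Lemma~\ref{rebootstrap} provides the strictly improved estimates
\[
\|\varepsilon(s)\|_{H^1}^2+b(s)^2\||y|\varepsilon(s)\|_2^2\lesssim s^{-(2K+2)},\qquad \left|\frac{s^{-1}}{J(\lambda(s))^{-1}}-1\right|+\left|\frac{b(s)}{b_{\app}(s)}-1\right|\lesssim(\log s)^{-1}
\]
on $(s_*,s_1]$, and these pass to $s_*$ by continuity. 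Since $Cs^{-(2K+2)}<s^{-2K}$ and $C(\log s)^{-1}<(\log s)^{-1/2}$ once $s_0$ is large in terms of the implicit constants, \eqref{bootstrap} holds with strict inequality at $s_*$, and therefore by continuity on a slightly larger interval $(s_*-\eta,s_1]$, contradicting the minimality of $s_*$.

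It remains to show $s'=s_0$, i.e.\ $\inf J_{s_1}\le s_0$. For this I would verify that the hypotheses of Lemma~\ref{decomposition} remain valid on the whole of $[s_0,s_1]$, so the decomposition cannot break down there. Because \eqref{bootstrap} now holds on all of $(s',s_1]$, Lemma~\ref{lambapprox} yields $\lambda(s)\approx s^{-2/\alpha}(\log s)^{-1/\alpha}\to 0$, so $\lambda(s)<\overline{l}$ throughout; moreover $\|\varepsilon(s)\|_{H^1}$ is small and $P(s)\to Q$ in $H^1$ as $\lambda,b\to 0$, so the closeness condition $\|\lambda^{N/2}u(\cdot,\lambda y)e^{i\gamma}-Q\|_{H^1}<\delta$ holds on $[s_0,s_1]$. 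Lemma~\ref{decomposition} therefore extends the decomposition past $s_0$, forcing $\inf J_{s_1}\le s_0$ and hence $s'=s_0$. The main obstacle is purely bookkeeping: one must check that the constants in Lemma~\ref{rebootstrap} are independent of $s_*$ and $t_1$, so that the gap between the improved estimates $(s^{-(2K+2)},(\log s)^{-1})$ and the bootstrap thresholds $(s^{-2K},(\log s)^{-1/2})$ is genuinely absorbed by taking $s_0$ large.
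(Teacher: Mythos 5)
Your proposal is correct and follows essentially the same route as the paper, whose proof simply invokes Lemma \ref{rebootstrap} together with the definitions of $s_*$ and $s'$ (deferring details to the reference \cite{N}): verify \eqref{bootstrap} strictly at $s_1$ via Lemma \ref{paraini} and $\varepsilon(s_1)=0$, close the bootstrap on $(s_*,s_1]$ by the strict improvement $s^{-(2K+2)}\ll s^{-2K}$ and $(\log s)^{-1}\ll(\log s)^{-1/2}$, and use the persistence of the hypotheses of Lemma \ref{decomposition} to conclude $\inf J_{s_1}\le s_0$. Your write-up is a faithful expansion of exactly this standard continuity argument.
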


\begin{proof}
This result is proven from Lemma \ref{rebootstrap} and the definitions of $s_*$ and $s'$. See \cite{N} for details of the proof.
\end{proof}

\section{Conversion of estimates}
\label{sec:convesti}
In this section, we rewrite the uniform estimates obtained for the time variable $s$ in Lemma \ref{rebootstrap} into uniform estimates for the time variable $t$.

\begin{lemma}[Interval]
\label{interval}
If $s_0$ is sufficiently large, then there is $t_0<0$ that is sufficiently close to $0$ such that for $t_1\in(t_0,0)$, 
\begin{align*}
[t_0,t_1]\subset {s_{t_1}}^{-1}([s_0,s_1]),\quad \left|t_{\app}(s_{t_1}(t))-t\right|&\lesssim s_{t_1}(t)^{-\frac{4-\alpha}{\alpha}}(\log s_{t_1}(t))^{-\frac{2}{\alpha}-1},\\
\left|t_{\app}(s_{t_1}(t))\right|&\approx s_{t_1}(t)^{-\frac{4-\alpha}{\alpha}}(\log s_{t_1}(t))^{-\frac{2}{\alpha}}\quad (t\in [t_0,t_1])
\end{align*}
holds.
\end{lemma}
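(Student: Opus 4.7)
The plan is to reduce everything to asymptotics of $\lambda_{\app}$ coming from Lemma \ref{Jprop} and the closeness $\lambda \approx \lambda_{\app}$ coming from the bootstrap estimate \eqref{reesti}. Using the defining relation $dt/ds = \lambda(s)^2$ together with $t_1 = t_{\app}(s_1)$, one obtains by change of variables the key identity
\[
t_{\app}(s_{t_1}(t)) - t = \int_{s_{t_1}(t)}^{s_1}\bigl(\lambda(\sigma)^2 - \lambda_{\app}(\sigma)^2\bigr)\, d\sigma,
\]
which reduces the second estimate of the lemma to an integrated control of $\lambda^2 - \lambda_{\app}^2$ on $[s_0,s_1]$.

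Next I would control $\lambda - \lambda_{\app}$ as follows. The bootstrap bound \eqref{reesti} gives $|J(\lambda(s)) - s| \lesssim s(\log s)^{-1}$, and since $s = J(\lambda_{\app}(s))$, the mean value theorem applied with $|J'(\mu)| \approx \mu^{-\alpha/2 - 1}|\log\mu|^{-1/2}$ (read directly off the integrand defining $J$) at the scale $\mu \approx \lambda_{\app}(s) \approx s^{-2/\alpha}(\log s)^{-1/\alpha}$ yields
\[
|\lambda(s) - \lambda_{\app}(s)| \lesssim \lambda_{\app}(s)(\log s)^{-1},
\]
and hence $|\lambda(\sigma)^2 - \lambda_{\app}(\sigma)^2| \lesssim \sigma^{-4/\alpha}(\log\sigma)^{-2/\alpha - 1}$. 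The third claim $|t_{\app}(s)| \approx s^{-(4-\alpha)/\alpha}(\log s)^{-2/\alpha}$ then follows from the direct asymptotic $\int_s^\infty \mu^{-4/\alpha}(\log\mu)^{-2/\alpha} d\mu \approx \frac{\alpha}{4-\alpha} s^{-(4-\alpha)/\alpha}(\log s)^{-2/\alpha}$, which is valid because $\alpha = 2 - 2\sigma < 2 < 4$ (one integration by parts or L'Hopital suffices). Integrating the preceding bound on $|\lambda^2 - \lambda_{\app}^2|$ from $s_{t_1}(t)$ to infinity in the same fashion produces the claimed estimate
\[
|t_{\app}(s_{t_1}(t)) - t| \lesssim s_{t_1}(t)^{-(4-\alpha)/\alpha}(\log s_{t_1}(t))^{-2/\alpha - 1}.
\]

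For the inclusion $[t_0,t_1] \subset s_{t_1}^{-1}([s_0,s_1])$, Lemma \ref{s0s*s'} gives $s_{t_1}^{-1}([s_0,s_1]) = [t_{t_1}(s_0),t_1]$, so I need only ensure $t_0 \geq t_{t_1}(s_0)$ uniformly in $t_1 \in (t_0,0)$. Applying the key identity at $s = s_0$ gives $t_{t_1}(s_0) = t_{\app}(s_0) + o(|t_{\app}(s_0)|)$ as $s_0 \to \infty$, independently of $t_1$, so choosing $t_0 := \tfrac{1}{2} t_{\app}(s_0)$ works once $s_0$ is large enough. The most delicate point will be tracking the precise logarithmic exponents through the $J'$ asymptotics: converting the $(\log s)^{-1}$ control on $J(\lambda) - s$ given by \eqref{reesti} into a $(\log s)^{-1}$ control on $\lambda - \lambda_{\app}$, and then into the $(\log s)^{-2/\alpha - 1}$ factor in the final $t$-integral, is where the computation is tightest and where I expect the main obstacle; the rest is elementary asymptotic analysis of the integral defining $t_{\app}$.
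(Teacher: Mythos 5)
Your proposal is correct and follows essentially the same route as the paper: the same key identity $t_{\app}(s)-t_{t_1}(s)=\int_s^{s_1}(\lambda_{t_1}^2-\lambda_{\app}^2)\,d\tau$, the same mean value theorem argument on $J$ (using $|1/J'(\mu)|\approx\mu^{\alpha/2+1}|\log\mu|^{1/2}$ at the scale $\mu\approx\lambda_{\app}$ together with the bootstrap bound \eqref{reesti}) to get $|\lambda-\lambda_{\app}|\lesssim\lambda_{\app}(\log s)^{-1}$, the same integral asymptotics for $t_{\app}$, and the same appeal to Lemma \ref{s0s*s'} for the inclusion.
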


\begin{proof}
Since $t_{t_1}(s_1)=t_1=t_{\app}(s_1)$, we have
\begin{align*}
t_{\app}(s)-t_{t_1}(s)&=t_{t_1}(s_1)-t_{t_1}(s)-\left(t_{\app}(s_1)-t_{\app}(s)\right)\\
&=\int_s^{s_1}\lambda_{\mathrm{app}}(\tau)\left(\lambda_{t_1}(\tau)-\lambda_{\mathrm{app}}(\tau)\right)\left(\frac{\lambda_{t_1}(\tau)}{\lambda_{\mathrm{app}}(\tau)}+1\right)d\tau.
\end{align*}
Since $J^{-1}$ is $C^1$ function on $J((0,\lambda_0))$,
\[
\left|\lambda_{t_1}(\tau)-\lambda_{\mathrm{app}}(\tau)\right|=\left|\frac{\tau}{J'(J^{-1}\left(J(\lambda_{t_1}(\tau))+\xi\left(J(\lambda_{\app}(\tau))-J(\lambda_{t_1}(\tau))\right)\right))}\right|\left|\frac{\tau^{-1}}{J(\lambda_{t_1}(\tau))^{-1}}-1\right|
\]
for some $\xi\in[0,1]$. Since
\[
\left|\frac{1}{J'(\lambda)}\right|=\left|\lambda^{\frac{\alpha}{2}+1}\sqrt{-\frac{2\beta_1}{(2-\alpha)^2}+\frac{2\beta_2}{2-\alpha}-\frac{2\beta_1}{2-\alpha}\log\lambda}\right|\lesssim \lambda^{\frac{\alpha}{2}+1}\sqrt{|\log\lambda|}
\]
and
\[
C_1\tau\leq J(\lambda_{t_1}(\tau))+\xi\left(J(\lambda_{\app}(\tau))-J(\lambda_{t_1}(\tau))\right)\leq C_2\tau,
\]
we obtain
\[
J^{-1}\left(C_1\tau\right)\leq J^{-1}\left(J(\lambda_{t_1}(\tau))+\xi\left(J(\lambda_{\app}(\tau))-J(\lambda_{t_1}(\tau))\right)\right)\leq J^{-1}\left(C_1\tau\right).
\]
Moreover, since
\[
J^{-1}(C\tau)=\lambda_{\app}(C\tau)\approx \left(C\tau\right)^{-\frac{2}{\alpha}}(\log (C\tau))^{-\frac{1}{\alpha}}\approx \tau^{-\frac{2}{\alpha}}(\log \tau)^{-\frac{1}{\alpha}}\approx \lambda_{\app}(\tau)
\]
from Lemma \ref{Jprop}, we obtain
\[
\left|\frac{\tau}{J'(J^{-1}\left(J(\lambda_{t_1}(\tau))+\xi\left(J(\lambda_{\app}(\tau))-J(\lambda_{t_1}(\tau))\right)\right))}\right|\lesssim \tau \lambda_{\app}(\tau)^{\frac{\alpha}{2}+1}\sqrt{|\log\lambda_{\app}(\tau)|}\lesssim \lambda_{\app}(\tau).
\]
Therefore, we obtain
\[
\left|t_{\app}(s)-t_{t_1}(s)\right|\lesssim s^{-\frac{4-\alpha}{\alpha}}(\log s)^{-\frac{2}{\alpha}-1}.
\]
Similarly, we obtain
\[
\left|t_{\app}(s)\right|\approx s^{-\frac{4-\alpha}{\alpha}}(\log s)^{-\frac{2}{\alpha}}.
\]
Moreover, there exists $t_0$ from Lemma \ref{s0s*s'}.
\end{proof}

\begin{corollary}
\label{st}
\[
s_{t_1}(t)\approx |t|^{-\frac{\alpha}{4-\alpha}}|\log|t||^{-\frac{2}{4-\alpha}}
\]
\end{corollary}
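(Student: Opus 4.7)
The plan is to invert the asymptotic relation between $t$ and $s$ supplied by Lemma \ref{interval}. Writing $s = s_{t_1}(t)$, the two estimates in Lemma \ref{interval} together give
\[
|t| = |t_{\app}(s)| + O\!\left(s^{-\frac{4-\alpha}{\alpha}}(\log s)^{-\frac{2}{\alpha}-1}\right) \approx s^{-\frac{4-\alpha}{\alpha}}(\log s)^{-\frac{2}{\alpha}},
\]
because the error bound is smaller than the main term by a factor $(\log s)^{-1}$. This is the starting relation I want to solve for $s$.

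Next, I would take logarithms of $|t|\approx s^{-\frac{4-\alpha}{\alpha}}(\log s)^{-\frac{2}{\alpha}}$ to obtain
\[
|\log|t|| = \frac{4-\alpha}{\alpha}\log s + \frac{2}{\alpha}\log\log s + O(1) \approx \log s,
\]
for $s$ sufficiently large (equivalently $|t|$ sufficiently small, cf.\ Lemma \ref{interval}). Hence $\log s \approx \frac{\alpha}{4-\alpha}|\log|t||$, so $(\log s)^{-\frac{2}{\alpha}} \approx |\log|t||^{-\frac{2}{\alpha}}$ up to a multiplicative constant.

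Substituting this back into $|t| \approx s^{-\frac{4-\alpha}{\alpha}}(\log s)^{-\frac{2}{\alpha}}$ gives
\[
s^{\frac{4-\alpha}{\alpha}} \approx |t|^{-1} |\log|t||^{-\frac{2}{\alpha}},
\]
and raising to the power $\frac{\alpha}{4-\alpha}$ yields the claim
\[
s_{t_1}(t) \approx |t|^{-\frac{\alpha}{4-\alpha}}|\log|t||^{-\frac{2}{4-\alpha}}.
\]

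There is really no obstacle here beyond keeping track of constants and checking that $s$ ranges over values large enough that $\log\log s$ is absorbed into $O(1)$; this is ensured by taking $s_0$ (and hence $|t_0|$) as in Lemma \ref{s0s*s'} and Lemma \ref{interval}. The only mildly subtle point is verifying that the error term $O(s^{-\frac{4-\alpha}{\alpha}}(\log s)^{-\frac{2}{\alpha}-1})$ in the relation between $t$ and $t_{\app}(s)$ is indeed negligible compared with $t_{\app}(s)$ itself; this is immediate from the extra $(\log s)^{-1}$ factor.
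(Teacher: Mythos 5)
Your proposal is correct, and it starts exactly where the paper does: both proofs combine the two estimates of Lemma \ref{interval} to absorb the error term (thanks to the extra factor $(\log s_{t_1}(t))^{-1}$) and arrive at the relation $|t|\approx s_{t_1}(t)^{-\frac{4-\alpha}{\alpha}}(\log s_{t_1}(t))^{-\frac{2}{\alpha}}$. The only divergence is in how this relation is inverted. The paper rewrites it as a bound of the form $C_1|t|^{-c}\leq x\log x\leq C_2|t|^{-c}$ with $x$ a power of $s_{t_1}(t)$, and then inverts via the Lambert function $W_0$, using the identity $e^{W(z)}=z/W(z)$ and the asymptotic $W_0(z)\approx\log z$. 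You instead take logarithms directly, observe that $|\log|t||=\frac{4-\alpha}{\alpha}\log s_{t_1}(t)+\frac{2}{\alpha}\log\log s_{t_1}(t)+O(1)\approx\log s_{t_1}(t)$, and substitute back --- a one-step bootstrap of the logarithm. The two routes are equivalent in substance (the Lambert-function manipulation is precisely a packaged form of this log-bootstrap), but yours is more elementary and self-contained in that it does not need Corollary \ref{Wprop} or any properties of $W_0$; the paper's version has the mild advantage of reusing machinery already introduced for Lemma \ref{Jprop}. Your closing remarks about the range of $s$ and the negligibility of the error term are the right things to check, and both are indeed guaranteed by Lemmas \ref{s0s*s'} and \ref{interval}.
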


\begin{proof}
From Lemma \ref{interval},
\[
\left|t_{\app}(s_{t_1}(t))\right|-C_1s_{t_1}(t)^{-\frac{4-\alpha}{\alpha}}(\log s_{t_1}(t))^{-\frac{2}{\alpha}-1}\leq|t|\leq \left|t_{\app}(s_{t_1}(t))\right|+C_2s_{t_1}(t)^{-\frac{4-\alpha}{\alpha}}(\log s_{t_1}(t))^{-\frac{2}{\alpha}-1}
\]
and
\[
\left|t_{\app}(s_{t_1}(t))\right|\approx s_{t_1}(t)^{-\frac{4-\alpha}{\alpha}}(\log s_{t_1}(t))^{-\frac{2}{\alpha}}
\]
hold. Since
\[
|t|\approx s_{t_1}(t)^{-\frac{4-\alpha}{\alpha}}(\log s_{t_1}(t))^{-\frac{2}{\alpha}},
\]
we obtain
\[
C_1|t|^{-\frac{\alpha}{2}}\leq s_{t_1}(t)^{\frac{4-\alpha}{2}}\log s_{t_1}(t)^{\frac{4-\alpha}{2}}=W_0^{-1}(\log s_{t_1}(t)^{\frac{4-\alpha}{2}})\leq C_2|t|^{-\frac{\alpha}{2}}.
\]
Therefore,
\[
W_0(C_1|t|^{-\frac{\alpha}{2}})\leq \log s_{t_1}(t)^{\frac{4-\alpha}{2}}\leq W_0(C_2|t|^{-\frac{\alpha}{2}}).
\]
Moreover, since $e^{W(z)}=\frac{z}{W(z)}$,
\[
\frac{C_1|t|^{-\frac{\alpha}{2}}}{W_0(C_1|t|^{-\frac{\alpha}{2}})}\leq s_{t_1}(t)^{\frac{4-\alpha}{2}}\leq \frac{C_2|t|^{-\frac{\alpha}{2}}}{W_0(C_2|t|^{-\frac{\alpha}{2}})}.
\]
Since $W_0(z)\approx \log z$ for sufficiently large $z$, we obtain
\[
\frac{C_1|t|^{-\frac{2}{\alpha}}}{C'_1|\log|t||}\leq s_{t_1}(t)^{\frac{4-\alpha}{2}}\leq \frac{C_1|t|^{-\frac{2}{\alpha}}}{C'_2|\log|t||}.
\]
Consequently, we obtain conclusion.
\end{proof}

\begin{lemma}[Conversion of estimates]
\label{uniesti}
For $t\in[t_0,t_1]$, 
\begin{align*}
\tilde{\lambda}_{t_1}(t)&\approx|t|^\frac{2}{4-\alpha}|\log|t||^{\frac{1}{4-\alpha}},& \tilde{b}_{t_1}(t)&\approx |t|^\frac{\alpha}{4-\alpha}|\log|t||^\frac{2}{4-\alpha},\\
\|\tilde{\varepsilon}_{t_1}(t)\|_{H^1}&\lesssim |t|^\frac{\alpha K}{4-\alpha}|\log|t||^{\frac{2K}{4-\alpha}},& \||y|\tilde{\varepsilon}_{t_1}(t)\|_2&\lesssim |t|^\frac{\alpha (K-1)}{4-\alpha}|\log|t||^{\frac{2(K-1)}{4-\alpha}}
\end{align*}
\end{lemma}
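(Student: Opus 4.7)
The plan is to substitute the asymptotic $s_{t_1}(t)\approx|t|^{-\alpha/(4-\alpha)}|\log|t||^{-2/(4-\alpha)}$ from Corollary \ref{st} into the $s$-variable estimates already established in Lemma \ref{lambapprox} and Lemma \ref{rebootstrap}. The one preliminary observation that drives everything is that taking logarithms of that asymptotic gives
\[
\log s_{t_1}(t)=-\tfrac{\alpha}{4-\alpha}\log|t|-\tfrac{2}{4-\alpha}\log|\log|t||+O(1),
\]
so for $t$ close to $0$ we have $\log s_{t_1}(t)\approx|\log|t||$. Consequently any power $(\log s)^{c}$ converts to $|\log|t||^{c}$ up to multiplicative constants, absorbing all iterated-log corrections into the $\approx$ notation.

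First I would handle $\tilde{\lambda}_{t_1}$. From \eqref{lambapp} in Lemma \ref{Jprop} we have $\lambda(s)\approx s^{-2/\alpha}(\log s)^{-1/\alpha}$, so substituting $s=s_{t_1}(t)$ and using the comparability $\log s\approx|\log|t||$ gives
\[
\tilde{\lambda}_{t_1}(t)\approx|t|^{2/(4-\alpha)}\,|\log|t||^{\frac{4}{\alpha(4-\alpha)}-\frac{1}{\alpha}}=|t|^{2/(4-\alpha)}\,|\log|t||^{1/(4-\alpha)},
\]
using the identity $\frac{4}{\alpha(4-\alpha)}-\frac{1}{\alpha}=\frac{1}{4-\alpha}$. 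For $\tilde{b}_{t_1}$, Lemma \ref{lambapprox} gives $b(s)\approx s^{-1}$, which upon substitution immediately yields $\tilde{b}_{t_1}(t)\approx|t|^{\alpha/(4-\alpha)}|\log|t||^{2/(4-\alpha)}$.

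Next I would handle the remainder. The re-estimation \eqref{reepsiesti} from Lemma \ref{rebootstrap} gives $\|\varepsilon(s)\|_{H^1}^2+b(s)^2\||y|\varepsilon(s)\|_2^2\lesssim s^{-(2K+2)}$, whence $\|\varepsilon(s)\|_{H^1}\lesssim s^{-(K+1)}$ and, dividing by $b(s)\approx s^{-1}$, $\||y|\varepsilon(s)\|_2\lesssim s^{-K}$. Substituting $s=s_{t_1}(t)$ and using Corollary \ref{st} yields
\[
\|\tilde{\varepsilon}_{t_1}(t)\|_{H^1}\lesssim|t|^{\alpha(K+1)/(4-\alpha)}|\log|t||^{2(K+1)/(4-\alpha)},\qquad
\||y|\tilde{\varepsilon}_{t_1}(t)\|_2\lesssim|t|^{\alpha K/(4-\alpha)}|\log|t||^{2K/(4-\alpha)},
\]
which is stronger than the claim (one loses one factor of $s^{-1}\approx|t|^{\alpha/(4-\alpha)}|\log|t||^{2/(4-\alpha)}$), so the claimed bound holds a fortiori.

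I do not expect any genuine obstacle: the lemma is purely an algebraic conversion once Corollary \ref{st} and the comparability $\log s\approx|\log|t||$ are in hand. The only fiddly point is bookkeeping of the exponents of $|\log|t||$, in particular ensuring that the subleading $\log\log$ factors appearing in $\log s_{t_1}(t)$ genuinely get absorbed into the $\approx$ constants (which they do because they are of lower order than $\log|t|$). The algebraic identity $\frac{4}{\alpha(4-\alpha)}-\frac{1}{\alpha}=\frac{1}{4-\alpha}$ is the one spot where a computational slip would change the stated power, so I would double-check it.
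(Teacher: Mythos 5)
Your proposal is correct and matches the paper's (one-line) proof, which likewise just substitutes Corollary \ref{st} into the $s$-variable estimates of Lemmas \ref{lambapprox} and \ref{rebootstrap}; the exponent identity $\frac{4}{\alpha(4-\alpha)}-\frac{1}{\alpha}=\frac{1}{4-\alpha}$ checks out, and your $\varepsilon$-bounds are indeed one factor of $s^{-1}$ stronger than claimed. The only cosmetic point is that \eqref{lambapp} is stated for $\lambda_{\app}$ rather than $\lambda$, so you should pass through $J(\lambda(s))\approx s$ (from \eqref{reesti}) together with $\lambda_{\app}(C\tau)\approx\lambda_{\app}(\tau)$, or equivalently invert $\lambda^\alpha|\log\lambda|\approx s^{-2}$ from Lemma \ref{lambapprox}, to get $\lambda(s)\approx s^{-2/\alpha}(\log s)^{-1/\alpha}$.
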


\begin{proof}
From Lemma \ref{rebootstrap}, Lemma \ref{lambapprox}, and Corollary \ref{st}, it is proven.
\end{proof}

\section{Proof of Theorem \ref{theorem:EMBS}}
\label{sec:proof}
In this section, we complete the proof of Theorem \ref{theorem:EMBS}. See \cite{LMR,NI} for details of proof.

\begin{proof}[proof of Theorem \ref{theorem:EMBS}]
Let $(t_n)_{n\in\mathbb{N}}\subset(t_0,0)$ be a monotonically increasing sequence such that $\lim_{n\nearrow \infty}t_n=0$. For each $n\in\mathbb{N}$, $u_n$ is the solution for \eqref{NLS} with $\pm=-$ with an initial value
\begin{align*}
u_n(t_n,x):=P_{\lambda_{1,n},b_{1,n},0}(x)
\end{align*}
at $t_n$, where $b_{1,n}$ and $\lambda_{1,n}$ are given by Lemma \ref{paraini} for $t_n$.

According to Lemma \ref{decomposition} with an initial value $\tilde{\gamma}_n(t_n)=0$, there exists a decomposition
\[
u_n(t,x)=\frac{1}{\tilde{\lambda}_n(t)^{\frac{N}{2}}}\left(P+\tilde{\varepsilon}_n\right)\left(t,\frac{x}{\tilde{\lambda}_n(t)}\right)e^{-i\frac{\tilde{b}_n(t)}{4}\frac{|x|^2}{\tilde{\lambda}_n(t)^2}+i\tilde{\gamma}_n(t)}.
\]
Then $(u_n(t_0))_{n\in\mathbb{N}}$ is bounded in $\Sigma^1$. Therefore, up to a subsequence, there exists $u_\infty(t_0)\in \Sigma^1$ such that
\[
u_n(t_0)\rightharpoonup u_\infty(t_0)\quad \mathrm{in}\ \Sigma^1,\quad u_n(t_0)\rightarrow u_\infty(t_0)\quad \mathrm{in}\ L^2(\mathbb{R}^N)\quad (n\rightarrow\infty),
\]
see \cite{LMR,NI} for details.

Let $u_\infty$ be the solution for \eqref{NLS} with $\pm=+$ and an initial value $u_\infty(t_0)$, and let $T^*$ be the supremum of the maximal existence interval of $u_\infty$. Moreover, we define $T:=\min\{0,T^*\}$. Then for any $T'\in[t_0,T)$, $[t_0,T']\subset[t_0,t_n]$ if $n$ is sufficiently large. Then there exist $n_0$ and $C(T',t_0)>0$ such that 
\[
\sup_{n\geq n_0}\|u_n\|_{L^\infty([t_0,T'],\Sigma^1)}\leq C(T',t_0)
\]
holds. Therefore,
\[
u_n\rightarrow u_\infty\quad \mathrm{in}\ C\left([t_0,T'],L^2(\mathbb{R}^N)\right)\quad (n\rightarrow\infty)
\]
holds (see \cite{N}). In particular, $u_n(t)\rightharpoonup u_\infty(t)\ \mathrm{in}\ \Sigma^1$ for any $t\in [t_0,T)$. Furthermore, from the mass conservation, we have
\[
\|u_\infty(t)\|_2=\|u_\infty(t_0)\|_2=\lim_{n\rightarrow\infty}\|u_n(t_0)\|_2=\lim_{n\rightarrow\infty}\|u_n(t_n)\|_2=\lim_{n\rightarrow\infty}\|P(t_n)\|_2=\|Q\|_2.
\]

Based on weak convergence in $H^1(\mathbb{R}^N)$ and Lemma \ref{decomposition}, we decompose $u_\infty$ to
\[
u_\infty(t,x)=\frac{1}{\tilde{\lambda}_\infty(t)^{\frac{N}{2}}}\left(P+\tilde{\varepsilon}_\infty\right)\left(t,\frac{x}{\tilde{\lambda}_\infty(t)}\right)e^{-i\frac{\tilde{b}_\infty(t)}{4}\frac{|x|^2}{{\tilde{\lambda}_\infty(t)}^2}+i\tilde{\gamma}_\infty(t)},
\]
where an initial value of $\tilde{\gamma}_\infty$ is $\gamma_\infty(t_0)\in\left(|t_0|^{-1}-\pi,|t_0|^{-1}+\pi\right]\cap\tilde{\gamma}(u_\infty(t_0))$. Furthermore, for any $t\in[t_0,T)$, as $n\rightarrow\infty$, 
\[
\tilde{\lambda}_n(t)\rightarrow\tilde{\lambda}_\infty(t),\quad \tilde{b}_n(t)\rightarrow \tilde{b}_\infty(t),\quad e^{i\tilde{\gamma}_n(t)}\rightarrow e^{i\tilde{\gamma}_\infty(t)},\quad\tilde{\varepsilon}_n(t)\rightharpoonup \tilde{\varepsilon}_\infty(t)\quad \mathrm{in}\ \Sigma^1
\]
hold. Consequently, from the uniform estimate in Lemma \ref{uniesti}, as $n\rightarrow\infty$, we have
\begin{align*}
\tilde{\lambda}_\infty(t)&\approx|t|^\frac{2}{4-\alpha}|\log|t||^{\frac{1}{4-\alpha}},& \tilde{b}_\infty(t)&\approx |t|^\frac{\alpha}{4-\alpha}|\log|t||^\frac{2}{4-\alpha},\\
\|\tilde{\varepsilon}_\infty(t)\|_{H^1}&\lesssim |t|^\frac{\alpha K}{4-\alpha}|\log|t||^{\frac{2K}{4-\alpha}},& \||y|\tilde{\varepsilon}_\infty(t)\|_2&\lesssim |t|^\frac{\alpha (K-1)}{4-\alpha}|\log|t||^{\frac{2(K-1)}{4-\alpha}}
\end{align*}
Consequently, we obtain that $u$ converges to the blow-up profile in $\Sigma^1$.

Finally, we check energy of $u_\infty$. Since
\[
E\left(u_n\right)-E\left(P_{\tilde{\lambda}_n,\tilde{b}_n,\tilde{\gamma}_n}\right)=\int_0^1\left\langle E'(P_{\tilde{\lambda}_n,\tilde{b}_n,\tilde{\gamma}_n}+\tau \tilde{\varepsilon}_{\tilde{\lambda}_n,\tilde{b}_n,\tilde{\gamma}_n}),\tilde{\varepsilon}_{\tilde{\lambda}_n,\tilde{b}_n,\tilde{\gamma}_n}\right\rangle d\tau
\]
and $E'(w)=-\Delta w-|w|^\frac{4}{N}w+|x|^{-2\sigma}\log|x| w$, we have
\[
E\left(u_n\right)-E\left(P_{\tilde{\lambda}_n,\tilde{b}_n,\tilde{\gamma}_n}\right)=O\left(\frac{1}{{\tilde{\lambda}_n}^2}\|\tilde{\varepsilon}_n\|_{H^1}\right)=O\left(|t|^\frac{\alpha K-4}{4-\alpha}|\log|t||^\frac{2K-2}{4-\alpha}\right).
\]
Similarly, we have
\[
E\left(u_\infty\right)-E\left(P_{\tilde{\lambda}_\infty,\tilde{b}_\infty,\tilde{\gamma}_\infty}\right)=O\left(\frac{1}{{\tilde{\lambda}_\infty}^2}\|\tilde{\varepsilon}_\infty\|_{H^1}\right)=O\left(|t|^\frac{\alpha K-4}{4-\alpha}|\log|t||^\frac{2K-2}{4-\alpha}\right).
\]
From the continuity of $E$, we have
\[
\lim_{n\rightarrow \infty}E\left(P_{\tilde{\lambda}_n,\tilde{b}_n,\tilde{\gamma}_n}\right)=E\left(P_{\tilde{\lambda}_\infty,\tilde{b}_\infty,\tilde{\gamma}_\infty}\right)
\]
and from the conservation of energy,
\[
E\left(u_n\right)=E\left(u_n(t_n)\right)=E\left(P_{\tilde{\lambda}_{1,n},\tilde{b}_{1,n},0}\right)=E_0.
\]
Therefore, we have
\[
E\left(u_\infty\right)=E_0+o_{t\nearrow0}(1)
\]
and since $E\left(u_\infty\right)$ is constant for $t$, $E\left(u_\infty\right)=E_0$.
\end{proof}

\end{document}